\numberwithin{equation}{section}
\newcommand{\beg}{\begin{equation}}
\newcommand{\eeg}{\end{equation}}
\newcommand{\ben}{\begin{eqnarray*}}
	\newcommand{\een}{\end{eqnarray*}}
\newtheorem{thm}{Theorem}[section]
\newtheorem{lem}[thm]{Lemma}
\newtheorem{prop}[thm]{Proposition}
\numberwithin{equation}{section} 
\theoremstyle{definition}
\newtheorem{defn}[thm]{Definition}
\newtheorem{rem}[thm]{Remark}
\newtheorem{eg}[thm]{Example}
\newcommand{\HS}{\mathcal H}
\newcommand{\C}{\mathbb{C}}
\newcommand{\D}{\mathbb{D}}
\newcommand{\T}{\mathbb{T}}
\newcommand{\ft}{\mathcal F_O}
\newcommand{\G}{{\Gamma}_n}
\newcommand{\gn}{\mathbb{G}_n}
\newcommand{\E}{\mathbb E}
\newcommand{\ov}{\overline}
\begin{document}

\title[Rational dilation on $\Gamma_n$ and distinguished varieties]
{Rational dilation for operators associated with spectral interpolation and distinguished varieties}

\author[Sourav Pal]{Sourav Pal}

\address[ Sourav Pal]{Department of Mathematics, Indian Institute of Technology Bombay,
Powai, Mumbai - 400076, India. } \email{sourav@math.iitb.ac.in,
souravmaths@gmail.com}

\keywords{Symmetrized polydisc, Rational dilation, Distinguished varieties, Fundamental operator tuple, Functional model}

\subjclass[2010]{ 14M12, 47A20, 47A25, 47A56, 47B91}

\thanks{The author is supported by the Seed Grant of IIT Bombay, the CPDA of the Govt. of India and the MATRICS Award of SERB, (Award No. MTR/2019/001010) of DST, India.}

\begin{abstract}

For $n\geq 2$, the symmetrized polydisc is the following family of polynomially convex domains:
\[
\mathbb G_n=\left\{ \left(\sum_{1\leq i\leq n} z_i,\sum_{1\leq
i<j\leq n}z_iz_j,\dots,
\prod_{i=1}^n z_i \right): \,|z_i|< 1, i=1,\dots,n \right \}.
\]
A distinguished variety in $\mathbb G_n$ is an affine algebraic variety that intersects $\mathbb G_n$ and exits $\mathbb G_n$ through its distinguished boundary $b\Gamma_n$, where $\Gamma_n=\overline{\mathbb G}_n$. A commuting tuple of Hilbert space operators $(S_1,\dots,S_{n-1},P)$, for which $\Gamma_n$ is a spectral set, is called a $\Gamma_n$-contraction. To every $\Gamma_n$-contraction, there is a unique operator tuple $(A_1, \dots , A_{n-1})$ acting on the defect space $\mathcal D_P$ of $P$ such that
$S_i-S_{n-i}^*P=D_PA_iD_P$ for $i = 1, \dots , n-1$. This unique tuple is called the \textit{fundamental operator tuple} or simply the $\mathcal F_O$-\textit{tuple} of the $\Gamma_n$-contraction $(S_1, \dots , S_{n-1},P)$. Let $\mathcal K$ be the minimal unitary dilation space of the contraction $P$. We obtain the following main results in this paper.

\begin{enumerate}

\item We characterize the entire class of $\Gamma_n$-contractions $(S_1, \dots , S_{n-1},P)$ which possess normal $b\Gamma_n-$dilation (i.e. $\Gamma_n$-unitary dilation) on $\mathcal K$ with last component being the minimal unitary dilation of $P$. We show explicit construction of such dilation.

\item We show that if $\mathcal D_P$ is finite dimensional, then $(S_1, \dots , S_{n-1},P)$ possesses a normal $b\Gamma_n-$dilation on $\mathcal K$ if and only if the $\mathcal F_O$-tuples of $(S_1, \dots , S_{n-1},P)$ and $(S_1^*, \dots , S_{n-1}^*,P^*)$ define distinguished varieties in $\mathbb G_n$.

\item We prove that if $P$ is a pure contraction and if $\mathcal D_{P^*}$ is finite dimensional, then a $\Gamma_n$-contraction $\Sigma=(S_1, \dots , S_{n-1},P)$ has a normal $b\Gamma_n-$dilation if and only if it has a normal $\partial \overline{\Lambda}_{\Sigma}-$dilation, where $\Lambda_{\Sigma}$ is a distinguished variety in $\mathbb G_n$ determined by the $\mathcal F_O$-tuple of $(S_1^*, \dots , S_{n-1}^*,P^*)$ and further establish that this is equivalent to the existence of the distinguished variety $\Lambda_{\Sigma}$.

\item We find a new characterization for the distinguished varieties in the symmetrized polydisc.

\item We produce a concrete functional model for a $\Gamma_n$-contraction which dilates to a $\Gamma_n$-unitary on $\mathcal K$.

\item We find new characterizations for the points in $\mathbb G_n$ and $\Gamma_n$.

\item We obtain an operator model for a pure $\Gamma_n$-isometry and find new characterizations for the $\Gamma_n$-unitaries and $\Gamma_n$-isometries.

\end{enumerate}

\end{abstract}

\maketitle

\tableofcontents

\section{Introduction}

\vspace{0.4cm}

\noindent Throughout the paper all operators are bounded linear operators acting on complex Hilbert spaces. A contraction is an operator with norm not greater than one. Denote by $\D, \T$ the unit disc and the unit circle respectively in the complex plane $\C$. We define spectral set, complete spectral set, distinguished boundary and rational dilation in Section \ref{background}.

The aim of this article is to explore and establish a connection between rational dilation and the geometry of the distinguished varieties in the symmetrized polydisc. The \textit{symmetrized} $n$-\textit{disc} or simply the \textit{symmetrized polydisc} $\mathbb G_n$, which is defined by
\[
\mathbb G_n =\left\{ \left(\sum_{1\leq i\leq n} z_i,\sum_{1\leq
i<j\leq n}z_iz_j,\dots, \prod_{i=1}^n z_i \right): \,|z_i|< 1,
i=1,\dots,n \right \},
\]
is family of polynomially convex domains that are naturally associated with the famous Spectral interpolation problem. If $\mathcal M_n(\mathbb C)$ is the space of $n\times n$ complex matrices and if $\mathcal B_1^n$ is its spectral unit ball, then $A\in \mathcal B_1^n$ (that is, the spectral radius $r(A)<1$) if and only if $\pi_n(\sigma(A)) \in \mathbb G_n$, where $\sigma(A)$ is the spectrum of $A$ and $\pi_n:\mathbb C^n \rightarrow \mathbb C^n$ is the symmetrization map whose components are the symmetric polynomials in $n$ variables, i.e.
\[
\pi_n(z)=\left(\sum_{1\leq i\leq n} z_i,\sum_{1\leq
i<j\leq n}z_iz_j,\dots, \prod_{i=1}^n z_i \right).
\]
Note that $\mathbb G_1$ is the unit disc $\D$ and for $n\geq 2$, $\mathbb G_n$ is the symmetrization of the points in the polydisc $\D^n$, i.e. $\gn=\pi_n(\D^n)$. Here we shall consider $\gn$ for $n\geq 2$ only. Naturally a bounded domain like $\mathbb G_n$ that has complex-dimension $n$, is much easier to deal with than an unbounded $n^2$-dimensional object like $\mathcal B_1^n$. Nevertheless, the symmetrized polydisc has evloved as an object of independent inetrests in past two decades because of its rich function theory \cite{costara1, KZ, MSZ, NiPa, Pal-Roy2}, uncanonical complex geometry \cite{costara, edi-zwo, NiPfZw, N-P-T-Zw1, Pal-Roy3, Su-T-Tu} and more appealing operator theory \cite{ay-jfa, tirtha-sourav, tirtha-sourav1, BSR, sourav, sarkar, Bisai-Pal1, Bisai-Pal2} along with their fascinating interrelations, e.g. \cite{pal-shalit, spal2}. Also, an interested reader can see the references therein for a further reading.\\

The goal of rational dilation roughly speaking is to realize a
tuple of commuting operators as a compression of a tuple of commuting normal operators. If a compact set $K \subset \mathbb C^d$ is a spectral set for a commuting tuple $(T_1,\cdots,T_d)$ acting on a Hilbert space $\HS$, the rational dilation seeks the existence of a commuting tuple of normal operators $(N_1, \dots , N_d)$ acting on a Hilbert space $\widetilde{\HS} \supseteq \HS$ and having the boundary of $K$ as a spectral set such that
\[
f(T_1, \dots , T_d)=P_{\HS}\, f(N_1, \dots , N_d)|_{\HS}
\]
for all rational functions $f=p/q$, where $P_{\HS}:\widetilde{\HS} \rightarrow \HS$ is the orthogonal projection and $p, q \in \C[z_1, \dots , z_d]$ with $q$ having no zeros in $K$. The first nontrivial step in this direction was Sz.-Nagy's dilation theorem which removed much of the mystery of one variable operator theory by expressing a contraction as a compression of a unitary operator.
\begin{thm}[Sz.-Nagy, 1953]
If $T$ is a contraction acting on a Hilbert space $\mathcal H$,
then there exists a Hilbert space $\widetilde{\HS} \supseteq \mathcal
H$ and a unitary $U$ on $\widetilde{\HS}$ such that
\[
r(T)=P_{\mathcal H}r(U)|_{\mathcal H}
\]
for all $r\in Rat(\ov{\D})$, where $Rat(\ov{\D})$ is the algebra of rational functions $r(z)$ with poles off $\ov{\D}$.
\end{thm}
Moreover, such a unitary dilation is called \textit{minimal} if
$
\widetilde{\HS}=\ov{span} \{ r(T): r\in Rat(\ov{\D}) \}.
$
The theory of Sz.-Nagy and Foias \cite{nagy} tells us that any two minimal unitary dilations of a contraction are unitarily equivalent which is why often it is referred to as the minimal unitary dilation. Note that an operator is a contraction if and only if $\ov{\D}$ is a spectral set for it and a unitary is a normal operator having the boundary $\T$ as a spectral set. Since every operator is nothing but a scalar
time a contraction, Sz.-Nagy's result provides a subtle way of modelling an operator in terms of a normal operator or more precisely a scalar time a unitary.

 Later, Ando profoundly generalized Sz.-Nagy's result in \cite{ando} by exhibiting a commuting isometric dilation to a pair of commuting contractions. This makes $\ov{\D}^2$ a spectral set for any pair of commuting contractions $(T_1,T_2)$ and consequently we have von-Neumann's inequality which is known as Ando's inequality:
\[
\|f(T_1,T_2)\| \leq \sup_{(z_1,z_2)\in \ov{\D}^2}\; |f(z_1,z_2)|=\|f\|_{\infty, \ov{\D}^2},
\]
where $f=p/q$ is any rational function such that $q$ has no zeros in $\ov{\D}^2$. Also, the literature \cite{parrott} tells us that a commuting tuple of contractions $(T_1, \dots ,T_n)$ may not dilate to a tuple of commuting unitaries $(U_1, \dots , U_n)$ if $n>2$. In \cite{AM05}, Agler and M\raise.45ex\hbox{c}Carthy showed that if $(Q_1,Q_2)$ is a pair of commuting contractive matrices without having any unimodular eigenvalues, then there is an algebraic variety $V$ intersecting $\D^2$ and satisfying $V\cap \ov{\D}^2=V\cap (\D^2 \cup \T^2)$ such that Ando's inequality holds for $(Q_1,Q_2)$ on $V\cap \ov{\D}^2$. The set $V\cap \D^2$ is called a \textit{distinguished variety} in $\D^2$. This path-breaking result reduces a spectral set for $(Q_1,Q_2)$ by a dimension in the sense that $\ov{\D}^2$ is naturally a spectral set for $(Q_1,Q_2)$ and has complex dimension $2$ whereas ${V\cap \ov{\D}^2}$ has complex dimension $1$ and it is also a spectral set for $(Q_1,Q_2)$ by Ando's inequality. In past one decade the distinguished varieties in domains like the polydisc, the symmetrized polydisc and the tetrablock have been extensively studied, e.g. \cite{pal-shalit, spal01, Bh-Ku-Sau, spal1, spal2}.
\begin{defn}
Let $G\subseteq \C^n$ be a domain. A nonempty set $W\subseteq G$ is said to be a \textit{distinguished variety} in $G$ if there is a complex affine variety $W'$ in $\C^n$ such that $W=W'\cap G$ and $W'\cap \partial \ov{G} \subseteq b\ov{G}$, where $\partial \ov{G},\; b\ov{G}$ are the topological and distinguished boundary of $G$ respectively. We denote by $\partial \ov{W}$ the set $W'\cap \partial \ov{G}= W' \cap b \ov{G}$.
\end{defn}
It was shown in \cite{spal2} by the author that any distinguished variety in $\gn$ could be determined by a set of polynomials of the form $\{ \det \, (F_i^*+z_n F_{n-i}-z_iI)\,:\,1 \leq i \leq n-1 \}$, where $F_1, \dots , F_{n-1}$ are complex square matrices of same order satisfying certain conditions. This naturally leads us to the following definition.
\begin{defn}
A set of $n-1$ complex square matrices of same order $\{ F_1, \dots , F_{n-1} \}$ is said to define a distinguished variety $W=W'\cap \gn$ in the symmetrized polydisc, where $W'$ is an affine variety in $\C^n$ if
\[
W = \{ (s_1,\dots,s_{n-1},p)\in \mathbb G_n \,: \, (s_1,\dots,s_{n-1}) \in \sigma_T(F_1^*+pF_{n-1}\,,\,
F_2^*+pF_{n-2}\,,\,\dots\,, F_{n-1}^*+pF_1) \},
\]
so that $\{ \det \, (F_i^*+z_n F_{n-i}-z_iI)\,:\,1 \leq i \leq n-1 \}$ becomes a set of generators for $W'$.
\end{defn}

Let us consider an operator tuple for which the closed symmetrized polydisc is a spectral set.
\begin{defn}
A commuting tuple of Hilbert space operators $(S_1, \dots , S_{n-1},P)$ having the closed symmetrized polydisc
\[
\G =\left\{ \left(\sum_{1\leq i\leq n} z_i,\sum_{1\leq
i<j\leq n}z_iz_j,\dots,
\prod_{i=1}^n z_i \right): \,|z_i|\leq 1, i=1,\dots,n \right \}
\]
as a spectral set is called a $\G$-\textit{contraction}.
\end{defn}
It follows from the definition that if $(S_1, \dots , S_{n-1},P)$ is a $\G$-contraction then so is its adjoint $(S_1^*, \dots , S_{n-1}^*,P^*)$ and that $P$ is a contraction. It was shown in \cite{sourav6} by the author that for any $\G$-contraction $(S_1, \dots , S_{n-1},P)$, there are unique operators $A_1, \dots , A_{n-1}$ acting on the defect space $\mathcal D_P=\ov{Ran}\,(I-P^*P)^{\frac{1}{2}}$ of $P$ such that
\[
S_i-S_{n-i}^*P=(I-P^*P)^{\frac{1}{2}}A_i(I-P^*P)^{\frac{1}{2}}\,, \quad 1\leq i \leq n-1.
\]
This unique operator tuple $(A_1, \dots , A_{n-1})$, for its pivotal role in the theory of $\G$-contractions, is called the \textit{fundamental operator tuple} or simply the $\ft$-\textit{tuple} of $(S_1, \dots , S_{n-1},P)$.\\

In Theorem \ref{main-dilation-theorem}, we find a necessary and sufficient condition for $(S_1, \dots , S_{n-1},P)$ to possess a normal $b\Gamma_n-$dilation (i.e. a $\G$-unitary dilation) $(R_1, \dots , R_{n-1},U)$ such that $U$ is the minimal unitary dilation of the contraction $P$. Also, we explicitly construct such a $\G$-unitary dilation. The $\ft$-tuples of $(S_1, \dots , S_{n-1},P)$ and $(S_1^*, \dots , S_{n-1}^*,P^*)$ play central role in that construction. We show by examples in Subsection \ref{further-dilation} that a $\G$-contraction may dilate to a $\G$-unitary without satisfying the conditions of Theorem \ref{main-dilation-theorem}. The dilation issue is subtle when the defect space of $P$ is finite dimensional. Indeed, in Theorem \ref{thm:dilation-variety2} we show that if $\mathcal D_P$ is finite dimensional then $(S_1, \dots , S_{n-1},P)$ dilates to a $\G$-unitary on $\mathcal K$ if and only if the $\ft$-tuples of $(S_1, \dots , S_{n-1},P)$ and $(S_1^*, \dots , S_{n-1}^*,P^*)$ define distinguished varieties in $\gn$. The interplay between dilation and distinguished varieties becomes more interesting when $P$ is a pure contraction, i.e. ${P^*}^n \rightarrow 0$ strongly and $\mathcal D_{P^*}$ is finite dimensional. It is well-known that $L^2(\mathcal D_{P^*})$ is the minimal unitary dilation space of $P$ then. In Theorem \ref{thm:dilation-variety1}, we show that if $P$ is pure with $\dim \mathcal D_{P^*}< \infty$ and if $(B_1, \dots , B_{n-1})$ is the $\ft$-tuple of $(S_1^*, \dots , S_{n-1}^*,P^*)$, then $(S_1, \dots , S_{n-1},P)$ admits a normal $b\G$-dilation on $L^2(\mathcal D_{P^*})$ if and only if it has a normal $\partial \ov{\Lambda}-$dilation, where $\Lambda$ is a distinguished variety in $\gn$ generated by the set of polynomials $\{\det \, (B_i^*+z_nB_{n-i}-z_iI):\, 1 \leq i \leq n-1 \}$. This gives rise to a new characterization for the distinguished varieties in $\gn$ which we present in Theorem \ref{thm:DVchar-4}. Note that the author's previous work \cite{spal2} gives characterization of a distinguished variety in $\gn$ in terms of Taylor joint spectrum of $n-1$ commuting matrix pencils. The new characterization that we obtain here is simpler than the earlier one.

In Section \ref{sec:isometric-dilation-1}, we find a concrete functional model for a $\G$-contraction that dilates to a $\G$-unitary on $\mathcal K$. To do so we produce an $\G$-isometric dilation acting on the minimal isometric dilation space of $P$. An abstract operator model for a pure $\G$-isometry was described in \cite{BSR} in terms of Toeplitz operators on a vectorial Hardy space. In Theorem \ref{model1}, we make a refinement of that model. We show that if $(T_1, \dots , T_{n-1},V)$ is a pure $\G$-isometry on $\HS$ then $\HS$ can be identified with $H^2(\mathcal D_{V^*})$ and $(T_1, \dots , T_{n-1},V)$ can be identified with the Toeplitz operator tuple $(T_{\phi_1}, \dots , T_{\phi_{n-1}},T_z)$, where $\phi_i(z)=F_i^*+F_{n-i}z$ with $(F_1, \dots , F_{n-1})$ being the $\ft$-tuple of $(T_1^*, \dots , T_{n-1}^*,V^*)$. In Section \ref{scalarcase}, we find a few new characterizations for the points in $\gn$ and $\G$. The other achievement of this paper is a set of new characterizations for the $\G$-unitaries and the $\G$-isometries which we present in Section \ref{unitaries-isometries}. In Section \ref{background}, we recall some basic facts and accumulate a few useful results from the literature.

We learned that parts of the characterizations for the points in $\gn, \G$ and the same for the $\G$-unitaries and $\G$-isometries are independently found by A. Pal in \cite{a-pal}.

\vspace{0.4cm}

\section{Basic definitions and preparatory results}\label{background}

\vspace{0.4cm}

\subsection{The Taylor joint spectrum}

Let $\Lambda$ be the exterior algebra on $n$ generators
$e_1,...e_n$, with identity $e_0\equiv 1$. $\Lambda$ is the
algebra of forms in $e_1,...e_n$ with complex coefficients,
subject to the collapsing property $e_ie_j+e_je_i=0$ ($1\leq i,j
\leq n$). Let $E_i: \Lambda \rightarrow \Lambda$ denote the
creation operator, given by $E_i \xi = e_i \xi $ ($\xi \in
\Lambda, 1 \leq i \leq n$).
 If we declare $ \{ e_{i_1}... e_{i_k} : 1 \leq i_1 < ... < i_k \leq n \}$ to be an
 orthonormal basis, the exterior algebra $\Lambda$ becomes a Hilbert space,
 admitting an orthogonal decomposition $\Lambda = \oplus_{k=1} ^n \Lambda^k$
 where $\dim \Lambda ^k = {n \choose k}$. Thus, each $\xi \in \Lambda$ admits
 a unique orthogonal decomposition
 $ \xi = e_i \xi' + \xi''$, where $\xi'$ and $\xi ''$ have no $e_i$
contribution. It then follows that that  $ E_i ^{*} \xi = \xi' $,
and we have that each $E_i$ is a partial isometry, satisfying
$E_i^*E_j+E_jE_i^*=\delta_{i,j}$. Let $\mathcal X$ be a normed
space, let $\underline{T}=(T_1,...,T_n)$ be a commuting $n$-tuple
of bounded operators on $\mathcal X$ and set $\Lambda(\mathcal
X)=\mathcal X\otimes_{\mathbb{C}} \Lambda$. We define
$D_{\underline T}: \Lambda (\mathcal X) \rightarrow \Lambda
(\mathcal X)$ by

\[
D_{\underline T} = \sum_{i=1}^n T_i \otimes E_i .
\]

Then it is easy to see $D_{\underline T}^2=0$, so $Ran
D_{\underline T} \subset Ker D_{\underline T}$. The commuting
$n$-tuple is said to be \textit{non-singular} on $\mathcal X$ if
$Ran D_{\underline T}=Ker D_{\underline T}$.
\begin{defn}
The Taylor joint spectrum of ${\underline T}$ on $\mathcal X$ is
the set
\[
\sigma_T({\underline T},\mathcal X) = \{
\lambda=(\lambda_1,...,\lambda_n)\in \mathbb{C}^n : {\underline
T}-\lambda \text{ is singular} \}.
\]
\end{defn}
\begin{rem}
The decomposition $\Lambda=\oplus_{k=1}^n \Lambda^k$ gives rise to
a cochain complex $K({\underline T},\mathcal X)$, known as the
Koszul complex associated to ${\underline T}$ on $\mathcal X$, as
follows:
\[
K({\underline T},\mathcal X):0 \rightarrow \Lambda^0(\mathcal
X)\xrightarrow{D_{\underline T}^0}... \xrightarrow{D_{\underline
T}^{n-1}} \Lambda^n(\mathcal X) \rightarrow 0 ,
\]
where $D_{\underline T}^{k}$ denotes the restriction of
$D_{\underline T}$ to the subspace $\Lambda^k(\mathcal X)$. Thus,
\[
\sigma_T({\underline T},\mathcal X) = \{ \lambda\in \mathbb{C}^n :
K({\underline T}-\lambda ,\mathcal X)\text{ is not exact} \}.
\]
\end{rem}
For a further reading on Taylor joint spectrum an interested
reader is referred to Taylor's works, \cite{Taylor, Taylor1}.

\subsection{Spectral and complete spectral set}

We shall follow Arveson's terminologies about the spectral and
complete spectral sets. Let $X$ be a compact subset of $\mathbb
C^n$ and let $\mathcal R(X)$ denote the algebra of all rational
functions on $X$, that is, all quotients $p/q$ of polynomials $p,q$ from $\C[z_1, \dots , z_n]$ for which $q$ has no zeros in $X$. The norm of an element
$f$ in $\mathcal R(X)$ is defined as
$$\|f\|_{\infty, X}=\sup \{|f(\xi)|\;:\; \xi \in X  \}. $$
Also for each $k\geq 1$, let $\mathcal R_k(X)$ denote the algebra
of all $k \times k$ matrices over $\mathcal R(X)$. Obviously each
element in $\mathcal R_k(X)$ is a $k\times k$ matrix of rational
functions $F=(f_{i,j})$ and we can define a norm on $\mathcal
R_k(X)$ in the canonical way
$$ \|F\|=\sup \{ \|F(\xi)\|\;:\; \xi\in X \}, $$ thereby making
$\mathcal R_k(X)$ into a non-commutative normed algebra. Let
$\underline{T}=(T_1,\cdots,T_n)$ be an $n$-tuple of commuting
operators on a Hilbert space $\mathcal H$. The set $X$ is said to
be a \textit{spectral set} for $\underline T$ if the Taylor joint
spectrum $\sigma_T (\underline T)$ of $\underline T$ is a subset
of $X$ and
\begin{equation}\label{defn1}
\|f(\underline T)\|\leq \|f\|_{\infty, X}\,, \textup{ for every }
f\in \mathcal R(X).
\end{equation}
Here $f(\underline T)$ can be interpreted as $p(\underline
T)q(\underline T)^{-1}$ when $f=p/q$. Moreover, $X$ is said to be
a \textit{complete spectral set} if $\|F(\underline T)\|\leq
\|F\|$ for every $F$ in $\mathcal R_k(X)$, $k=1,2,\cdots$.\\

The following two results are well-known and one can find proofs of them in \cite{spal2}.

\begin{lem}\label{imp-lem:01}
Let $(T_1,\dots,T_d)$ be a commuting tuple of normal operators on a Hilbert space $\mathcal H$. Then a compact set $K \subseteq \mathbb C^d$ is a spectral set for $(T_1,\dots,T_d)$ if and only if the Taylor joint spectrum $\sigma_T((T_1,\dots,T_d)) \subseteq K$.
\end{lem}

\begin{lem}\label{imp-lem:02}
Let $(T_1,\dots,T_d)$ be a commuting tuple of operators on a Hilbert space $\mathcal H$. Then a polynomially convex set $K \subseteq \mathbb C^d$ is a spectral set for $(T_1,\dots,T_d)$ if and only if
\[
\| f(T_1,\dots,T_d) \|\leq \| f \|_{\infty, K}
\]
for all polynomials $f\in \mathbb C[z_1,\dots,z_d]$.
\end{lem}

\subsection{The distinguished boundary and rational dilation}

Let $\mathcal A(X)$ be an algebra of continuous complex-valued
functions on $X$ which separates the points of $X$. A
\textit{boundary} for $\mathcal A(X)$ is a closed subset $\Delta$ of $X$ such that every function in $\mathcal A(X)$ attains its
maximum modulus on $\Delta$. It follows from the theory of
uniform algebras that the intersection of all the boundaries of
$X$ is also a boundary for $\mathcal A(X)$ (see Theorem 9.1 of
\cite{wermer}). This smallest boundary is called the
$\check{\textup{S}}$\textit{ilov boundary} for $\mathcal A(X)$.
When $\mathcal A(X)$ is the algebra of rational functions which
are continuous on $X$, the $\check{\textup{S}}$\textit{ilov
boundary} for $\mathcal A(X)$ is called the \textit{distinguished
boundary} of $X$ and is denoted by $bX$.

A commuting $n$-tuple of operators $\underline T$ on a Hilbert
space $\mathcal H$, having $X$ as a spectral set, is said to have
a \textit{rational dilation} or \textit{normal}
$bX$-\textit{dilation} if there exists a Hilbert space $\mathcal
K$, an isometry $V:\mathcal H \rightarrow \mathcal K$ and an
$n$-tuple of commuting normal operators $\underline
N=(N_1,\cdots,N_n)$ on $\mathcal K$ with $\sigma_T(\underline
N)\subseteq bX$ such that
\begin{equation}\label{rational-dilation}
f(\underline T)=V^*f(\underline N)V, \textup{ for every } f\in
\mathcal R(X),
\end{equation}
or, in other words $f(\underline T)=P_{\mathcal H}f(\underline N)|_{\mathcal H}$ for every $f\in \mathcal R(X)$ when $\mathcal H$ is considered as a closed linear subspace of $\mathcal K$. Moreover, the dilation is called {\em minimal} if
\[
\mathcal K=\overline{\textup{span}}\{ f(\underline N) h\,:\;
h\in\mathcal H \textup{ and } f\in \mathcal R(K) \}.
\]
It obvious that if $X$ is a complete spectral set for $\underline
T$ then $X$ is a spectral set for $\underline T$. A celebrated
theorem of Arveson states that $\underline T$ has a normal
$bX$-dilation if and only if $X$ is a complete spectral set of
$\underline T$ (Theorem 1.2.2 and its corollary, \cite{sub2}).
Therefore, the success or failure of rational dilation is
equivalent to asking whether $X$ is a spectral set for $\underline
T$ implies that $X$ is a complete spectral set for $\underline
T$.

\subsection{A brief literature and preparatory results} Recall that a $\G$-contraction is a commuting tuple $(S_1, \dots , S_{n-1},P)$ that has $\G$ as a spectral set. Unitaries, isometries and co-isometries are important special classes of contractions. There are natural analogues of these classes for $\Gamma_n$-contractions.
\begin{defn}
Let $S_1,\dots,S_{n-1},P$ be commuting operators on a Hilbert space
$\mathcal H$. We say that $(S_1,\dots,S_{n-1},P)$ is
\begin{itemize}
\item [(i)] a $\Gamma_n$-\textit{unitary} if $S_1,\dots,S_{n-1},P$ are
normal operators and $\sigma_T(S_1,\dots,S_{n-1},P) \subseteq b\Gamma_n$ ; \item [(ii)] a $\Gamma_n$-\textit{isometry} if there exists a Hilbert space
$\mathcal K \supseteq \mathcal H$ and a $\Gamma_n$-unitary
$(\tilde{S_1},\dots,\tilde{S_{n-1}},\tilde{P})$ on $\mathcal K$ such that $\mathcal H$ is a common invariant subspace for
$\tilde{S_1},\dots,\tilde{S_{n-1}},\tilde{P}$ and that
$S_i=\tilde{S_i}|_{\mathcal H}$ for $i=1,\dots,n-1$ and
$\tilde{P}|_{\mathcal H}=P$; \item [(iii)] a
$\Gamma_n$-\textit{co-isometry} if $(S_1^*,\dots,S_{n-1}^*,P^*)$ is a $\Gamma_n$-isometry.
\end{itemize}
\end{defn}

\begin{defn}
A $\Gamma_n$-isometry $(S_1,\dots,S_{n-1},P)$ is said to be \textit{pure} if $P$ is a pure isometry, i.e, if ${P^*}^n \rightarrow 0$ strongly as $n \rightarrow \infty$.
\end{defn}
In \cite{sourav3}, we introduced the following $n-1$ operator pencils
$\Phi_1,\dots,\Phi_{n-1}$ to analyze the structure of a
$\Gamma_n$-contraction:
\begin{equation} \label{eq:1a}
\Phi_{i}(S_1,\dots, S_{n-1},P)
={\tilde n_i}^2(I-P^*P)+(S_i^*S_i-S_{n-i}^*S_{n-i})-\tilde n_i(S_i-S_{n-i}^*P) -\tilde n_i(S_i^*-P^*S_{n-i}).
\end{equation}
where $\tilde n_i= \binom{n}{i}$ and $S_i, P$ are commuting operators with $\|P\|\leq 1$. Note that these operator pencils are positive when $(S_1, \dots , S_{n-1},P)$ is a $\G$-contraction.
\begin{prop}[\cite{sourav3}, Proposition 2.6]\label{lem:3}
Let $(S_1,\dots,S_{n-1},P)$ be a $\Gamma_n$-contraction. Then for
$i=1,\dots,n-1,\; \Phi_i(\alpha
S_1,\dots,\alpha^{n-1}S_{n-1},\alpha^n P)\geq 0$ for all $\alpha
\in\overline{\mathbb D}$.
\end{prop}
By an application of positivity of $\Phi_1, \dots , \Phi_{n-1}$, we established in \cite{sourav6} the existence and uniqueness of the $\ft$-tuple of a $\G$-contraction.
\begin{thm}[\cite{sourav6}, Theorem 3.3]\label{existence-uniqueness} Let
$(S_1,\dots,S_{n-1},P)$ be a $\Gamma_n$-contraction on a Hilbert
space $\mathcal H$. Then there are unique operators
$A_1,\dots,A_{n-1}\in\mathcal B(\mathcal D_P)$ such that
\[
S_i-S_{n-i}^*P=D_PA_iD_P \text{ for } i=1,\dots,n-1.
\]
Moreover, for each $i$ and for all $z\in \mathbb T$, $\omega
(A_i+A_{n-i}z)\leq \tilde n_i$. $[\tilde n_i= \binom{n}{i}]$.
\end{thm}
The following result from \cite{BSR} will be frequently used in the entire paper.
\begin{lem}[\cite{BSR}, Lemma 2.7] \label{lem:BSR2}
For any $\G$-contraction $(S_1,\dots,S_{n-1},P)$, the operator tuple
$
\left(\dfrac{n-1}{n}S_1,\dfrac{n-2}{n}S_2, \dots, \dfrac{1}{n}S_{n-1} \right)
$
is a $\Gamma_{n-1}$-contraction.
\end{lem}

\vspace{0.4cm}

\section{The symmetrized polydisc, its closure and the distinguished boundary}\label{scalarcase}

\vspace{0.4cm}

\noindent In this section, we present a few new characterizations for the points in the open and closed symmetrized polydiscs. We begin with $\gn$.

\begin{thm}\label{thm:SC2}
Let $(s_1,\dots,s_{n-1},p)\in\mathbb C^n$ and let
\[
Q=\left( \dfrac{s_1-\overline{ s_{n-1}}p}{1-|p|^2}, \dfrac{s_2-\overline{ s_{n-2}}p}{1-|p|^2},\dots,
\dfrac{s_{n-1}-\overline{ s_{1}}p}{1-|p|^2} \right)\in\mathbb C^{n-1}\,,
\]
when $|p|\neq 1$. Then the following are equivalent:
\begin{enumerate}
\item $(s_1,\dots,s_{n-1},p)\in\mathbb G_n$\,;  \item $(\omega
s_1,\omega^2 s_2,\dots, \omega^{n-1}s_{n-1},\omega^n p)\in\mathbb
G_n$ for all $\omega \in\mathbb T$  \,; \item $\Phi_{i}(\alpha
s_1,\dots, \alpha^{n-1} s_{n-1},\alpha^n p)> 0$ for all
$\alpha\in\overline{\mathbb D}$, $i=1,2, \dots, n-1$ and
$Q\in\mathbb G_{n-1}$ \,; \item $\left| {\tilde n_i \alpha^n
p-\alpha^{n-i} s_{n-i}} \right|< |{\tilde n_i-\alpha^i s_i}|$, for
all $\alpha\in\overline{\mathbb D}$ and for $i=1,\dots,n-1$. and
$Q\in\mathbb G_{n-1}$ \,; \item
$|s_i-\bar{s_{n-i}}p|+|s_{n-i}-\bar{s_i}p|< n(1-|p|^2)$,
$Q\in\mathbb G_{n-1}$ \,; \item $|p|< 1$ and there exists
$(c_1,\dots,c_{n-1})\in\mathbb G_{n-1}$ such that
\[
s_i=c_i+\overline{{c}_{n-i}}p\;, \quad i=1,\dots,n-1.
\]
\end{enumerate}

\end{thm}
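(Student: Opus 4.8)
The plan is to establish the cycle of implications $(1)\Rightarrow(2)\Rightarrow(3)\Rightarrow(4)\Rightarrow(5)\Rightarrow(6)\Rightarrow(1)$, leveraging the operator-theoretic results already available (Proposition \ref{prop:POS}, Theorem \ref{existence-uniqueness} and Lemma \ref{lem:BSR2}) applied to the scalar $\Gamma_n$-contraction given by the point $(s_1,\dots,s_{n-1},p)$. The equivalence $(3)\Leftrightarrow(4)$ is purely formal: expanding $\Phi_i$ as in \eqref{eqn:2a} shows that $\Phi_i(\alpha s_1,\dots,\alpha^n p)>0$ says exactly $|n\alpha^n p-\alpha^{n-i}s_{n-i}|^2<|n-\alpha^i s_i|^2$, so both conditions carry the identical side condition $Q\in\mathbb G_{n-1}$. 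The equivalence $(4)\Leftrightarrow(5)$ also reduces to a scalar computation: one optimizes over the phase of $\alpha$ on the unit circle. Writing the inequality $\Phi_i(\alpha s_1,\dots,\alpha^n p)>0$ and noting that the $\alpha$-dependent cross terms are $-n(\alpha^i s_i-\overline{\alpha^{n-i}s_{n-i}}\,\alpha^n p)-n(\overline{\alpha^i s_i}-\overline{\alpha^n p}\,\alpha^{n-i}s_{n-i})=-2n\,\mathrm{Re}\big(\alpha^i(s_i-\bar s_{n-i}p)\big)$, the infimum over $\alpha\in\mathbb T$ of the left side is $n^2(1-|p|^2)+(|s_i|^2-|s_{n-i}|^2)-2n\big(|s_i-\bar s_{n-i}p|\big)$ when one chooses $\alpha$ to align the phase; a symmetric argument with the roles of $i$ and $n-i$ swapped, together with the identity $|s_i|^2-|s_{n-i}|^2 = (|s_i-\bar s_{n-i}p|)(\cdots) - \dots$ after completing squares, yields the symmetric form in $(5)$. (In fact the cleanest route is: $(4)$ for all $\alpha\in\overline{\mathbb D}$ is equivalent to $(4)$ for all $\alpha\in\mathbb T$ by the maximum principle applied to the subharmonic modulus, and then $(5)$ is the phase-optimized version.)

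For the genuinely geometric implications I would argue as follows. For $(1)\Rightarrow(2)$: if $(s_1,\dots,s_{n-1},p)=\pi_n(z_1,\dots,z_n)$ with all $|z_i|<1$, then $(\omega s_1,\dots,\omega^n p)=\pi_n(\omega z_1,\dots,\omega z_n)$, and $|\omega z_i|=|z_i|<1$ since $\omega\in\mathbb T$; this is immediate from the definition of the symmetrization map. For $(2)\Rightarrow(3)$: a point of $\mathbb G_n$ gives a scalar $\Gamma_n$-contraction, so Proposition \ref{prop:POS} gives $\Phi_i(\alpha s_1,\dots,\alpha^n p)\geq 0$; to upgrade $\geq$ to $>$ one uses that $|p|<1$ strictly on $\mathbb G_n$ and that the rotated family $(2)$ together with openness forces strict inequality (if $\Phi_i$ vanished at some $\alpha_0$ one could rotate by $\omega=\alpha_0$ and reach a boundary point of $\Gamma_n$, contradicting membership in the open set $\mathbb G_n$). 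The side condition $Q\in\mathbb G_{n-1}$ in $(3)$ should come from Lemma \ref{lem:BSR2} applied to the scalar $\Gamma_n$-contraction, suitably rescaled: the lemma produces a $\Gamma_{n-1}$-contraction from the first $n-1$ coordinates, and combining this with the strict positivity (so that we land in the open symmetrized polydisc $\mathbb G_{n-1}$, and matching the normalization $1-|p|^2$ in the definition of $Q$) identifies its scalar value as $Q$. The same Lemma is what propagates the side condition through $(4)$ and $(5)$.

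The main obstacle — and where I would spend the most care — is $(6)\Rightarrow(1)$ and the precise bookkeeping of the side condition $Q\in\mathbb G_{n-1}$ in $(5)\Rightarrow(6)$. The natural strategy for $(6)\Rightarrow(1)$ is an induction on $n$: given $(c_1,\dots,c_{n-1})\in\mathbb G_{n-1}$ with $c_j=\pi_{n-1}(w_1,\dots,w_{n-1})$ for $|w_k|<1$, and $|p|<1$, one must produce $z_1,\dots,z_n\in\mathbb D$ with $s_i=c_i+\bar c_{n-i}p=s_i(z)$ and $p(z)=p$. The content is a Fejér–Riesz / Schur-type interpolation: the relation $s_i=c_i+\bar c_{n-i}p$ is exactly the statement that the polynomial $\prod(t-z_k)$ has coefficients determined by a ``$+p\times(\text{reversed conjugate})$'' move, which corresponds to multiplying a degree-$(n-1)$ Schur-stable polynomial by a degree-one factor $(t-\lambda)$ with $|\lambda|<1$, or equivalently to a Schur-algorithm step; making this rigorous requires showing the resulting degree-$n$ polynomial has all roots in $\overline{\mathbb D}$, which follows from a Rouché or Cohn-type root-location argument using $|p|<1$ and $(c_1,\dots,c_{n-1})\in\mathbb G_{n-1}$. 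For $(5)\Rightarrow(6)$, from $|s_i-\bar s_{n-i}p|+|s_{n-i}-\bar s_i p|<n(1-|p|^2)$ one simply sets $c_i:=\dfrac{s_i-\bar s_{n-i}p}{1-|p|^2}\cdot\dfrac{n-i}{n}\cdot(\text{normalization})$ — more precisely one must check that the tuple $Q$, which is assumed to lie in $\mathbb G_{n-1}$, rescales to the required $(c_1,\dots,c_{n-1})$; the algebraic identity $c_i+\bar c_{n-i}p=s_i$ is then a direct verification using $|p|<1$, and the fact that the appropriate rescaling of $Q$ lands in $\mathbb G_{n-1}$ is precisely the hypothesis. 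Throughout I would keep in mind that all six conditions silently presuppose $|p|<1$ (condition $(6)$ states it; conditions $(3)$–$(5)$ force it via the definition of $Q$ and the strict inequalities at $\alpha=0$), so the equivalence is among points already known to satisfy $|p|<1$.
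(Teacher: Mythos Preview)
Your overall architecture is reasonable, but there is a genuine gap in how you handle the side condition $Q\in\mathbb G_{n-1}$, and this gap recurs in several places.

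You propose to obtain $Q\in\mathbb G_{n-1}$ from Lemma~\ref{lem:BSR2}. That lemma, however, produces the tuple
\[
R=\left(\tfrac{n-1}{n}s_1,\tfrac{n-2}{n}s_2,\dots,\tfrac{1}{n}s_{n-1}\right),
\]
which is \emph{not} $Q$; there is no rescaling or ``normalization'' that turns $R$ into $Q=\bigl(\tfrac{s_1-\bar s_{n-1}p}{1-|p|^2},\dots,\tfrac{s_{n-1}-\bar s_1 p}{1-|p|^2}\bigr)$. The confusion resurfaces in your $(5)\Rightarrow(6)$ step, where you write $c_i:=\tfrac{s_i-\bar s_{n-i}p}{1-|p|^2}\cdot\tfrac{n-i}{n}\cdot(\text{normalization})$: the correct choice is simply $c_i=\tfrac{s_i-\bar s_{n-i}p}{1-|p|^2}$, i.e.\ the $i$-th coordinate of $Q$ itself, and then $c_i+\bar c_{n-i}p=s_i$ is an immediate algebraic identity. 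The hypothesis $Q\in\mathbb G_{n-1}$ in $(5)$ is exactly what gives $(c_1,\dots,c_{n-1})\in\mathbb G_{n-1}$ --- no rescaling is needed or available.

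The paper avoids this difficulty by a different route: it takes $(1)\Leftrightarrow(6)$ from Costara's characterization (Theorem~3.6 of \cite{costara1}), and then in $(1)\Rightarrow(3)$ uses that result to write $s_i=c_i+\bar c_{n-i}p$ with $(c_1,\dots,c_{n-1})\in\mathbb G_{n-1}$; a direct computation shows $\tfrac{s_i-\bar s_{n-i}p}{1-|p|^2}=c_i$, whence $Q=(c_1,\dots,c_{n-1})\in\mathbb G_{n-1}$ by uniqueness. The strict positivity of $\Phi_i$ is obtained not by a contradiction argument of the sort you sketch (which, as written, is circular: you would need to know already that $\Phi_i=0$ forces a boundary point) but by an explicit calculation showing $n^2(1-|\alpha^n p|^2)+(|\alpha^i s_i|^2-|\alpha^{n-i}s_{n-i}|^2)-2n|\alpha^i s_i-\overline{\alpha^{n-i}s_{n-i}}\,\alpha^n p|=(n-|c_i|+|c_{n-i}|)(n-|c_i|-|c_{n-i}|)(1-|\alpha^n p|^2)>0$, using $|c_i|+|c_{n-i}|<n$. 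Your $(6)\Rightarrow(1)$ Schur/Rouch\'e idea would essentially reprove Costara's theorem; that is a legitimate project, but it is not needed here once one cites the existing result.
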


\begin{proof}
The equivalence of $(1)$ and $(6)$ was established in
\cite{costara1} (see Theorem 3.6 in \cite{costara1}). Also
$(1)\Leftrightarrow (2)$ follows from author's previous result
Lemma 2.3 in \cite{sourav3}.
We shall prove here $(1)\Rightarrow (3)\Rightarrow (5) \Rightarrow (6)$ and $(3)\Leftrightarrow (4)$.\\

\noindent $(1)\Rightarrow (3).$ Let $(s_1,\dots,s_{n-1},p)\in
\mathbb G_n$ and let $\alpha\in \overline{\mathbb D}$. Then
\[
(\alpha s_1,\dots,{\alpha}^{n-1}s_{n-1},{\alpha}^np)\in\mathbb
G_n.
\]
The fact that
\[
\Phi_{i}(\alpha s_1,\dots,\alpha^{n-i} s_{n-i}, \alpha^n p)>0
\]
was proved in Proposition 2.5 in \cite{sourav3}. Also one can
clearly see that it follows from operator version of the result,
that is, Proposition \ref{lem:3} as each point in $\mathbb G_n$ is
a $\Gamma_n$-contraction. We have that
\[
s_i=\left( \frac{s_i-\overline{s_{n-i}p}}{1-|p|^2} \right) +
\left( \overline{\frac{s_{n-i}-\bar s_i p}{1-|p|^2}} \right) p =
c_i+\overline{c_{n-i}}p.
\]
By Theorem 3.6 in \cite{costara1}, $(c_1,\dots,c_{n-1})\in\mathbb
G_{n-1}$ is unique such that
\[
s_i=c_i+\overline{c_{n-i}}p \quad \text{ for each } i\,.
\]
So, we have that $Q=(c_1,\dots,c_{n-1})\in\mathbb G_{n-1}$.\\

\noindent $(3)\Rightarrow (5).$ First we assume that $s_i\neq
\overline{s_{n-i}}p$ for each $i=1,\dots,n-1$. Then for $\omega_1 ,\omega_2
\in\mathbb T$, we have for each $i=1,\dots,n-1$
\[
\Phi_{i}(\omega_1 s_1,\dots,{\omega_1}^{n-1}
s_{n-1},{\omega_1}^np)+\Phi_{n-i}(\omega_2 s_1,\dots, {\omega_2}^{n-1}
s_{n-1},{\omega_2}^np) > 0
\]
that is
\[
\tilde n_i^2(1-|p|^2)> \text{Re }\tilde n_i
[\omega_1(s_i-\overline{s_{n-i}}p)+{\omega_2}^2(s_{n-i}-\overline{s_i}p)].
\]
Since $s_i\neq \overline{s_{n-i}}p$ for each $i$, we choose
\[
\omega_1 =\frac{|s_i-\overline{s_{n-i}}p|}{s_i-\overline{s_{n-i}}p} \text{ and }
\omega_{2}=\sqrt{\frac{|s_{n-i}-\overline{s_i}p|}{s_{n-i}-\overline{s_i}p}}
\]
and get
\[
\tilde n_i(1-|p|^2)>
|s_i-\overline{s_{n-i}}p|+|s_{n-i}-\overline{s_i}p|\,.
\]
If for some $i$, $s_i= \overline{s_{n-i}}p$ or $s_{n-i}= \overline{s_{i}}p$
then the above inequality is obvious.\\

\noindent $(5)\Rightarrow (6).$ Since $|p|<1$ we choose
\[
c_i=\frac{s_i-\overline{s_{n-i}}p}{1-|p|^2} \quad \text{ for } i=1,\dots,n-1\,.
\]
It is evident that $s_i=c_i+\overline{c_{n-i}}p$.
Also by the hypothesis of $(5)$,
$Q=(c_1,c_2,\dots,c_{n-1})\in\mathbb G_{n-1}$ and hence $(6)$ follows.\\

\noindent $(3)\Leftrightarrow (4).$ The proof follows from
author's result Proposition 2.5 in \cite{sourav3}, only the
symbols $'\geq '$ and $' \leq '$ are to be replaced by $' > '$ and
$' < '$ respectively. Hence the proof is complete.

\end{proof}

\noindent In \cite{costara1}, Costara proved that the distinguished boundary $b\G$ of the symmetrized polydisc is the symmetrization of the $n$-torus $\T^n$, that is,
\[
b\G =\left\{ \left(\sum_{1\leq i\leq n} z_i,\sum_{1\leq
i<j\leq n}z_iz_j,\dots,
\prod_{i=1}^n z_i \right): \,|z_i|= 1, i=1,\dots,n \right \}.
\]
Note that $b\G$ is a proper subset of $\G \setminus \gn$, the topological boundary of $\gn$. The next theorem, which appeared in \cite{BSR}, provides
a set of characterizations for the points of the distinguished
boundary $b\Gamma_n$ of $\Gamma_n$.

\begin{thm}[\cite{BSR}, Theorem 2.4]\label{thm:DB}
For $(s_1,\dots,s_{n-1},p)\in\mathbb C^n$ the following are equivalent:
\begin{enumerate}
\item $(s_1,\dots,s_{n-1},p)\in b\Gamma_n$\,; \item $(s_1,\dots,s_{n-1},p)\in\Gamma_n$
and $|p|=1$\,; \item $|p|=1$, $s_i=\overline{s_{n-i}}p$ and
$\left(\dfrac{n-1}{n}s_1,\dfrac{n-2}{n}s_2,\dots, \dfrac{1}{n}s_{n-1}\right)\in\Gamma_{n-1}$\,; \item $|p|=1$
and there exists $(c_1,\dots,c_{n-1})\in b\Gamma_n$ such that\\
$ s_i=c_i+\overline{c_{n-i}}p $ , for $i=1,\dots,n-1$.
\end{enumerate}
\end{thm}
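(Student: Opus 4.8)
The plan is to reduce the whole statement to the description of the distinguished boundary already recorded in the introduction, $b\Gamma_n=\pi_n(\mathbb T^n)$: a point lies in $b\Gamma_n$ exactly when it equals $\pi_n(z)$ for some $z=(z_1,\dots,z_n)$ with $|z_j|=1$ for every $j$. Granting this, $(1)\Rightarrow(2)$ is immediate, and $(2)\Rightarrow(1)$ holds because if $(s_1,\dots,s_{n-1},p)=\pi_n(z)$ with $z\in\overline{\mathbb D^n}$ and $|p|=|z_1\cdots z_n|=1$, then $\prod_j|z_j|=1$ while $|z_j|\le 1$ forces $|z_j|=1$ for all $j$. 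It therefore remains to show that the conditions in $(3)$ and $(4)$ each characterize ``$(s_1,\dots,s_{n-1},p)$ is the symmetrization of a tuple on the torus''. Throughout I attach to a point $(s_1,\dots,s_{n-1},p)$ its monic polynomial $g(x)=x^n-s_1x^{n-1}+\cdots+(-1)^np$, whose root multiset $\{z_1,\dots,z_n\}$ satisfies $\pi_n(z)=(s_1,\dots,s_{n-1},p)$, and I use Vieta's formulas, which say that the elementary symmetric functions of an unordered $m$-tuple determine and are determined by that tuple. (Here, as in the rest of the paper, $s_i$ is the $i$-th elementary symmetric polynomial, which I also write $e_i$.)

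For $(2)\Rightarrow(3)$: pick $z\in\mathbb T^n$ with $\pi_n(z)=(s_1,\dots,s_{n-1},p)$. Since $\overline{z_j}=z_j^{-1}$, the identity $e_{n-i}(z_1^{-1},\dots,z_n^{-1})=e_i(z)/e_n(z)$ yields $\overline{s_{n-i}}=s_i/p$, that is $s_i=\overline{s_{n-i}}\,p$. Writing $\tfrac{1}{n}g'(x)=\prod_{j=1}^{n-1}(x-w_j)$, comparison of coefficients gives $e_i(w)=\tfrac{n-i}{n}s_i$, while by the Gauss--Lucas theorem $w_1,\dots,w_{n-1}$ lie in the convex hull of $\{z_1,\dots,z_n\}\subseteq\overline{\mathbb D}$; hence $\bigl(\tfrac{n-1}{n}s_1,\dots,\tfrac{1}{n}s_{n-1}\bigr)=\pi_{n-1}(w)\in\pi_{n-1}(\overline{\mathbb D^{n-1}})=\Gamma_{n-1}$. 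For $(2)\Leftrightarrow(4)$ (reading the membership in $(4)$ as $(c_1,\dots,c_{n-1})\in b\Gamma_{n-1}$, since $b\Gamma_n\subseteq\mathbb C^n$ does not typecheck): given $z\in\mathbb T^n$ as above, set $c_k=e_k(z_1,\dots,z_{n-1})$; then $s_i=e_i(z_1,\dots,z_n)=c_i+z_nc_{i-1}$, and since $p=z_nc_{n-1}$ and $\overline{c_{n-i}}=c_{i-1}/c_{n-1}$ (again because $|z_j|=1$), we get $z_nc_{i-1}=\overline{c_{n-i}}\,p$, so $s_i=c_i+\overline{c_{n-i}}\,p$ with $(c_1,\dots,c_{n-1})=\pi_{n-1}(z_1,\dots,z_{n-1})\in b\Gamma_{n-1}$. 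Conversely, if $|p|=1$ and $(c_1,\dots,c_{n-1})=\pi_{n-1}(\zeta_1,\dots,\zeta_{n-1})$ with all $|\zeta_j|=1$, put $\zeta_n=p/(\zeta_1\cdots\zeta_{n-1})\in\mathbb T$; reversing the same computation gives $\pi_n(\zeta_1,\dots,\zeta_n)=(s_1,\dots,s_{n-1},p)$, which is $(2)$.

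The crux is $(3)\Rightarrow(2)$. Assume $|p|=1$, $s_i=\overline{s_{n-i}}\,p$ for all $i$, and $\bigl(\tfrac{n-1}{n}s_1,\dots,\tfrac{1}{n}s_{n-1}\bigr)\in\Gamma_{n-1}$. The relations $s_i=\overline{s_{n-i}}\,p$ (which automatically include the cases $i=0,n$ once $|p|=1$) say precisely that $g$ is \emph{self-inversive}, namely $x^n\,\overline{g(1/\bar x)}=(-1)^n\bar p\,g(x)$ with $|(-1)^n\bar p|=1$; equivalently the root multiset of $g$ is invariant under $z\mapsto 1/\bar z$, and $g(0)=(-1)^np\neq0$ so $0$ is not a root. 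On the other hand $\bigl(\tfrac{n-1}{n}s_1,\dots,\tfrac{1}{n}s_{n-1}\bigr)=\pi_{n-1}(u)$ for some $u\in\overline{\mathbb D^{n-1}}$, and since this tuple is the coefficient data of $\tfrac{1}{n}g'$, the root multiset of $\tfrac{1}{n}g'$ is $\{u_1,\dots,u_{n-1}\}\subseteq\overline{\mathbb D}$; i.e.\ all critical points of $g$ lie in $\overline{\mathbb D}$. By Cohn's theorem on self-inversive polynomials --- a self-inversive polynomial all of whose critical points lie in $\overline{\mathbb D}$ has all its roots on $\mathbb T$ --- every $z_j$ satisfies $|z_j|=1$, so $(s_1,\dots,s_{n-1},p)=\pi_n(z)\in b\Gamma_n$ with $|p|=1$, which is $(2)$.

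The only nonroutine ingredient is $(3)\Rightarrow(2)$, and within it the assertion that for a self-inversive polynomial, having all critical points in the closed disk forces all roots onto the unit circle; this is precisely Cohn's theorem and is the real obstacle. If one wishes to avoid citing it, one can instead induct on $n$ with base case $n=2$ --- the Agler--Young description $b\Gamma_2=\{(s,p):|p|=1,\ s=\bar sp,\ |s|\le 2\}$ --- but the inductive step still requires upgrading $\bigl(\tfrac{n-1}{n}s_1,\dots,\tfrac{1}{n}s_{n-1}\bigr)\in\Gamma_{n-1}$ to membership in $b\Gamma_{n-1}$ before the hypothesis can be applied, and carrying out that upgrade amounts to reproving Cohn's theorem. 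Everything else --- the reciprocal-root identity for torus tuples, the Gauss--Lucas identification of $\bigl(\tfrac{n-1}{n}s_1,\dots,\tfrac{1}{n}s_{n-1}\bigr)$ with the symmetrized critical points of $g$, and the single-coordinate peeling used for $(4)$ --- is routine bookkeeping with elementary symmetric functions.
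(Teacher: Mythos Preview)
Your proof is correct. The paper does not supply its own argument for this theorem but refers the reader to \cite{BSR}; the approach there is essentially the one you give --- identify $\bigl(\tfrac{n-1}{n}s_1,\dots,\tfrac{1}{n}s_{n-1}\bigr)$ with the symmetrization of the critical points of the associated monic polynomial $g$, use Gauss--Lucas for the forward direction, and invoke Cohn's theorem on self-inversive polynomials for the hard implication $(3)\Rightarrow(2)$. Your observation that condition $(4)$ should read $(c_1,\dots,c_{n-1})\in b\Gamma_{n-1}$ rather than $b\Gamma_n$ is also correct; this is a typographical slip in the statement as printed.
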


\noindent We now present a set of characterizations for the points in the closed symmetrized polydisc $\Gamma_n$.

\begin{thm}\label{thm:SC1}
Let $(s_1,\dots,s_{n-1},p)\in\mathbb C^n$ and let
\[
Q=\left( \dfrac{s_1-\overline{ s_{n-1}}p}{1-|p|^2}, \dfrac{s_2-\overline{ s_{n-2}}p}{1-|p|^2},\dots,\dfrac{s_{n-1}-\overline{ s_{1}}p}{1-|p|^2} \right)\in\mathbb C^{n-1}\,,
\]
when $|p|\neq 1$ and
\[
R=\left(\dfrac{n-1}{n}s_1,\dfrac{n-2}{n}s_2,\dots, \dfrac{1}{n}s_{n-1}\right).
\]
Then the following are equivalent:
\begin{enumerate}
\item $(s_1,\dots,s_{n-1},p)\in\Gamma_n$\,; \item $\Gamma_n$ is a
complete spectral set for $(s_1,\dots,s_{n-1},p)$ \,; \item
$(\omega s_1,\omega^2 s_2,\dots, \omega^{n-1}s_{n-1},\omega^n
p)\in\Gamma_n$ for all $\omega \in\mathbb T$  \,; \item
$\Phi_{i}(\alpha s_1,\dots, \alpha^{n-1} s_{n-1},\alpha^n p)\geq
0$ for all $\alpha\in\overline{\mathbb D}$, $i=1,2, \dots, n-1$
and either $|p|=1$ and $R \in\Gamma_{n-1}$ or $Q\in \Gamma_{n-1}$
\,; \item For each $i=1,\dots,n-1$,$\left| {n\alpha^n
p-\alpha^{n-i} s_{n-i}}\right|\leq |{n-\alpha^i s_i}|$, for all
$\alpha\in\overline{\mathbb D}$ and either $|p|=1$ and $R
\in\Gamma_{n-1}$ or $Q\in \Gamma_{n-1}$ \,; \item
$|s_i-\bar{s_{n-i}}p|+|s_{n-i}-\overline{s_i}p|\leq n(1-|p|^2)$
and either $|p|=1$ and $R \in\Gamma_{n-1}$ or $Q\in \Gamma_{n-1}$
\,; \item $|p|\leq 1$ and there exists
$(c_1,\dots,c_{n-1})\in\Gamma_{n-1}$ such that
\[
s_i=c_i+\overline{{c}_{n-i}}p\;, \quad i=1,\dots,n-1.
\]
\end{enumerate}

\end{thm}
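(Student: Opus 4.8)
The plan is to mirror the proof of Theorem~\ref{thm:SC2}, relaxing every strict inequality there to a non-strict one, and to organise the argument as follows: treat $(1)\Leftrightarrow(2)$, $(1)\Leftrightarrow(3)$ and $(1)\Leftrightarrow(7)$ separately, then run the chain $(1)\Rightarrow(4)\Rightarrow(6)\Rightarrow(7)$ together with $(4)\Leftrightarrow(5)$ to close the circle. The genuinely new feature, absent in the open case, is the locus $|p|=1$: there $Q$ is undefined, and Theorem~\ref{thm:DB} has to take over the role played by Costara's decomposition. Threading that locus through each implication is the only real subtlety.

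I would first clear the cheap equivalences. For $(1)\Leftrightarrow(2)$: a scalar tuple has Taylor joint spectrum equal to the single point $\{(s_1,\dots,s_{n-1},p)\}$, so $\Gamma_n$ being a spectral set already forces the point into $\Gamma_n$; conversely, if the point lies in $\Gamma_n$ then $\|F(s_1,\dots,s_{n-1},p)\|\le\sup_{\Gamma_n}\|F\|$ for every matrix rational function $F$ with poles off $\Gamma_n$, so $\Gamma_n$ is even a complete spectral set. For $(1)\Leftrightarrow(3)$: the rotation $(z_1,\dots,z_n)\mapsto(\omega z_1,\dots,\omega z_n)$, $\omega\in\mathbb T$, preserves $\overline{\mathbb D^n}$ and corresponds under $\pi_n$ to $(s_1,\dots,s_{n-1},p)\mapsto(\omega s_1,\dots,\omega^n p)$, so $\Gamma_n$ is invariant under the latter; this is Lemma~2.3 of \cite{spal1} in closed form. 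For $(1)\Leftrightarrow(7)$: this is the closed analogue of Theorem~3.6 of \cite{costara1} when $|p|<1$, and for $|p|=1$ it is the equivalence $(1)\Leftrightarrow(4)$ of Theorem~\ref{thm:DB} (using $b\Gamma_{n-1}\subseteq\Gamma_{n-1}$). Finally $(4)\Leftrightarrow(5)$ is immediate: reading off (\ref{eq:1a}), for scalars $\Phi_i(\alpha s_1,\dots,\alpha^n p)=|n-\alpha^i s_i|^2-|n\alpha^n p-\alpha^{n-i}s_{n-i}|^2$, so the condition $\Phi_i\ge 0$ and the modulus inequality are equivalent term by term, and the ``either\,\dots\,or'' clause is literally the same in (4) and (5).

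For the chain, $(1)\Rightarrow(4)$ is where the closed case is actually \emph{easier} than the open one: since a point of $\Gamma_n$ is a $\Gamma_n$-contraction, Proposition~\ref{prop:POS} gives $\Phi_i(\alpha s_1,\dots,\alpha^n p)\ge 0$ for all $\alpha\in\overline{\mathbb D}$ and all $i$ at once (in the open case Proposition~\ref{prop:POS} had to be sharpened to strict positivity); the ``either\,\dots\,or'' clause then comes from Theorem~\ref{thm:DB} if $|p|=1$ (giving $R\in\Gamma_{n-1}$) and from $(1)\Leftrightarrow(7)$ if $|p|<1$ (the resulting $(c_1,\dots,c_{n-1})$ must equal $Q$, since solving $s_i=c_i+\overline{c_{n-i}}p$ forces $c_i=(s_i-\overline{s_{n-i}}p)/(1-|p|^2)$). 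The step $(4)\Rightarrow(6)$ copies $(3)\Rightarrow(5)$ of Theorem~\ref{thm:SC2}: adding $\Phi_i(\omega_1 s_1,\dots,\omega_1^n p)\ge 0$ to $\Phi_{n-i}(\omega_2 s_1,\dots,\omega_2^n p)\ge 0$ and choosing $\omega_1,\omega_2\in\mathbb T$ as there gives $|s_i-\overline{s_{n-i}}p|+|s_{n-i}-\overline{s_i}p|\le n(1-|p|^2)$, while the clause is inherited from (4). For $(6)\Rightarrow(7)$: if $|p|<1$ put $c_i=(s_i-\overline{s_{n-i}}p)/(1-|p|^2)=Q_i\in\Gamma_{n-1}$, so that $s_i=c_i+\overline{c_{n-i}}p$; if $|p|=1$ the scalar inequality of (6) collapses to $s_i=\overline{s_{n-i}}p$ for each $i$, which together with $R\in\Gamma_{n-1}$ is exactly condition (3) of Theorem~\ref{thm:DB}, so condition (4) of that theorem yields $(c_1,\dots,c_{n-1})\in\Gamma_{n-1}$ with $s_i=c_i+\overline{c_{n-i}}p$. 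Since $(7)\Rightarrow(1)$ is the other direction of $(1)\Leftrightarrow(7)$, the circle is closed.

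The main obstacle I anticipate is precisely the bookkeeping on $|p|=1$: in each implication one has to check that the degenerate branch behaves correctly — above all, that the single scalar inequality of (6) rigidifies to $s_i=\overline{s_{n-i}}p$, which is the hinge that hands the argument over to Theorem~\ref{thm:DB} — and one needs the closed form of Costara's decomposition ($(1)\Leftrightarrow(7)$ for $|p|<1$) available, either quoted from \cite{costara1} or recovered from Theorem~\ref{thm:SC2} by letting the polyradius tend to $1$. Everything else is the computation already carried out for $\mathbb G_n$ in Theorem~\ref{thm:SC2}, essentially verbatim, with every ``$>$'' weakened to ``$\ge$''.
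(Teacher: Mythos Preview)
Your proposal is correct and follows essentially the same route as the paper: the paper too disposes of $(1)\Leftrightarrow(2)$, $(1)\Leftrightarrow(3)$ and $(1)\Leftrightarrow(7)$ by citation (the last via Theorem~3.7 of \cite{costara1}), declares $(4)\Leftrightarrow(5)\Leftrightarrow(6)$ to follow from Theorem~\ref{thm:SC2}, and then proves only $(1)\Rightarrow(4)$ and $(6)\Rightarrow(7)$, splitting each into the cases $|p|=1$ (handled by Theorem~\ref{thm:DB}) and $|p|<1$ (handled by Costara's decomposition, identifying the resulting tuple with $Q$) exactly as you do. Your write-up is in fact a bit more explicit about $(4)\Leftrightarrow(5)$ and $(4)\Rightarrow(6)$ than the paper's appeal to Theorem~\ref{thm:SC2}, but the substance is identical.
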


\begin{proof}
The equivalence of $(1)$ and $(2)$ is obvious and follows from the definition.
Part $(1)\Leftrightarrow (3)$ was established by the author in Lemma 2.3 in \cite{sourav3}.
Also $(1)\Leftrightarrow (7)$ was proved by Costara, ( see Theorem 3.7 in \cite{costara1}).
The equivalence of $(4),(5)$ and $(6)$ follows from the previous result (Theorem \ref{thm:SC2}).
So it suffices if we prove $(1)\Rightarrow (4)$ and $(6)\Rightarrow (7)$.\\

\noindent $(1)\Rightarrow(4).$ Let $(s_1,\dots,s_{n-1},p)\in \Gamma_n$. Then the fact that
\[
\Phi_{i}(\alpha s_1,\dots,
\alpha^{n-1} s_{n-1},\alpha^n p)\geq 0\,,
\]
for all $\alpha\in\overline{\mathbb D}$, $i=1,2, \dots, n-1$, was
established in Proposition 2.5 in \cite{sourav3} by the author.
Since $(s_1,\dots,s_{n-1},p)\in \Gamma_n$, we have that $|p|\leq
1$. If $|p|=1$, then by Theorem \ref{thm:DB},
$(s_1,\dots,s_{n-1},p)\in b\Gamma_n$ and consequently $R\in
\Gamma_{n-1}$ by Theorem \ref{thm:DB}. Let $|p|<1$. Since
$(s_1,\dots,s_{n-1},p)\in \Gamma_n$ and $|p|<1$, by Costara's
result (Theorem 3.7 in \cite{costara1}), there exists a unique
$(c_1,\dots,c_{n-1})\in\Gamma_{n-1}$ such that
\[
s_i=c_i+\overline{c_{n-i}}p \quad \text{ for each } i=1,\dots,n-1.
\]
Again since for each $i=1,\dots,n-1$,
\[
s_i=\left( \frac{s_i-\overline{s_{n-i}p}}{1-|p|^2} \right) + \left( \overline{\frac{s_{n-i}-\bar s_i p}{1-|p|^2}} \right) p\,,
\]
we have that $Q=(c_1,\dots,c_{n-1})\in \Gamma_{n-1}$.\\

\noindent $(6)\Rightarrow (7).$ Let $(6)$ holds. Then $|p|\leq 1$. If $|p|=1$, then the left hand side of $(6)$ reduces to
\[
s_i=\overline{s_{n-i}}p \quad \text{ for each } i=1,\dots, n-1.
\]
Also since $R\in\Gamma_{n-1}$, by Theorem \ref{thm:DB},
$(s_1,\dots,s_{n-1},p)\in b\Gamma_n$. So, there exists
$(c_1,\dots,c_{n-1})\in b\Gamma_{n-1}$ such that
\[
s_i=c_i+\overline{c_{n-i}}p \quad \text{ for } i=1,\dots, n-1.
\]
When $|p|<1$, we can write
\begin{align*}
s_i & =\left( \frac{s_i-\overline{s_{n-i}p}}{1-|p|^2} \right) +
\left( \overline{\frac{s_{n-i}-\bar s_i p}{1-|p|^2}} \right) p
\\& = c_i+\overline{c_{n-i}}p\,,
\end{align*}
where $Q=(c_1,\dots,c_{n-1})\in\Gamma_{n-1}$. Therefore, $(7)$ is
established and the proof is complete.

\end{proof}

\vspace{0.1cm}

\section{The $\Gamma_n$-unitaries and $\Gamma_n$-isometries}\label{unitaries-isometries}

\vspace{0.4cm}

\noindent In this Section, we provide
several new characterizations for the $\Gamma_n$-unitaries and
$\Gamma_n$-isometries. Also, we make a refinement of
the model for pure $\Gamma_n$-isometries described in \cite{BSR}. We state a lemma first whose proof is a routine exercise.
\begin{lem} \label{basiclemma} Let $Q$ be a bounded operator on a Hilbert space
$\mathcal H$. If Re $\beta Q \le 0$ for all complex numbers
$\beta$ of modulus $1$, then $Q = 0$.
\end{lem}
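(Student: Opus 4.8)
The statement to prove is: if $Q$ is a bounded operator on a Hilbert space $\mathcal H$ with $\mathrm{Re}\,\beta Q \le 0$ for all $\beta \in \mathbb T$, then $Q = 0$. The plan is to extract information from the hypothesis by choosing $\beta$ cleverly, then combine the two resulting operator inequalities to force $Q + Q^* = 0$ and $i(Q - Q^*) = 0$ simultaneously, whence $Q = 0$.

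First I would recall that the condition $\mathrm{Re}\,A \le 0$ for a bounded operator $A$ means the self-adjoint operator $A + A^*$ is negative semidefinite, i.e. $\langle (A+A^*)h, h\rangle \le 0$ for all $h \in \mathcal H$. So the hypothesis says $\beta Q + \overline{\beta} Q^* \le 0$ for every $\beta$ of modulus one. The key step is to specialize: taking $\beta = 1$ gives $Q + Q^* \le 0$, while taking $\beta = -1$ gives $-(Q + Q^*) \le 0$, that is $Q + Q^* \ge 0$. Together these force $Q + Q^* = 0$, so $Q$ is skew-adjoint up to this point — more precisely $\mathrm{Re}\,Q = 0$. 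Next I would run the same argument with $\beta = i$ and $\beta = -i$: these give $iQ - iQ^* \le 0$ and $-(iQ - iQ^*) \le 0$, hence $i(Q - Q^*) = 0$, so $Q - Q^* = 0$ as well. Adding $Q + Q^* = 0$ and $Q - Q^* = 0$ yields $2Q = 0$, so $Q = 0$.

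There is no real obstacle here; the only thing to be slightly careful about is the bookkeeping between ``$\mathrm{Re}\,\beta Q \le 0$'' as a statement about the numerical range versus as the operator inequality $\beta Q + \overline\beta Q^* \le 0$ — these are equivalent by definition of operator positivity via quadratic forms, and it is this equivalence that lets one add the inequalities obtained from $\beta$ and $-\beta$. Alternatively, one can phrase the whole thing in terms of the numerical range $W(Q)$: the hypothesis says $\mathrm{Re}(\beta z) \le 0$ for every $z \in W(Q)$ and every unimodular $\beta$, which forces $|z| \le -\mathrm{Re}(\beta z)$ to fail unless $z = 0$; concretely, choosing $\beta = \overline{z}/|z|$ when $z \ne 0$ gives $|z| = \mathrm{Re}(\beta z) \le 0$, a contradiction, so $W(Q) = \{0\}$ and hence $Q = 0$. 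I would present the short operator-inequality version as the main line of proof, since it matches the style of Lemma~\ref{lem:22} earlier in the paper, and it makes transparent why ``$\beta$ and $-\beta$'' (equivalently $\beta = \pm 1, \pm i$) is exactly what one needs.
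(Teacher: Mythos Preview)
Your proof is correct. The paper itself does not supply a proof of this lemma, stating only that it ``is a routine exercise''; your argument via the four choices $\beta = \pm 1, \pm i$ to kill both the real and imaginary parts of $Q$ is precisely the standard routine the paper has in mind.
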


\noindent We begin with a set of independent characterizations for a $\Gamma_n$-unitary.

\begin{thm} \label{G-unitary}
Let $(S_1,\dots,S_{n-1},P)$ be a commuting operator tuple defined
on a Hilbert space $\mathcal{H}.$ Then the following are
equivalent:
\begin{enumerate}

\item $(S_1,\dots,S_{n-1},P)$ is a $\Gamma_n$-unitary \, ; \item there exist
commuting unitary operators $U_{1},\dots,U_n$ on $\mathcal{H}$
such that
\[
(S_1,\dots,S_{n-1},P)= \pi_n(U_1,\dots,U_{n})\,;
\]
\item $P$ is unitary, $S_i=S_{n-i}^*P$ and
$\left(\dfrac{n-1}{n}S_1,\dots,\dfrac{1}{n}S_{n-1}\right)$ is a
$\Gamma_{n-1}$-contraction \,; \item $(S_1,\dots,S_{n-1},P)$ is a
$\Gamma_n$-contraction and $P$ is a unitary \,; \item $P$ is
unitary and there exists a $\Gamma_{n-1}$-unitary
$(C_1,\dots,C_{n-1})$ on $\mathcal H$ such that
$C_1,\dots,C_{n-1}$ commute with $P$ and
\[
S_i=C_i+C_{n-i}^*P \,, \quad i=1,\dots,n-1.
\]
\end{enumerate}
\end{thm}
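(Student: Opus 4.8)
The plan is to prove the cycle of implications $(1)\Rightarrow(2)\Rightarrow(3)\Rightarrow(4)\Rightarrow(5)\Rightarrow(1)$, leaning on the scalar characterizations from Section~\ref{scalarcase} (especially Theorem~\ref{thm:DB}) and on the structure of normal tuples via the spectral theorem. For $(1)\Rightarrow(2)$: if $(S_1,\dots,S_{n-1},P)$ is a $\Gamma_n$-unitary, it is a commuting normal tuple with $\sigma_T$ in $b\Gamma_n$, so by the spectral theorem there is a spectral measure $E$ on $b\Gamma_n$ with $S_i=\int s_i\,dE$, $P=\int p\,dE$. Since every point of $b\Gamma_n$ equals $\pi_n(z_1,\dots,z_n)$ with $|z_j|=1$, the coordinate functions $z_j$ are defined a.e.\ (measurably, using a measurable selection of the roots of $w^n - s_1 w^{n-1} + \cdots$), and setting $U_j=\int z_j\,dE$ gives commuting unitaries with $\pi_n(U_1,\dots,U_n)=(S_1,\dots,S_{n-1},P)$.

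For $(2)\Rightarrow(3)$: from $(S_1,\dots,S_{n-1},P)=\pi_n(U_1,\dots,U_n)$ with $U_j$ commuting unitaries, $P=\prod U_j$ is unitary; the Newton-type identity $\overline{s_{n-i}}\,p = s_i$ holds pointwise on $\mathbb T^n$ (since $\overline{z_j}=z_j^{-1}$ there) hence $S_i = S_{n-i}^*P$ at the operator level by the functional calculus; and $\left(\frac{n-1}{n}S_1,\dots,\frac1n S_{n-1}\right)$ is a $\Gamma_{n-1}$-contraction because $\frac{n-i}{n}s_i(z) = s_i\!\left(\frac{n-1}{n}z_1,\dots\right)$-type scaling places its joint spectrum inside $\Gamma_{n-1}$ — more cleanly, it is the image under $\pi_{n-1}$ of a commuting unitary tuple after the standard contractive rescaling, so $\Gamma_{n-1}$ is a spectral set for it. (Alternatively invoke Theorem~\ref{thm:DB}$(3)$ fibrewise.) The implication $(3)\Rightarrow(4)$ is the substantive one: given $P$ unitary, $S_i=S_{n-i}^*P$, and the $\Gamma_{n-1}$-contractivity of the rescaled tuple, one must show $\Gamma_n$ is a spectral set for $(S_1,\dots,S_{n-1},P)$. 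The route I would take is to use the von Neumann-type inequality encoded in Theorem~\ref{thm:SC1}: it suffices to check, via the operator pencils $\Phi_i$ of \eqref{eq:1a}, that $\Phi_i(\alpha S_1,\dots,\alpha^n P)\ge 0$ for $\alpha\in\overline{\mathbb D}$; but $S_i - S_{n-i}^*P = 0$ by hypothesis, so \eqref{eq:1a} collapses to $\Phi_i = n^2(I-P^*P) + (S_i^*S_i - S_{n-i}^*S_{n-i}) = S_i^*S_i - S_{n-i}^*S_{n-i}$ (using $P$ unitary), and one shows this is handled by the $\Gamma_{n-1}$-contractivity of $\left(\frac{n-1}{n}S_1,\dots\right)$ together with a spectral-theorem reduction to the scalar case on $\sigma_T(P)\subset\mathbb T$.

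For $(4)\Rightarrow(5)$: if $(S_1,\dots,S_{n-1},P)$ is a $\Gamma_n$-contraction with $P$ unitary, then by Theorem~\ref{existence-uniqueness} the fundamental operators $F_i\in\mathcal B(\mathcal D_P)$ satisfy $S_i - S_{n-i}^*P = D_P F_i D_P = 0$ since $D_P=(I-P^*P)^{1/2}=0$; hence $S_i=S_{n-i}^*P$. Now set $C_i = \frac12(S_i + ?)$ — more precisely, solve the system $S_i = C_i + C_{n-i}^*P$ under the constraint that $(C_1,\dots,C_{n-1})$ be a $\Gamma_{n-1}$-unitary commuting with $P$; since $P$ is unitary one can take $C_i$ to be the appropriate component obtained from $S_i$ and $S_{n-i}$ (mirroring the scalar formula $c_i = \frac{s_i - \overline{s_{n-i}}p}{1-|p|^2}$ which degenerates when $|p|=1$, so instead use the relation from Theorem~\ref{thm:DB}$(4)$ at the operator level), and Lemma~\ref{lem:BSR2} gives that the rescaled tuple is a $\Gamma_{n-1}$-contraction which is moreover a $\Gamma_{n-1}$-unitary because its last coordinate is forced to be unitary. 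Finally $(5)\Rightarrow(1)$: given $P$ unitary and a commuting $\Gamma_{n-1}$-unitary $(C_1,\dots,C_{n-1})$ with $S_i = C_i + C_{n-i}^*P$, apply $(2)$ in dimension $n-1$ to get commuting unitaries $V_1,\dots,V_{n-1}$ with $(C_1,\dots,C_{n-1})=\pi_{n-1}(V_1,\dots,V_{n-1})$, all commuting with $P$; then the scalar identity $\pi_n(z_1,\dots,z_{n-1},p)$ with $s_i = c_i + \overline{c_{n-i}}p$ shows $(S_1,\dots,S_{n-1},P)=\pi_n$ of a commuting unitary tuple (one checks the joint spectral picture by the spectral theorem on the commuting family $\{V_1,\dots,V_{n-1},P\}$), so its Taylor spectrum lies in $b\Gamma_n$, i.e.\ it is a $\Gamma_n$-unitary. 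The main obstacle I anticipate is $(3)\Rightarrow(4)$: the collapse of $\Phi_i$ is clean, but rigorously passing from the scalar inequality characterization of $\Gamma_n$ (Theorem~\ref{thm:SC1}) to the operatorial von Neumann inequality requires either a joint-spectral-measure argument on the normal tuple $(S_1,\dots,S_{n-1},P)$ — which is circular if one does not yet know normality — or a direct completely-contractive-representation argument; I would resolve this by first establishing $P$ unitary forces enough normality of the $S_i$ (via $S_i=S_{n-i}^*P$ and commutativity) to run the spectral theorem, and then the problem genuinely reduces to the scalar statement $(1)\Leftrightarrow(4)$ of Theorem~\ref{thm:SC1} fibred over $\sigma_T(P)$.
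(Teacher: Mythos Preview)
The main structural issue is your choice of cycle. The paper takes the equivalence $(1)\Leftrightarrow(2)\Leftrightarrow(3)$ as already established in \cite{BSR} and then proves $(2)\Rightarrow(4)\Rightarrow(3)$ and $(2)\Rightarrow(5)\Rightarrow(3)$, sandwiching the new conditions $(4)$ and $(5)$ between $(2)$ and $(3)$. This lets one take the easy direction everywhere: $(2)\Rightarrow(4)$ is trivial, and $(4)\Rightarrow(3)$ follows naturally from Proposition~\ref{prop:POS} (the $\Phi_i$ are nonnegative for any $\Gamma_n$-contraction) combined with $P^*P=I$, which first forces $S_i^*S_i=S_{n-i}^*S_{n-i}$ and then $S_i=S_{n-i}^*P$ via Lemma~\ref{basiclemma}; the $\Gamma_{n-1}$-contraction clause is Lemma~\ref{lem:BSR2}.

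You instead attempt $(3)\Rightarrow(4)$, which you correctly flag as the obstacle, and the difficulty is real. Knowing $\Phi_i\ge 0$ (or even $\Phi_i=0$) does \emph{not} by itself make $\Gamma_n$ a spectral set: Theorem~\ref{thm:SC1} is a scalar statement, and the paper has no operator-level converse of this type. Your proposed workaround --- first establish enough normality of the $S_i$ from $S_i=S_{n-i}^*P$ and commutativity, then reduce fibrewise via the spectral theorem --- is essentially reproving $(3)\Rightarrow(1)$, which is exactly the nontrivial content imported from \cite{BSR}. It is not circular, but it is precisely the work the paper outsources rather than redoes.

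There is a second genuine gap at $(4)\Rightarrow(5)$. Once you have $S_i=S_{n-i}^*P$ you still need to \emph{produce} a $\Gamma_{n-1}$-unitary $(C_1,\dots,C_{n-1})$ commuting with $P$ and satisfying $S_i=C_i+C_{n-i}^*P$. Your scalar formula $c_i=(s_i-\overline{s_{n-i}}p)/(1-|p|^2)$ degenerates when $|p|=1$, as you note, and the placeholder ``$C_i=\tfrac12(S_i+\,?)$'' is not a construction. The paper avoids this entirely by proving $(2)\Rightarrow(5)$ instead: once $(S_1,\dots,S_{n-1},P)=\pi_n(U_1,\dots,U_n)$, one simply takes $(C_1,\dots,C_{n-1})=\pi_{n-1}(U_1,\dots,U_{n-1})$ and checks the identities $S_i=C_i+C_{n-i}^*P$ directly. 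Your $(5)\Rightarrow(1)$ is in the right spirit and matches the paper's $(5)\Rightarrow(3)$: set $U_n=C_{n-1}^*P$ and verify $(S_1,\dots,S_{n-1},P)=\pi_n(U_1,\dots,U_{n-1},U_n)$.
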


\begin{proof}
The equivalence of conditions $(1),(2)$ and $(3)$ were established in \cite{BSR}. We shall show here:\\

$(2)\Rightarrow (4)\Rightarrow (3)$ and $(2)\Rightarrow (5)
\Rightarrow (3)$.\\

\noindent (2) $\Rightarrow$ (4) is trivial.\\

\noindent (4) $\Rightarrow$ (3). Let $(S_1,\dots,S_{n-1},P)$ be a
$\Gamma_n$-contraction and $P$ is a unitary. Since
$(S_1,\dots,S_{n-1},P)$ is a $\Gamma_n$-contraction, by
Proposition \ref{lem:3},
\[
\Phi_{i}(\omega S_1,\dots,\omega^{n-1} S_{n-1},\omega^n P)\geq 0 \text{ for all
} \omega \in \mathbb T \;,i=1,\dots,n-1.
\]

Now $\Phi_{i}(\omega S_1,\dots,\omega^{n-1} S_{n-1},\omega^n P)\geq 0$ gives
\[
\tilde n_i^2(I-P^*P)+(S_i^*S_i-S_{n-i}^*S_{n-i})- \tilde
n_i\omega(S_i-S_{n-i}^*P)- \tilde
n_i\bar{\omega}(S_i^*-P^*S_{n-i})\geq 0,
\]
which along with the fact that $P$ is a unitary implies that
\begin{equation}\label{13}
(S_i^*S_i-S_{n-i}^*S_{n-i})- \tilde n_i\omega(S_i-S_{n-i}^*P)-
\tilde n_i\bar{\omega}(S_i^*-P^*S_{n-i})\geq 0 \quad \forall
\omega \in \mathbb T.
\end{equation}
Putting $\omega =1$ and $-1$ respectively in (\ref{13}) and adding
them up we get
\begin{equation}\label{14}
S_i^*S_i-S_{n-i}^*S_{n-i} \geq 0.
\end{equation}
Since this holds for every $i$, replacing $i$ by $n-i$ we have
$
S_{n-i}^*S_{n-i}-S_i^*S_i \geq 0.
$
Hence $S_i^*S_i=S_{n-i}^*S_{n-i}$ for each $i=1,\dots, n-1$.
Therefore from (\ref{13}) we have that Re $\omega (S_i-S_{n-i}^*P)\leq
0$ for all $\omega\in\mathbb T$. By Lemma \ref{basiclemma},
$S_i=S_{n-i}^*P$. Again since $(S_1,\dots,S_{n-1},P)$ is a
$\Gamma_n$-contraction by Lemma \ref{lem:BSR2},
$
\left(\dfrac{n-1}{n}S_1,\dfrac{n-2}{n}S_2,\dots,\dfrac{1}{n}S_{n-1} \right)
$
is a $\Gamma_{n-1}$-contraction.\\

\noindent (2)$\Rightarrow$(5). Suppose (2) holds. Then for $i=1,\dots,n-1$,
$
S_i= \sum_{1\leq k_1 \leq k_2 \dots \leq k_i \leq n}
U_{k_1}\dots U_{k_i} \quad \text{ and } \quad P=\prod_{i=1}^{n}U_i\,.
$
By the equivalence of $(1)$ and $(2)$ we have that every
$\Gamma_n$-unitary is just the symmetrization of commuting $n$
unitaries. So let us consider the $\Gamma_{n-1}$-unitary
$
(C_1,\dots,C_{n-1})=\pi_{n-1}(U_1,\dots,U_{n-1}).
$
Needless to mention that $C_1,\dots,C_{n-1}$ commute with $P$.
Note that
\begingroup
\begin{align*}
S_1 &=&\sum_{i=1}^n U_i=\sum_{i=1}^{n-1} U_i + (U_1U_2\dots U_{n-1})^*P= C_1+C_{n-1}^*P \\
S_2 & = &\sum_{1\leq i<j\leq n} U_iU_j = \sum_{1\leq i<j\leq n-1}U_iU_j + \sum_{1\leq i\leq n-1}U_iU_n = C_2+C_{n-2}^*P \\
\vdots \\
S_{n-1} & = &\sum_{1\leq k_1 < k_2 < \dots < k_{n-1} \leq n}
U_{k_1}\dots U_{k_{n-1}} \\ & = & U_1U_2\dots U_{n-1}+
(\sum_{i=1}^{n-1} U_i)^*P =C_{n-1}+C_1^*P\,.
\end{align*}
\endgroup

\noindent (5)$\Rightarrow $(3) It suffices if we prove here that
$
\left(\dfrac{n-1}{n}S_1,\dfrac{n-1}{n}S_2,\dots,\dfrac{1}{n}S_{n-1} \right)
$
is a $\Gamma_{n-1}$-contraction, because, $S_i=S_{n-i}^*P$ is
obvious for every $i=1,\dots,n-1$. Now since $(C_1,\dots,C_{n-1})$
is a $\Gamma_{n-1}$-unitary, there are commuting unitaries
$U_1,\dots,U_{n-1}$ such that
$
(C_1,\dots,C_{n-1})=\pi_{n-1}(U_1,\dots,U_{n-1}).
$
We show that $(S_1,\dots,S_{n-1},P)$ is the symmetrization of the commuting unitaries $U_1,\dots ,U_{n-1}, C_{n-1}^*P$. Let $U_n=C_{n-1}^*P=U_1^*\dots U_{n-1}^*P$. So, we have
\begin{align*}
S_1 & = C_1+C_{n-1}^*P \\
& = \sum_{i=1}^{n-1}U_i\;+\; (U_1^*\dots U_{n-1}^*)P \\
& = \sum_{i=1}^{n}U_i
\\
S_2 & =  C_2 + C_{n-2}^*P \\
& = \sum_{1\leq i<j \leq n-1} U_iU_j\;+\; \left(\sum_{1\leq k_1<k_2<\dots <k_{n-2}} U_{k_1}^*\dots U_{k_{n-2}}^* \right)P \\
& = \sum_{1\leq i<j \leq n-1} U_iU_j\;+\; \left(\sum_{i=1}^{n-1}U_i \right)U_1^*\dots U_{n-1}^*P \\
& = \sum_{1\leq i<j \leq n}U_iU_j \\
\vdots \\
\vdots \\
S_{n-1} & = C_{n-1}+C_1^*P \\
&= U_1\dots U_{n-1} \;+\; \left( \sum_{i=1}^{n-1}U_i^* \right)P \\
& =  U_1\dots U_{n-1} \;+\; +\left( \sum_{1\leq k_1<k_2<\dots <k_{n-2}\leq n-1}U_{k_1}\dots U_{k_{n-2}}  \right) U_1^*U_2^*\dots U_{n-1}^*P \\
& = U_1\dots U_{n-1} \;+\; +\left( \sum_{1\leq k_1<k_2<\dots <k_{n-2}\leq n-1}U_{k_1}\dots U_{k_{n-2}}  \right) U_n \\
& = \sum_{1\leq k_1<k_2<\dots <k_{n-1}\leq n}U_{k_1}\dots U_{k_{n-1}}\,.
\end{align*}
Thus $(S_1,\dots, S_{n-1},P)$ is a $\Gamma_n$-unitary and hence a $\Gamma_n$-contraction. So, by Lemma \ref{lem:BSR2},
\[
\left(\dfrac{n-1}{n}S_1,\dfrac{n-1}{n}S_2,\dots,\dfrac{1}{n}S_{n-1} \right)
\]
is a $\Gamma_{n-1}$-contraction. Hence the proof is complete.

\end{proof}

\subsection{A set of characterizations for the $\Gamma_n$-isometries}

\noindent We begin this subsection with a lemma whose proof is
straight-forward and could be found in \cite{tirtha-sourav}.

\begin{lem}\label{123}
Let $U\;,\;V$ be a unitary and a pure isometry on Hilbert Spaces
$\mathcal{H}_1 \;,\; \mathcal{H}_2$ respectively, and let
$X\;:\;\mathcal{H}_1 \rightarrow \mathcal{H}_2$ be such that
$XU=VX.$ Then $X=0.$
 \end{lem}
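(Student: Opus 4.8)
The plan is to use the invertibility of $U$ to propagate the intertwining relation $XU=VX$ through all powers, thereby trapping $\mathrm{Ran}\,X$ inside every power of $\mathrm{Ran}\,V$, and then to invoke purity of $V$ to collapse that intersection to $\{0\}$.

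First I would rewrite the hypothesis using the fact that $U$, being unitary, is invertible with $U^{-1}=U^*$. Applying $XU=VX$ to the vector $U^{-1}h$ gives $Xh = VX(U^{-1}h)$ for every $h\in\mathcal{H}_1$; iterating this $n$ times yields $Xh = V^{n}X(U^{-n}h)$ for all $n\geq 0$, i.e. $X = V^{n}XU^{-n}$ as operators. Consequently $\mathrm{Ran}\,X \subseteq \mathrm{Ran}\,V^{n} = V^{n}\mathcal{H}_2$ for every $n\geq 0$, so $\mathrm{Ran}\,X \subseteq \bigcap_{n\geq 0} V^{n}\mathcal{H}_2$.

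Next I would show $\bigcap_{n\geq 0} V^{n}\mathcal{H}_2 = \{0\}$, which is precisely the statement that the isometry $V$ is pure. If $\xi$ belongs to this intersection, write $\xi = V^{n}\eta_n$ for suitable $\eta_n$; applying $(V^{*})^{n}$ and using $V^{*}V=I$ gives $(V^{*})^{n}\xi = \eta_n$, and since $V^{n}$ is isometric, $\|(V^{*})^{n}\xi\| = \|\eta_n\| = \|\xi\|$ for all $n$. Because $V$ is a pure isometry, ${V^{*}}^{n}\to 0$ strongly, hence $\|(V^{*})^{n}\xi\|\to 0$, which forces $\xi = 0$. Combining with the previous step, $Xh = 0$ for every $h\in\mathcal{H}_1$, that is, $X=0$.

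The only genuine content lies in the second step, and even there the one point to be careful about is to invoke purity in the correct direction (namely ${V^{*}}^{n}\to 0$ strongly, as in the definition of a pure $\Gamma_n$-isometry) rather than a statement about $V^{n}$ itself; one could equally quote the Wold decomposition, for which $\bigcap_{n} V^{n}\mathcal{H}_2 = \{0\}$ for a pure isometry is immediate, but the short self-contained argument above suffices.
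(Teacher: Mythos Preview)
Your proof is correct and follows essentially the same idea as the paper: iterate the intertwining relation to all powers and use purity of $V$. The only cosmetic difference is that the paper passes to adjoints, obtaining $(U^*)^n X^* = X^* (V^*)^n$, observes that the invertibility of $(U^*)^n$ forces $X^*$ to vanish on $\ker (V^*)^n$, and then uses that $\bigcup_n \ker (V^*)^n$ is dense; your range-side argument $\mathrm{Ran}\,X \subseteq \bigcap_n V^n\mathcal{H}_2 = \{0\}$ is the dual formulation of exactly this, since $\ker (V^*)^n = (V^n\mathcal{H}_2)^\perp$.
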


\noindent Now we present a set of characterizations for the
$\Gamma_n$-isometries, few parts of which appeared in \cite{BSR}.

\begin{thm}\label{G-isometry}
Let $S_1,\dots,S_{n-1},P$ be commuting operators on a Hilbert space
$\mathcal H$. Then the following are equivalent:
\begin{enumerate}

\item $(S_1,\dots,S_{n-1},P)$ is a $\Gamma_n$-isometry ; \item $P$
is isometry, $S_i=S_{n-i}^*P$ for each $i=1,\dots,n-1$ and
$\left(\dfrac{n-1}{n}S_1,\dfrac{n-2}{n}S_2,\dots
,\dfrac{1}{n}S_{n-1} \right)$ is a $\Gamma_{n-1}$-contraction ;
\item $($ Wold-Decomposition $)$: there is an orthogonal
decomposition $\mathcal H=\mathcal H_1 \oplus \mathcal H_2$ into
common invariant subspaces of $S_1,\dots,S_{n-1}$ and $P$ such
that $(S_1|_{\mathcal H_1},\dots,S_{n-1}|_{\mathcal
H_1},P|_{\mathcal H_1})$ is a $\Gamma_n$-unitary and
$(S_1|_{\mathcal H_2},\dots,S_{n-1}|_{\mathcal H_2},P|_{\mathcal
H_2})$ is a pure $\Gamma_n$-isometry ; \item
$(S_1,\dots,S_{n-1},P)$ is a $\Gamma_n$-contraction and $P$ is an
isometry; \item $\left(\dfrac{n-1}{n}S_1,\dfrac{n-2}{n}S_2,\dots
,\dfrac{1}{n}S_{n-1} \right)$ is a $\Gamma_{n-1}$-contraction and
\[
\rho_{i}(\omega S_1,\dots,\omega^{n-1}S_{n-1},\omega^nP)= 0,
\]
for all $\omega\in \mathbb T$ and $\forall \; i=1,\dots,n-1$;\\

\noindent Moreover, if the spectral radius $r(S_i)$ is less than
$\tilde n_i \,(= \binom{n}{i})$ for each $i=1,\dots,n-1$ then all
of the above are equivalent to :

\item $\left(\dfrac{n-1}{n}S_1,\dots,\dfrac{1}{n}S_{n-1}\right)$
is a $\Gamma_{n-1}$-contraction and for each $i=1,\dots, n-1$,
$(\tilde n_i\beta P-S_{n-i})(\tilde n_i-\beta S_i)^{-1}$ is an
isometry for all $\beta\in\mathbb T$.

\end{enumerate}

\end{thm}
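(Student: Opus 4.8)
The plan is to establish the cycle $(1)\Rightarrow(4)\Rightarrow(2)\Rightarrow(3)\Rightarrow(1)$ among the first four conditions, then to deduce $(5)$ and $(6)$ separately, leaning as much as possible on the scalar-point characterizations of Section \ref{scalarcase} and on Theorem \ref{G-unitary}. The implication $(1)\Rightarrow(4)$ is immediate: a $\Gamma_n$-isometry is by definition the restriction of a $\Gamma_n$-unitary to a joint invariant subspace, restrictions of contractive (indeed completely contractive) maps stay contractive so $\Gamma_n$ remains a spectral set, and the last coordinate of a $\Gamma_n$-unitary is unitary by Theorem \ref{G-unitary}(4), hence its restriction to an invariant subspace is an isometry. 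For $(4)\Rightarrow(2)$ I would mimic the proof of $(4)\Rightarrow(3)$ in Theorem \ref{G-unitary}: since $(S_1,\dots,S_{n-1},P)$ is a $\Gamma_n$-contraction, Proposition \ref{prop:POS} gives $\Phi_i(\omega S_1,\dots,\omega^n P)\ge 0$ for all $\omega\in\mathbb T$; expanding via \eqref{eq:1a} and using $P^*P=I$ kills the $n^2(I-P^*P)$ term, and the same trick of averaging the $\omega=1$ and $\omega=-1$ inequalities (and then swapping $i\leftrightarrow n-i$) forces $S_i^*S_i=S_{n-i}^*S_{n-i}$, whence $\mathrm{Re}\,\omega(S_i-S_{n-i}^*P)\le 0$ for all $\omega\in\mathbb T$ and Lemma \ref{basiclemma} yields $S_i=S_{n-i}^*P$; the $\Gamma_{n-1}$-contractivity of the scaled tuple is then Lemma \ref{lem:BSR2}.

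The core of the theorem is the Wold-type decomposition $(2)\Rightarrow(3)$, and this is where I expect the main work to be. Given that $P$ is an isometry, apply the classical Wold decomposition $\mathcal H=\mathcal H_1\oplus\mathcal H_2$ where $P|_{\mathcal H_1}$ is unitary, $P|_{\mathcal H_2}$ is a pure isometry, and both summands reduce $P$. The key point is that $\mathcal H_1=\bigcap_{m\ge 0}P^m\mathcal H$ and $\mathcal H_2=\mathcal H\ominus P\mathcal H\,\oplus\,P(\mathcal H\ominus P\mathcal H)\oplus\cdots$ are both invariant (in fact reducing) for each $S_i$: since the $S_i$ commute with $P$, they commute with $P^m$, so $S_i\mathcal H_1\subseteq\mathcal H_1$, and using $S_i=S_{n-i}^*P$ together with $S_{n-i}^*$ commuting with $P^*$ one checks $S_i^*\mathcal H_1\subseteq\mathcal H_1$ as well. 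Then $(S_1|_{\mathcal H_1},\dots,P|_{\mathcal H_1})$ is a $\Gamma_n$-contraction (restriction to a joint invariant subspace) with unitary last coordinate, so it is a $\Gamma_n$-unitary by Theorem \ref{G-unitary}(4)$\Rightarrow$(1), and $(S_1|_{\mathcal H_2},\dots,P|_{\mathcal H_2})$ satisfies $(2)$ with $P|_{\mathcal H_2}$ a pure isometry, so it is a pure $\Gamma_n$-isometry once we know it is a $\Gamma_n$-isometry at all. For $(3)\Rightarrow(1)$ I would argue that a pure $\Gamma_n$-isometry is a $\Gamma_n$-isometry—this is essentially the content of the Biswas--Shyam Roy construction of the minimal $\Gamma_n$-unitary extension of the shift-type model, which I may invoke—and a direct sum of $\Gamma_n$-isometries is a $\Gamma_n$-isometry (take the direct sum of the corresponding $\Gamma_n$-unitary extensions).

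Finally, for the two remaining equivalences: $(2)\Leftrightarrow(5)$ should follow by recognizing that the vanishing of $\rho_i(\omega S_1,\dots,\omega^n P)$—here $\rho_i$ must be the ``fundamental-equation residue'' $\rho_i(S_1,\dots,P)=n(I-P^*P)-(S_i-S_{n-i}^*P)-(S_i-S_{n-i}^*P)^*$ or its square (the paper should have defined it just before this theorem), analogous to $\Phi_i$ but of degree one—is, when $P$ is constrained by the $\Gamma_{n-1}$-contraction hypothesis to be an isometry, exactly equivalent to $S_i=S_{n-i}^*P$; the argument is the same $\omega=\pm1$ averaging plus Lemma \ref{basiclemma}. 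For $(2)\Leftrightarrow(6)$ under the extra hypothesis $r(S_i)<n$, the operator $n-\beta S_i$ is invertible for $\beta\in\mathbb T$, and I would show that $(n\beta P-S_{n-i})(n-\beta S_i)^{-1}$ being an isometry for every $\beta\in\mathbb T$ is equivalent, after clearing the inverse, to the operator identity $(n\beta P-S_{n-i})^*(n\beta P-S_{n-i})=(n-\beta S_i)^*(n-\beta S_i)$ for all $\beta\in\mathbb T$, i.e.\ $\Phi_i(\beta S_1,\dots,\beta^n P)=0$ for all $\beta\in\mathbb T$; combined with the $\Gamma_{n-1}$-contraction hypothesis this gives $P$ isometric and $S_i=S_{n-i}^*P$ by the now-familiar averaging argument, returning us to $(2)$. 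The main obstacle throughout is bookkeeping in $(2)\Rightarrow(3)$—verifying that the Wold summands genuinely reduce all of $S_1,\dots,S_{n-1}$ and that the pure piece really admits a $\Gamma_n$-unitary extension—rather than any single hard inequality.
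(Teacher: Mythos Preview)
Your strategy for $(1)\Rightarrow(4)$, for $(4)\Rightarrow(2)$ via Proposition~\ref{prop:POS} plus the $\omega=\pm 1$ averaging and Lemma~\ref{basiclemma}, and for the equivalence with $(6)$ by clearing the inverse to recover $\Phi_i=0$, is exactly what the paper does. The paper, however, does \emph{not} prove $(2)\Rightarrow(3)\Rightarrow(1)$ at all: it simply cites Theorem~4.12 of \cite{BSR} for the equivalence of $(1),(2),(3),(5)$ and then fills in only $(1)\Rightarrow(4)\Rightarrow(5)$ and $(5)\Leftrightarrow(6)$. So your Wold-decomposition argument for $(2)\Rightarrow(3)$ is more than the paper supplies; it is correct (the check that $\mathcal H_1$ reduces each $S_i$ via $S_i=S_{n-i}^*P$ is fine), but note that you still end up invoking \cite{BSR} for the pure-isometry piece, which is precisely the substantive content the paper outsources wholesale.

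One correction: your guess about $\rho_i$ is off. From the paper's own proof of $(4)\Rightarrow(5)$ and $(5)\Leftrightarrow(6)$ it is clear that $\rho_i$ is a typo for $\Phi_i$ (the degree-two pencil of \eqref{eq:1a}), not a new degree-one expression. Your averaging argument still works with $\Phi_i$; in fact it works better, since from $\Phi_i=0$ and $\Phi_{n-i}=0$ the sum immediately gives $2n^2(I-P^*P)=0$, recovering that $P$ is an isometry in the $(6)\Rightarrow(2)$ direction without any side hypothesis.
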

\begin{proof}
The equivalence of $(1),(2),(3)$ and $(5)$ was shown in Theorem
4.12 in \cite{BSR}. We prove here $(1)\Rightarrow (4)\Rightarrow
(5)$ and when $r(S_i)< \tilde n_i$ for all $i$, then
$(5)\Leftrightarrow (6)$.\\

\noindent $(1)\Rightarrow (4)$ Since $(S_1,\dots,S_{n-1},P)$ is a
$\Gamma_n$-isometry, it is the restriction of a $\Gamma_n$-unitary
say $(\tilde S_1,\dots, \tilde S_{n-1}, \tilde P)$ to a common
invariant subspace of $\tilde S_1,\dots, \tilde S_{n-1}$ and
$\tilde P$. Also since $\tilde P$ is a unitary, its restriction to
an invariant subspace is an isometry. Therefore, $P$ is an
isometry. Again $(S_1,\dots,S_{n-1},P)$, being the restriction of
the $\Gamma_n$-contraction $(\tilde S_1,\dots, \tilde S_{n-1},
\tilde P)$ to a common invariant subspace of $\tilde S_1,\dots,
\tilde S_{n-1}, \tilde P $, is a $\Gamma_n$-contraction.\\

\noindent $(4)\Rightarrow (5)$ Since $(S_1,\dots,S_{n-1},P)$ is a
$\Gamma_n$-contraction, by Lemma \ref{lem:BSR2},
\[
\left(\dfrac{n-1}{n}S_1,\dfrac{n-2}{n}S_2,\dots ,\dfrac{1}{n}S_{n-1} \right)
\]
is a $\Gamma_{n-1}$-contraction. Again since
$(S_1,\dots,S_{n-1},P)$ is a $\Gamma_n$-contraction, by
Proposition \ref{lem:3},
\[
\Phi_{i}(\beta S_1,\dots, {\beta}^{n-1}S_{n-1}, {\beta}^nP)\geq 0\,,
\]
for $i=1,\dots,n-1$, and for all $\beta\in\mathbb T$.
So, we have
\[
\tilde n_i^2(I-P^*P)+(S_i^*S_i-S_{n-i}^*S_{n-i})-2 \tilde n_i
\text{ Re } \beta (S_i-S_{n-i}^*P) \geq 0.
\]
Using the fact that $P^*P=I$ we have
\[
(S_i^*S_i-S_{n-i}^*S_{n-i}) - \text{Re } \beta^i (S_1-S_2^*P)\geq 0 \text{ for all }
\beta\in\mathbb T.
\]
Choosing $\beta^i=1$ and $-1$ respectively we obtain from the above inequality
\[
(S_i^*S_i-S_{n-i}^*S_{n-i}) \geq 0.
\]
Similarly considering the positivity of the operator pencil $\Phi_{n-i}$ and repeating the same procedure we obtain
\[
(S_i^*S_i-S_{n-i}^*S_{n-i}) \leq 0.
\]
Therefore $S_i^*S_i=S_{n-i}^*S_{n-i}$ and hence
\[
\text{Re } \beta^i (S_1-S_2^*P) \leq 0 \quad \text{ for all } \beta \in\mathbb T.
\]
We apply \ref{basiclemma} to get $S_i=S_{n-i}^*P$ for $i=1,\dots,n-1$.
Accumulating all these facts together we conclude that
\[
\Phi_{i}(\beta S_1,\dots, {\beta}^{n-1}S_{n-1}, {\beta}^nP) =0\,,
\]
for all $\beta$ with unit modulus and $i=1,\dots,n-1$.
\\

\noindent $(5)\Rightarrow (6)$ For every $i=1,\dots,n-1$ and for all $\beta\in\mathbb T$, we have
\begin{align*}
& \Phi_{i}(\beta S_1,\dots, {\beta}^{n-1}S_{n-1}, {\beta}^nP)\\
& = (\tilde n_i -\beta^i S_i)^*(\tilde n_i - \beta^i S_i)-(\tilde
n_i{\beta}^nP-{\beta}^{n-i}S_{n-i})^*(\tilde
n_i{\beta}^nP-{\beta}^{n-i}S_{n-i})
\\& =0\,.
\end{align*}
This implies that
\[
(\tilde n_i-\beta^i S_i)^*(\tilde n_i-\beta^i S_i)=(\tilde
n_i{\beta}^nP-{\beta}^{n-i}S_{n-i})^*(\tilde
n_i{\beta}^nP-{\beta}^{n-i}S_{n-i}).
\]
Since $r(S_i)< \tilde n_i$ , $\tilde n_i-\beta^i S_i$ is
invertible and so we have
\[
((\tilde n_i-\beta^i S_i)^{-1})^*(\tilde
n_i{\beta}^nP-{\beta}^{n-i}S_{n-i})^*(\tilde
n_i{\beta}^nP-{\beta}^{n-i}S_{n-i})(\tilde n_i-\beta^i
S_i)^{-1}=I.
\]
Therefore, $(\tilde n_i{\beta}^nP-{\beta}^{n-i}S_{n-i})(\tilde
n_i-\beta^i S_i)^{-1}$ is an isometry and hence $(\tilde
n_i{\beta}^iP-S_{n-i})(\tilde n_i-\beta^i S_i)^{-1}$ is an
isometry. This is same as saying that $(\tilde
n_i{\nu}P-S_{n-i})(\tilde n_i-\nu S_i)^{-1}$ is an isometry for
all
$\nu\in\mathbb T$ and for all $i=1,\dots,n-1$.\\

Conversely, let $(5)$ holds. Then $(\tilde n_i{\beta}P-S_2)(\tilde
n_i-\beta S_i)^{-1}$ is an isometry for all $\beta\in\mathbb T$
and for each $i=1,\dots,n-1$. Therefore,
\[
((\tilde n_i-\beta^i S_i)^{-1})^*(\tilde
n_i{\beta}^nP-{\beta}^{n-i}S_{n-i})^*(\tilde
n_i{\beta}^nP-{\beta}^{n-i}S_{n-i})(\tilde n_i-\beta^i S_i)^{-1}=I
\]
or equivalently for all $\beta\in\mathbb T$,
\begin{align*}
& \Phi_{i}(\beta S_1,\dots,{\beta}^{n-1}S_{n-1},{\beta}^nP) \\
& = (\tilde n_i-\beta^i
S_i)^*(\tilde n_i-\beta^{i}S_i)-(\tilde n_i{\beta}^nP-{\beta}^{n-i}S_{n-i})^*(\tilde n_i{\beta}^nP-{\beta}^{n-i}S_{n-i}) \\
& =0.
\end{align*}
This completes the proof.
\end{proof}

\subsection{A model theorem for pure $\Gamma_n$-isometry}

Wold-decomposition splits an isometry into two
orthogonal parts namely, a unitary and a pure isometry (\cite{nagy}, Chapter-I). A pure isometry $V$ is unitarily equivalent to the Toeplitz operator $T_z$ on the vectorial Hardy space $H^2(\mathcal D_{V^*})$. We have in Theorem \ref{G-isometry}, an analogous Wold-decomposition for $\Gamma_n$-isometries in terms of
a $\Gamma_n$-unitary and a pure $\Gamma_n$-isometry. Again Theorem \ref{G-unitary} shows that every $\Gamma_n$-unitary is nothing but the symmetrization of commuting $n$ unitaries. Therefore a model for pure $\Gamma_n$-isometries gives a complete picture of a $\Gamma_n$-isometry. In \cite{BSR}, Biswas and Shyam
Roy have provided such a model in terms of Toeplitz operator tuple $(T_{\phi_1},\dots, T_{\phi_{n-1}},T_z)$ on an abstract Hardy-Hilbert space (see Theorem 4.10 in \cite{BSR}). Here we make a refinement of their result by specifying the Hilbert space and the corresponding Toeplitz operators.

\begin{thm}\label{model1}
Let $(\hat{S_1},\dots,\hat{S}_{n-1},\hat{P})$ be a commuting
triple of operators on a Hilbert space $\mathcal H$. If
$(\hat{S_1},\dots,\hat{S}_{n-1},\hat{P})$ is a pure
$\Gamma_n$-isometry then there is a unitary operator $U:\mathcal H
\rightarrow H^2(\mathcal D_{{\hat{P}}^*})$ such that
\[
\hat{S}_i=U^*T_{\varphi_i}U, \text{ for } i=1,\dots,n-1\,, \text{
and }  \hat{P}=U^*T_zU\,.
\]
Here each $T_{\varphi_i}$ is the Toeplitz operator on the
vectorial Hardy space $H^2(\mathcal D_{{\hat{P}}^*})$ with the
symbol $\varphi_i(z)= F_i^*+F_{n-i}z$, where $(F_1,\dots,F_{n-1})$
is the fundamental operator tuple of
$(\hat{S}_1^*,\dots,\hat{S}_{n-1}^*,\hat{P}^*)$ such that
$$\left(
\frac{n-1}{n}(F_1^*+F_{n-1}z),\frac{n-2}{n}(F_2^*+F_{n-2}z),\dots,
\frac{1}{n}(F_{n-1}^*+F_1z) \right)$$ is a
$\Gamma_{n-1}$-contraction for every $z\in \mathbb T$.\\

Conversely, if $F_1,\dots,F_{n-1}$ are bounded operators on a
Hilbert space $E$ such that $$\left(
\frac{n-1}{n}(F_1^*+F_{n-1}z),\frac{n-2}{n}(F_2^*+F_{n-2}z),\dots,
\dfrac{1}{n}(F_{n-1}^*+F_1z) \right)$$ is a
$\Gamma_{n-1}$-contraction for every $z\in \mathbb T$, then
$(T_{F_1^*+F_{n-1}z},\dots,T_{F_{n-1}^*+F_1z},T_z)$ on $H^2(E)$ is
a pure $\Gamma_n$-isometry.
\end{thm}

\begin{proof}

Suppose that $(\hat{S}_1,\dots,\hat{S}_{n-1},\hat{P})$ is a pure
$\Gamma_n$-isometry. Then $\hat{P}$ is a pure isometry and it can
be identified with the Toeplitz operator $T_z$ on $H^2(\mathcal
D_{{\hat{P}}^*})$. Therefore, there is a unitary $U$ from
$\mathcal H$ onto $H^2(\mathcal D_{{\hat{P}}^*})$ such that
$\hat{P}=U^*T_zU$. Since for $\hat{S}_i$ is a commutant of
$\hat{P}$, there are multipliers $\phi_i$ in $H^{\infty}(\mathcal
B(\mathcal D_{{\hat{P}}^*}))$ such that $\hat{S}_i=U^*T_{\phi_i}U$
for $i=1,\dots, n-1$.

Since $(T_{\phi_1},\dots,T_{\phi_{n-1}},T_z)$ is a
$\Gamma_n$-isometry, by part-(3) of Theorem \ref{G-isometry}, we have
that
\[
T_{\phi_i}=T_{\phi_{n-i}}^*T_z \text{ for } i=1,\dots,n-1\,,
\]
and by these relations we have
\[
\phi_i(z)=G_i+G_{n-i}z \textup{ and }
\phi_{n-i}(z)=G_{n-i}^*+G_i^*z \textup{ for some } G_i,G_{n-i}
\in\mathcal B(\mathcal D_{{\hat{P}}^*}).
\]
Set ${F_i}= G_i^*$ and ${F_{n-i}}=G_{n-i}$ for each $i$. Again
since $(T_{\phi_1},\dots,T_{\phi_{n-1}},T_z)$ is a
$\Gamma_n$-isometry, by Lemma \ref{lem:BSR2}, $
(\frac{n-1}{n}T_{\phi_1},\dots,\frac{1}{n}T_{\phi_{n-1}}) $ is a
$\Gamma_{n-1}$-contraction and hence
$(\frac{n-1}{n}\phi_1,\dots,\frac{1}{n}\phi_{n-1})$, that is,
\[
\left(
\frac{n-1}{n}(F_1^*+F_{n-1}z),\frac{n-2}{n}(F_2^*+F_{n-2}z),\dots,
\frac{1}{n}(F_{n-1}^*+F_1z)
\right)
\]
is a $\Gamma_{n-1}$-contraction for all $z$ in $\mathbb T$.\\

For the converse part, note that
\[
\left(
\frac{n-1}{n}(F_1^*+F_{n-1}z),\frac{n-2}{n}(F_2^*+F_{n-2}z),\dots,
\frac{1}{n}(F_{n-1}^*+F_1z)
\right)
\]
is a $\Gamma_{n-1}$-contraction. So, the fact that
$(T_{F_1^*+F_{n-1}z},\dots,T_{F_{n-1}^*+F_1z},T_z)$ on $H^2(E)$ is
a $\Gamma_n$-isometry follows from part-(2) of Theorem
\ref{G-isometry}. Needless to mention that $T_z$ on $H^2(E)$ is a
pure isometry which makes
$(T_{F_1^*+F_{n-1}z},\dots,T_{F_{n-1}^*+F_1z},T_z)$ a pure
$\Gamma_n$-isometry.

\end{proof}

\begin{rem}\label{partial-converse}
Consider the adjoint of the pure $\G$-isometry $(T_{F_1^*+F_{n-1}z},\dots,T_{F_{n-1}^*+F_1z},T_z)$. It can be easily verified that $F_1,\dots, F_{n-1}$ satisfy the operator equations
\[
T_{F_i^*+F_{n-i}z}^*-T_{F_{n-i}^*+F_{i}z}T_z^*=D_{T_z^*}F_iD_{T_z^*} \quad 1\leq i \leq n-1.
\]
Thus for given operators $F_1,\dots, F_{n-1}$ on a Hilbert space on $E$ if
\[
\left(
\frac{n-1}{n}(F_1^*+F_{n-1}z),\frac{n-2}{n}(F_2^*+F_{n-2}z),\dots,
\frac{1}{n}(F_{n-1}^*+F_1z) \right)
\]
is a $\Gamma_{n-1}$-contraction, then $(F_1, \dots , F_{n-1})$ is the $\ft$-tuple of a $\G$-contraction which can be chosen to be the $\Gamma_n$-co-isometry $(T_{F_1^*+F_{n-1}z}^*,\dots,T_{F_{n-1}^*+F_1z}^*,T_z^*)$ acting on $H^2(E)$.

\end{rem}

\vspace{0.4cm}

\vspace{0.4cm}

\section{An explicit $\Gamma_n$-unitary dilation on the minimal dilation space}\label{conditional-dilation}

\vspace{0.4cm}

\noindent We have already mentioned that Sz.-Nagy's unitary dilation of a contraction was a pioneering work in the field of operator theory. This has deep interrelation with the commutant lifting and interpolations, see e.g. \cite{F-F1}. Moreover, when such a dilation is minimal, it provides a commutant lifting on a minimal space. Interestingly if a unitary dilation of a contraction is minimal, then it is unique in the sense that any two minimal unitary dilations of a contraction are unitarily equivalent. Thus, without loss of generality one can choose anyone of the minimal dilations of a contraction. Of the minimal unitary dilations the most appealing one is the Schaeffer's unitary dilation of a contraction (see \cite{nagy}) which gives an explicit construction of a minimal unitary dilation of a contraction. For a contraction $P$ on $\HS$, Schaeffer chose the space $\mathcal K =l^2(\mathcal D_P) \oplus \HS \oplus \mathcal D_{P^*}$ and the unitary operator as in (\ref{2.33}) acting on $\mathcal K$ to be the minimal unitary dilation of a $P$. Constructing a minimal dilation in multivariable setting has always been a challenging task. In \cite{tirtha-sourav}, the author and his collaborators have constructed a minimal $\Gamma_2$-unitary dilation for a $\Gamma_2$-contraction. In this Section, we shall construct $\G$-unitary dilation for a certain class of a $\G$-contraction $(S_1, \dots , S_{n-1},P)$. There are three notable facts about this $\G$-unitary dilation:
\begin{itemize}
\item[(i)] the dilation will act on the minimal unitary dilation space $\mathcal K$ of $P$;

\item[(ii)] it gives a complete characterization of the class of $\G$-contractions which admit a $\G$-unitary dilation on $\mathcal K$;

\item[(iii)] it generalizes the $\Gamma_2$-unitary dilation of a $\Gamma_2$-contraction obtained in \cite{sourav}.
\end{itemize}
Recall that a rational dilation on $\G$ is nothing but a $\G$-unitary dilation of a $\G$-contraction. By virtue of polynomial convexity of $\G$, it suffices to restrict the functional calculus to the algebra of polynomials, more specifically to the monomials.
\begin{defn} Let $(S_1,\dots,S_{n-1},P)$ be a
$\Gamma_n$-contraction on $\mathcal H$. A commuting tuple of operators $(R_1,\dots,R_{n-1},U)$ acting on $\widetilde{\HS}$ is said to be a
$\Gamma_n$-unitary dilation (or a $\Gamma_n$-isometric dilation) of $(S_1,\dots,S_{n-1},P)$ if $\mathcal H \subseteq \widetilde{\HS}$, $(R_1,\dots,R_{n-1},U)$ is a $\Gamma_n$-unitary (or a $\Gamma_n$-isometry) and
\[
P_{\mathcal H}(R_1^{m_1}\dots R_{n-1}^{m_{n-1}}U^{n})|_{\mathcal
H} =S_1^{m_1}\dots S_{n-1}^{m_{n-1}}P^{n},
\]
for all non-negative integers $ m_1, \dots, m_{n-1},n.$ Moreover, such a dilation is called {\em minimal} if
\[
\widetilde{\HS}=\overline{\textup{span}}\{ R_1^{m_1}\dots
R_{n-1}^{m_{n-1}}U^n h\,:\; h\in\mathcal H \textup{ and
}m_1,\dots, m_{n-1},n\in \mathbb N \cup \{0\} \}.
\]
\end{defn}
Throughout this section, we shall use the well-known fact that for any contraction $P$,
\begin{equation}\label{nagy-foias}
PD_P=D_{P^*}P.
\end{equation}
A proof to this could be found in \cite{nagy}. Also, we shall apply frequently the following operator identities
\begin{align}\label{funda-repeat}
& S_i-S_{n-i}^*P= D_{P}F_iD_{P}\,,
\\& \label{funda-repeat1} S_i^*-S_{n-i}P^*=
D_{P^*}{F_i}_*D_{P^*}, \qquad (1 \leq i \leq n-1)
\end{align}
whence $(A_1,\dots,A_{n-1})$ and $(B_1,\dots,B_{n-1})$ are the $\mathcal F$-tuples of $(S_1,\dots,S_{n-1},P)$ and its adjoint respectively. Before going to the main dilation theorem we present a lemma which will be useful.

\begin{lem}\label{funda-properties}
Let $(S_1,\dots,S_{n-1},P)$ be a $\Gamma_n$-contraction on a
Hilbert space $\mathcal H$ and let $(A_1,\dots,A_{n-1})$ and
$(B_1,\dots, B_{n-1})$ be the $\mathcal F$-tuples of
$(S_1,\dots,S_{n-1},P)$ and $(S_1^*,\dots,S_{n-1}^*,P^*)$ respectively. Then

\begin{enumerate}
\item $PA_i={B_i}^*P|_{\mathcal D_{P}}$ and
$P^*B_i={A_i}^*P^*|_{\mathcal D_{P^*}}$ for $i=1,\dots, n-1$;

\item $D_{P}S_i=A_iD_{P}+A_{n-i}^*D_{P}P$ for $i=1,\dots, n-1$;

\item $D_PA_i=(S_iD_P-D_{P^*}B_{n-i}P)|{\mathcal{D}_P}$ and $D_{P^*}B_i=(S_i^*D_{P^*}-D_{P}A_{n-i}P^*)|{\mathcal{D}_{P^*}}$, for $i=1,\dots, n-1$;

\item $S_iD_{P^*}=D_{P^*}B_i^*+PD_{P^*}{B_{n-i}}$ for $i=1,\dots,
n-1$;

\item $S_i^*S_j-S_{n-j}^*S_{n-i}=D_{P}(A_i^*A_j-A_{n-j}^*A_{n-i})D_{P}$, for any $i,j\in \{1,\dots, n-1 \}$, when $[A_j,A_{n-i}]=0$; 

\item ${S_i}S_j^*-S_{n-j}S_{n-i}^*=D_{P^*}(B_i^*B_j-B_{n-j}^*B_{n-i})D_{P^*}$, for any $i,j\in \{1,\dots, n-1 \}$, when $[{B_j},{B_{n-i}}]=0$.
\end{enumerate}
\end{lem}

\begin{proof} The proof is technical and is given in the 'Appendix'.

\end{proof}

\noindent Now we are in a position to present the construction of the desired $\Gamma_n$-unitary dilation which is one of the main results of this paper.

\begin{thm}\label{main-dilation-theorem}
 Let $(S_1,\dots ,S_{n-1},P)$ be a $\Gamma_n$-contraction defined on a Hilbert space $\mathcal
 H$ with $(A_1,\dots,A_{n-1})$ and $(B_{1},\dots,B_{{n-1}})$
 being the $\ft$-tuples of $(S_1,\dots,S_{n-1},P)$ and $(S_1^*,\dots ,S_{n-1}^*,P^*)$ respectively. Let $\mathcal K$ be the minimal unitary dilation space of $P$. Then $(S_1, \dots , S_{n-1},P)$ possesses a $\Gamma_n$-unitary dilation $(R_1, \dots, R_{n-1},U)$ on $\mathcal K$ with $U$ being the minimal unitary dilation of $P$ if and only if
 \[
 \left(
\frac{n-1}{n}(A_1+A_{n-1}^*z),\frac{n-2}{n}(A_2+A_{n-2}^*z),\dots,
\dfrac{1}{n}(A_{n-1}+A_1^*z) \right)
\]
and
\[
\left(
\frac{n-1}{n}(B_{1}^*+B_{n-1}z),\frac{n-2}{n}(B_{2}^*+B_{n-2}z),\dots,
\dfrac{1}{n}(B_{n-1}^*+B_{1}z) \right)
\]
are $\Gamma_{n-1}$-contraction for every $z\in \T$. Moreover, this $\Gamma_n$-unitary dilation on $\mathcal K$ is unique and minimal.

\end{thm}

\begin{proof}

\textbf{\textit{The} $( \Leftarrow )$ \textit{part}.}\\

\noindent Suppose that
\[
\Sigma_1 = \left(
\frac{n-1}{n}(A_1+A_{n-1}^*z),\frac{n-2}{n}(A_2+A_{n-2}^*z),\dots,
\dfrac{1}{n}(A_{n-1}+A_1^*z) \right)
\]
and
\[
\Sigma_2= \left(
\frac{n-1}{n}(B_{1}^*+B_{n-1}z),\frac{n-2}{n}(B_{2}^*+B_{n-2}z),\dots,
\dfrac{1}{n}(B_{n-1}^*+B_{1}z) \right)
\]
are $\Gamma_{n-1}$-contraction for every $z\in \T$. We show an explicit construction of a $\Gamma_n$-unitary on $\mathcal K$ that dilates $(S_1, \dots , S_{n-1},P)$. Since the minimal unitary dilation space of a contraction is unique upto a unitary (see Chapter-I of \cite{nagy}), without loss of generality let us choose $\mathcal K$ to be the minimal Schaeffer unitary dilation space, that is,
\[
\mathcal K=\cdots\oplus\mathcal D_{P}\oplus\mathcal D_{P}\oplus\mathcal D_{P}\oplus\mathcal H\oplus
 \mathcal D_{P^*}\oplus\mathcal D_{P^*}\oplus\mathcal D_{P^*}\oplus\cdots =l^2(\mathcal D_P)\oplus \mathcal H \oplus l^2(\mathcal D_{P^*}).
\]
Let us define $(R_1,\dots,R_{n-1},U)$ on $\mathcal K$ by
\begin{eqnarray}\label{2.3}
&R_i =\footnotesize \left[
\begin{array}{ c c c c|c|c c c c}
\bm{\ddots}&\vdots &\vdots&\vdots   &\vdots  &\vdots& \vdots&\vdots&\vdots\\
\cdots&0&A_i&A_{n-i}^*  &0&  0&0&0&\cdots\\
\cdots&0&0&A_i  &A_{n-i}^*D_{P}&  -A_{n-i}^*P^*&0&0&\cdots\\
\hline

\cdots&0&0&0   &S_i&   D_{P^*}{B_{n-i}}&0&0&\cdots\\ \hline

\cdots&0&0&0   &0&  {B_i}^*& {B_{n-i}}&0&\cdots\\
\cdots&0&0&0   &0&  0&{B_i}^*&{B_{n-i}}&\cdots\\
\vdots&\vdots&\vdots&\vdots&\vdots&\vdots&\vdots&\vdots&\bm{\ddots}\\
\end{array} \right]\,,\\&
\notag 1\leq i \leq n-1\,,
\\&
\text{ \large and }\quad U = \left[
\begin{array}{ c c c c|c|c c c c}
\bm{\ddots}&\vdots &\vdots&\vdots   &\vdots  &\vdots& \vdots&\vdots&\vdots\\
\cdots&0&0&I  &0&  0&0&0&\cdots\\
\cdots&0&0&0  &D_{P}&  -{P}^*&0&0&\cdots\\ \hline

\cdots&0&0&0   &P&   D_{P^*}&0&0&\cdots\\ \hline

\cdots&0&0&0   &0&  0& I&0&\cdots\\
\cdots&0&0&0   &0&  0&0&I&\cdots\\
\vdots&\vdots&\vdots&\vdots&\vdots&\vdots&\vdots&\vdots&\bm{\ddots} \label{2.33}\\
\end{array} \right].
\end{eqnarray}
Note that we have chosen $U$ to be the minimal Schaeffer unitary dilation of $P$. We show that $(R_1, \dots, R_{n-1},U)$ is a $\Gamma_n$-unitary dilation of $(S_1, \dots , S_{n-1},P)$. Suppose the block matrix of $R_i$ for $i=1, \dots , n-1$ and $U$ with respect to the decomposition $l^2(\mathcal
D_{P})\oplus \mathcal H \oplus l^2(\mathcal D_{P^*})$ of $\mathcal
K$ are
\begin{equation}\label{sec:eqn-011}
\left[
\begin{array}{ccc}
G_{i1} & G_{i2} & G_{i3}\\
0 & S_i & G_{i4}\\
0& 0& G_{i5}  \end{array} \right] \text{ and } \left[
\begin{array}{ccc}
Q_{1} & Q_{2} & Q_{3}\\
0 & P & Q_{4}\\
0& 0& Q_{5}  \end{array} \right]
\end{equation}
respectively. If $R_1,\dots,R_{n-1},U$ commute, then it is obvious
from the upper triangular forms of the block matrices of
$R_1,\dots,R_{n-1},U$ that
\[
 P_{\mathcal H}(R_1^{m_1}\dots
R_{n-1}^{m_{n-1}}U^m)|_ {\mathcal H}=S_1^{m_1}\dots
S_{n-1}^{m_{n-1}}P^m\,,
\]
for all integers $m_1,\dots, m_{n-1},m$. This proves that
$(R_1,\dots, R_{n-1},U)$ dilates $(S_1,\dots,S_{n-1},P)$. The
minimality of the dilation follows from the fact that $\mathcal K$
and $U$ are respectively the minimal unitary dilation space and
minimal unitary dilation of $P$. Therefore, in order to prove that
$(R_1,\dots,R_{n-1},U)$ is a minimal $\Gamma_n$-unitary dilation
of $(S_1,\dots,S_{n-1},P)$, all we need to show is that
$(R_1,\dots,R_{n-1},U)$ is a $\Gamma_n$-unitary. By Theorem
\ref{G-unitary}, it is enough if we show that
$(R_1,\dots,R_{n-1},U)$ is a $\Gamma_n$-contraction because $U$ is
unitary. Again by virtue of polynomial convexity of $\Gamma_n$, it
suffices to prove

\begin{enumerate}
\item $R_1,\dots , R_{n-1},U$ commute and \item $\| f(R_1,\dots ,
R_{n-1},U) \| \leq \| f \|_{\infty , \Gamma_n}$ ,  for all $f \in
\mathbb C[z_1,\dots, z_n].$
\end{enumerate}

By the commutativity of the tuples $\Sigma_1$ and $\Sigma_2$ for all $z\in \T$, we have that
\begin{align}\label{rel:01}
& (1)\; A_iA_j=A_jA_i \;, \quad B_{i}B_{j}=B_{j}B_{i}, \notag \\
&(2)\; [A_i^*,A_{n-j}]=[A_j^*,A_{n-i}] \;, \quad
[B_{i}^*,B_{{n-j}}]=[B_{j}^*,B_{{n-i}}].
\end{align}

\noindent \textit{Step 1.} Note that $R_iR_j =$
\begin{equation*}
\tiny \left[
\begin{array}{ c c c|c|c c c }
\bm{\ddots} &\vdots&\vdots   &\vdots  &\vdots& \vdots&\vdots\\
\cdots & A_iA_j& A_iA_{n-j}^*+A_{n-i}^*A_j &A_{n-i}^*A_{n-j}^*D_{P}&  -A_{n-i}^*A_{n-j}^*P&0&\cdots\\
\cdots&0&A_iA_j  & {\normalsize \substack{A_iA_{n-j}^*D_{P}\\
+A_{n-i}^*D_{P}S_j}}&
{\normalsize \substack{-A_iA_{n-j}^*P^*+A_{n-i}^*D_{P}D_{P^*}{B_{n-j}}\\
-A_{n-i}^*P^*{B_j}^*}} &-A_{n-i}^*P^*{B_{n-j}}&\cdots\\
\hline

\cdots&0&0 &S_iS_j&
S_iD_{P^*}{B_{n-j}}+D_{P^*}{B_{n-i}}{B_j}^*&D_{P^*}{B_{n-i}}{B_{n-j}}&\cdots\\
\hline

\cdots&0&0 &0&  {B_i}^*{B_j}^*& {B_i}^*{B_{n-j}}+{B_{n-i}}{B_j}^*&\cdots\\
\cdots &0&0 &0&  0&{B_i}^*{B_j}^*& \cdots\\
\vdots &\vdots&\vdots&\vdots&\vdots&\vdots &\bm{\ddots}\\
\end{array} \right]
\end{equation*}
and $R_jR_i =$
\begin{equation*}
\tiny \left[
\begin{array}{ c c c|c|c c c }
\bm{\ddots} &\vdots&\vdots   &\vdots  &\vdots& \vdots&\vdots\\
\cdots & A_jA_i& A_jA_{n-i}^*+A_{n-j}^*A_i &A_{n-j}^*A_{n-i}^*D_{P}&  -A_{n-j}^*A_{n-i}^*P&0&\cdots\\
\cdots&0&A_jA_i  & {\normalsize \substack{A_jA_{n-i}^*D_{P}\\
+A_{n-j}^*D_{P}S_i}}&
{\normalsize \substack{-A_jA_{n-i}^*P^*+A_{n-j}^*D_{P}D_{P^*}{B_{n-i}}\\
-A_{n-j}^*P^*{B_i}^*}} &-A_{n-j}^*P^*{B_{n-i}}&\cdots\\
\hline

\cdots&0&0 &S_jS_i&
S_jD_{P^*}{B_{n-i}}+D_{P^*}{B_{n-j}}{B_i}^*&D_{P^*}{B_{n-j}}{B_{n-i}}&\cdots\\
\hline

\cdots&0&0 &0&  {B_j}^*{B_i}^*& {B_j}^*{B_{n-i}}+{B_{n-j}}{B_i}^*&\cdots\\
\cdots &0&0 &0&  0&{B_j}^*{B_i}^*& \cdots\\
\vdots &\vdots&\vdots&\vdots&\vdots&\vdots &\bm{\ddots}\\
\end{array} \right]
\end{equation*}
It is evident from relations (\ref{rel:01}) that except for the
entities $(-1,0), (0,1), (-1,1)$ in the matrices of $R_iR_j$ and
$R_jR_i$, all other corresponding entities are equal. So, for
proving $R_iR_j=R_jR_i$, it is enough to verify that the entities
$(-1,0), (0,1), (-1,1)$ of $R_iR_j$ are equal to that of $R_jR_i$. Therefore, it suffices to establish the following operator identities and a proof is given in the `Appendix'.
\begingroup
    \begin{align}\label{eqn:New01}
    & (a_1)\;\;  A_iA_{n-j}^*D_{P}+A_{n-i}^*D_{P}S_j =
A_jA_{n-i}^*D_{P}+A_{n-j}^*D_{P}S_i \notag \\
  & (a_2)\;\; S_iD_{P^*}{B_{n-j}}+D_{P^*}{B_{n-i}}{B_j}^*= S_jD_{P^*}{B_{n-i}}+D_{P^*}{B_{n-j}}{B_i}^* \notag \\
    & (a_3)\;\; A_iA_{n-j}^*P^* - A_{n-i}^*D_{P}D_{P^*}{B_{n-j}}+ A_{n-i}^*P^*{B_j}^*
= A_jA_{n-i}^*P^* - A_{n-j}^*D_{P}D_{P^*}{B_{n-i}} + A_{n-j}^*P^*{B_i}^*.
    \end{align}
    \endgroup

\noindent \textit{Step 2.} We now show that $R_iU=UR_i$ for each
$i=1,\dots,n-1$.
\begin{equation*}\label{2.5}
R_iU= \footnotesize \left[
\begin{array}{ c c c c|c|c c c c }
\bm{\ddots} &\vdots &\vdots&\vdots   &\vdots  &\vdots& \vdots& \vdots& \vdots\\
\cdots &0&A_i&A_{n-i}^* &0&0&0&0&\cdots\\
\cdots &0&0&A_i&A_{n-i}^*D_{P}&-A_{n-i}^*P^*&0&0&\cdots\\
\cdots&0&0&0&A_iD_{P}+A_{n-i}^*D_{P}P
&A_{n-i}^*D_{P}D_{P^*}-A_iP^*
& -A_{n-i}^*P^*&0&\cdots\\
\hline

\cdots&0&0&0 &S_iP& S_iD_{P^*}&D_{P^*}{B_{n-i}}&0&\cdots\\
\hline

\cdots&0&0 &0&0&0&  {B_i}^*& {B_{n-i}}&\cdots\\
\cdots&0&0 &0&0&0&0& {B_i}^*&\cdots\\

\cdots &0&0   &0&  0&0&0&0&\cdots\\
\vdots &\vdots&\vdots&\vdots&\vdots&\vdots&\bm{\ddots}\\
\end{array} \right]
\end{equation*}
and
\begin{equation*}\label{2.5}
UR_i= \footnotesize \left[
\begin{array}{ c c c c|c|c c c c }
\bm{\ddots} &\vdots &\vdots&\vdots   &\vdots  &\vdots& \vdots& \vdots& \vdots\\
\cdots &0&A_i&A_{n-i}^*&0 &0&0&0&\cdots\\
\cdots &0&0&A_i&A_{n-i}^*D_{P}&-A_{n-i}^*P^*&0&0&\cdots\\
\cdots &0&0&0&D_{P}S_i&D_{P}D_{P^*}{B_{n-i}}-P^*{B_i}^*
& -P^*{B_{n-i}}&0&\cdots\\
\hline

\cdots &0&0&0 &PS_i& PD_{P^*}{B_{n-i}}+D_{P^*}{B_i}^*&D_{P^*}{B_{n-i}}&0&\cdots\\
\hline

\cdots &0&0 &0&0&0&  {B_i}^*& B_{n-i}&\cdots\\
\cdots &0&0 &0&0&0&0& {B_i}^*&\cdots\\

\cdots &0&0   &0&  0&0&0&0&\cdots\\
\vdots &\vdots&\vdots&\vdots&\vdots&\vdots&\bm{\ddots}\\
\end{array} \right].
\end{equation*}
The equality of the entities in the positions $(-1,2),\,(-1,0)$
and $(0,1)$ of $R_iU$ and $UR_i$ follows from part-(1), part-(2)
and part-(3) of Lemma \ref{funda-properties}. Therefore, for
showing the equality of $R_1U$ and $UR_1$ we have to verify that
$A_{n-i}^*D_{P}D_{P^*}-A_iP^*=D_{P}D_{P^*}B_{n-i}-P^*{B_i}^*$. Let
$T=(A_{n-i}^*D_{P}D_{P^*}-A_iP^*)-(D_{P}D_{P^*}B_{n-i}-P^*{B_i}^*)$.
Then $T$ maps $\mathcal D_{P^*}$ into $\mathcal D_{P}$. Now
\begin{align*}
D_{P}TD_{P^*}
&=D_{P}A_{n-i}^*D_{P}D_{P^*}^2-D_{P}A_iP^*D_{P^*}+D_{P}P^*{B_i}^*D_{P^*}
\\& \quad -D_{P}^2D_{P^*}B_{n-i}D_{P^*}
\\&
=(S_{n-i}^*-P^*S_i)(I-PP^*)-(S_i-S_{n-i}^*P)P^*\\& \quad +P^*(S_i-PS_{n-i}^*)
-(I-P^*P)(S_{n-i}^*-S_iP^*) \\& =0.
\end{align*}
We used (\ref{nagy-foias}), (\ref{funda-repeat}) and
(\ref{funda-repeat}). Hence $R_iU=UR_i$.\\

\noindent \textit{Step 3.} We now show that
\[
\| f(R_1,\dots , R_{n-1},U) \| \leq \| f \|_{\infty , \Gamma_n}\;,
\text{ for all } f \in \mathbb C[z_1,\dots, z_n].
\]
We first show that each $R_i$ is normal. It suffices if we show
that $R_i=R_{n-i}^*U$. This is because if $R_i=R_{n-i}^*U$, then
$R_{n-i}=R_i^*U$. Now,
$
R_iR_{n-i}=R_i R_i^*U $ and $R_{n-i}R_i =
R_i^*UR_i=R_i^*R_iU$. Since $R_iR_{n-i}=R_{n-i}R_i$, we have that $R_iR_i^*=R_i^*R_i$.
\begin{eqnarray}
R_{n-i}^*U & = \notag 
 \SMALL \left[
\begin{array}{ c c c c|c|c c c c}
\bm{\ddots}&\vdots &\vdots&\vdots   &\vdots  &\vdots& \vdots&\vdots&\vdots\\
\cdots&A_i&A_{n-i}^*&0&0&0&0&0&\cdots\\
\cdots&0&A_i&A_{n-i}^*&0&0&0&0&\cdots\\
\hline

\cdots&0&0&D_{P}A_i&S_{n-i}^*&0&0&0&\cdots\\
\hline

\cdots&0&0&-PA_i&  {B_i}^*D_{P^*}& B_{n-i}&0&0&\cdots\\
\cdots&0&0&0&0&{B_i}^*&B_{n-i}&0&\cdots\\
\vdots&\vdots&\vdots&\vdots&\vdots&\vdots&\vdots&\vdots&\bm{\ddots}\\
\end{array} \right]
\left[
\begin{array}{ c c c|c|c c c }
\bm{\ddots} &\vdots&\vdots   &\vdots  &\vdots& \vdots&\vdots\\
\cdots&0&I  &0&  0&0&\cdots\\
\cdots&0&0  &D_{P}&  -{P}^*&0&\cdots\\ \hline

\cdots&0&0   &{P}&   D_{{P}^*}&0&\cdots\\ \hline

\cdots&0&0   &0&  0& I&\cdots\\
\cdots&0&0   &0&  0&0&\cdots\\
\vdots&\vdots&\vdots&\vdots&\vdots&\vdots&\bm{\ddots}\\
\end{array} \right] \notag \\&
= \SMALL \left[
\begin{array}{ c c c c|c|c c c c }
\bm{\ddots} &\vdots &\vdots&\vdots   &\vdots  &\vdots& \vdots& \vdots& \vdots\\
\cdots &0&A_i&A_{n-i}^* &0&0&0&0&\cdots\\
\cdots &0&0&A_i&A_{n-i}^*D_{P}&-A_{n-i}^*P^*&0&0&\cdots\\

\hline

\cdots&0&0&0 &S_{n-i}^*P+D_{P}A_iD_{P}& S_{n-i}^*D_{P^*}-D_{P}A_iP^*&0&0&\cdots\\
\hline

\cdots&0&0 &0&{B_i}^*D_{P^*}P-PA_iD_{P}& {B_i}^*D_{P^*}^2+PA_iP^*& B_{n-i}&0&\cdots\\
\cdots&0&0 &0&0&0&{B_i}^*& B_{n-i}&\cdots\\

\vdots &\vdots&\vdots&\vdots&\vdots&\vdots&\bm{\ddots}\\
\end{array} \right]
\end{eqnarray}

\noindent In order to prove $R_i=R_{n-i}^*U$, we need to show the
following steps because the other equalities follow from
(\ref{funda-repeat}) and (\ref{funda-repeat1}).
\begin{itemize}
\item[($c_1$)]${B_i}^*D_{P^*}P=PA_iD_{P^*}$, \item[($c_2$)]
$D_{P^*}B_{n-i}=S_{n-i}^*D_{P^*}-D_{P}A_iP^*$, \item[($c_3$)]
${B_i}^*D_{P^*}^2+PA_iP^*={B_i}^*$.
\end{itemize}

\noindent $(c_1)$. This follows from part-(1) of Lemma
\ref{funda-properties} together with (\ref{nagy-foias}).\\

\noindent $(c_2).$  Let $J_1=D_{P^*}B_{n-i}+D_{P}A_iP^*$. Now
\begin{align*}
J_1D_{P^*} & =D_{P^*}B_{n-i}D_{P^*}+D_{P}A_iP^*D_{P^*}
\\&=(S_{n-i}^*-S_iP^*)+D_{P}B_iD_{P}P^*\\&
=(S_{n-i}^*-S_iP^*)+(S_i-S_{n-i}^*P)P^*\\&=S_{n-i}^*D_{P^*}^2.
\end{align*}
We used (\ref{nagy-foias}), (\ref{funda-repeat}) and
(\ref{funda-repeat1}) here. Since $J$ is defined from $D_{P^*}$ to
$\mathcal H$, $(c_2)$ is
established.\\

\noindent $(c_3)$. ${B_i}^*D_{P^*}^2+PA_iP^*=
{B_i}^*(I-PP^*)+{B_i}^*PP^*={B_i}^*$.\\

Let $f(z_1,\dots,z_n)$ be a holomorphic polynomial. It is evident
from the block matrices of the operators $R_1.\dots,R_{n-1}, U$
that
\[
 f(R_1,\dots,R_{n-1},U)  =\left[
\begin{array}{ccc}
f(G_{11},\dots, G_{n-1\, 1}, Q_1) & * & *\\
0 & f(S_1,\dots,S_{n-1},P) & *\\
0& 0& f(G_{15},\dots, G_{n-1\,5},Q_5)  \end{array} \right].
\]
Since $R_1,\dots, R_{n-1},U$ are commuting normal operators, by
Fuglede's Theorem (see \cite{Fuglede}), they doubly commute and
thus $f(R_1,\dots , R_{n-1},U)$ is a normal operator. Therefore,
\[
\|f(R_1,\dots , R_{n-1},U)\|=r(f(R_1,\dots , R_{n-1},U)).
\]
Let us consider the following diagonal block matrices of operators defined on $\mathcal K$:
\[
\widetilde{R_i}=\left[
\begin{array}{ccc}
G_{i1} & 0 & 0\\
0 & S_{i} & 0\\
0& 0& G_{i5}  \end{array} \right]\;,\; \widetilde{U}=\left[
\begin{array}{ccc}
Q_1 & 0 & 0\\
0 & P & 0\\
0& 0& Q_5  \end{array} \right]\;, \quad 1\leq i \leq n-1.
\]
Since $\Sigma_1$, $\Sigma_2$ are $\Gamma_{n-1}$-contractions for every $z\in \T$, it follows from Theorem \ref{model1} that the tuples $(G_{11},\dots,G_{n-1\,1},Q_1)$ and
$(G_{15},\dots,G_{n-1\,5},Q_5)$ are $\Gamma_n$-isometry and $\Gamma_n$-co-isometry respectively. Therefore,
$(\widetilde{R_1},\dots,\widetilde{R_{n-1}},\widetilde{U})$ is a
$\Gamma_n$-contraction by being direct sum of three
$\Gamma_n$-contractions. Now
\[
 f(\widetilde{R_1}, \dots, \widetilde R_{n-1}, \widetilde{U})
=\left[
\begin{array}{ccc}
f(G_{11},\dots, G_{n-1\, 1}, Q_1) & 0 & 0\\
0 & f(S_1,\dots,S_{n-1},P) & 0\\
0& 0& f(G_{15},\dots, G_{n-1\,5},Q_5)  \end{array} \right].
\]
We now apply to $f(R_1,\dots,R_{n-1}, U )$, Lemma 1 in
\cite{hong}, which states that the spectrum of an operator of the
form $\begin{bmatrix} X&Y\\0&Z
\end{bmatrix}$ is a subset of $\sigma(X)\cup \sigma (Z)$.
So, we obtain
\begin{align*}
\sigma(f(R_1,\dots,R_{n-1},U)) & \subseteq
\sigma(f(G_{11},\dots, G_{n-1\, 1}, Q_1))\cup
\sigma(f(S_1,\dots,S_{n-1},P)) \\& \quad \cup
\sigma(f(G_{15},\dots, G_{n-1\,
5}, Q_5)) \\
& =
\sigma(f(\widetilde{R_1},\dots,\widetilde{R_{n-1}},\widetilde{U})).
\end{align*}
Now by the spectral mapping theorem in several variables, we have
that
\[
\sigma(f(\widetilde{R_1},\dots,\widetilde{R_{n-1}},\widetilde{U}))
=\{ f(\lambda_1,\dots,\lambda_n)\,:\,
(\lambda_1,\dots,\lambda_n)\in \sigma_T(\widetilde{R_1},\dots,
\widetilde{R_{n-1}}, \widetilde{U}) \}.
\]
Since $(\widetilde{R_1},\dots,\widetilde{R_{n-1}},\widetilde{U})$
is a $\Gamma_n$-contraction,
$\sigma_T(\widetilde{R_1},\dots,\widetilde{R_{n-1}},\widetilde{U})
\subseteq \Gamma_n$. Therefore,
\[
r(f(\widetilde{R_1},\dots,\widetilde{R_{n-1}},\widetilde{U}))\leq
\sup_{(z_1\dots,z_{n-1})\in\Gamma_n}\,|f(z_1,\dots,z_{n-1})|=\|f\|_{\infty,\Gamma_n}.
\]
Since
$
\sigma(f(R_1,\dots,R_{n-1},U)) \subseteq
\sigma(f(\widetilde{R_1},\dots,\widetilde{R_{n-1}},\widetilde{U}))\,,
$
we have that
$r(f(\widetilde{R_1},\dots,\widetilde{R_{n-1}},\widetilde{U}))$ is less than or equal to
$\|f\|_{\infty,\Gamma_n}$. It follows from here that
\[
\|f(\widetilde{R_1},\dots,\widetilde{R_{n-1}},\widetilde{U})
\|=r(f(\widetilde{R_1},\dots,\widetilde{R_{n-1}},\widetilde{U}))\leq
\|f\|_{\infty,\Gamma_n},
\]
and the proof of the reverse direction is complete.\\

\noindent \textbf{\textit{The} $( \Rightarrow )$ \textit{part}.}\\

\noindent To establish the forward direction of the theorem, we first ensure the uniqueness (upto a unitary) of the minimal unitary dilation as far as the dilation space is the minimal unitary dilation space of $P$. To do that it is enough to prove the following.
\begin{enumerate}
\item If $(\widehat{R}_1,\dots,\widehat{R}_{n-1},U)$ on $\mathcal{K}$ is a $\Gamma$-unitary dilation of $(S_1,\dots,S_{n-1},P)$, then
$\widehat{R}_i=R_i$ for $i=1,\dots ,n-1$.
\item If $(\overline{R}_1,\dots, \overline{R}_{n-1},\overline{U})$ is a $\Gamma_n$-unitary dilation of $(S_1,\dots,S_{n-1},P)$, where
$\overline{U}$ is a minimal unitary dilation of $P$, then $(\overline{R}_1,\dots, \overline{R}_{n-1},\overline{U})$ is unitarily equivalent to $(R_1,\dots, R_{n-1},U)$.
\end{enumerate}
Needless to mention that without loss of generality we can choose $\mathcal K$ to be the Schaeffer minimal unitary dilation space $ l^2(\mathcal{D}_P) \oplus \mathcal{H} \oplus l^2(\mathcal{D}_{P^*})$ and $U$ on $\mathcal K$ to be the Schaeffer minimal unitary dilation of $P$ as described in the previous part of the proof. Suppose $(\widehat{R}_1,\dots,\widehat{R}_{n-1},U)$ on $\mathcal{K}$ is a $\Gamma$-unitary dilation of $(S_1,\dots,S_{n-1},P)$. We first show that each $\widehat R_i$ admits an upper triangular block matrix of the following form:
\[
\widehat R_i=
\begin{pmatrix}
* & * & * \\
0 & S_i & * \\
0 & 0 & * \\
\end{pmatrix}, \quad \text{with respect to } \;\; \mathcal{K}=l^2(\mathcal{D}_P) \oplus \mathcal{H} \oplus l^2(\mathcal{D}_{P^*}).
\]
This is indeed a general fact: if $(\widehat{R}_1, \dots, \widehat R_{n-1},U)$ is a dilation (not necessarily a $\Gamma_n$-unitary one) of $(S_1,\dots, S_{n-1},P)$ whence $U$ is the Schaeffer minimal unitary dilation of $P$, the block matrices of each $\widehat R_i$ is of such upper triangular form. A proof to this general fact could be found in Chapter-II of \cite{nagy}. Nevertheless, we include a proof here for the convenience of a reader. Let us denote $\widehat{\mathcal{H}}=l^2(\mathcal{D}_P) \oplus \mathcal{H}$. Since $U$ is the minimal unitary dilation of $P$, we have that
\[
\mathcal{K}=\bigvee_{m=-\infty}^{\infty}U^m\mathcal{H} \quad \text{ and } \quad \widehat{\mathcal{H}}=\bigvee_{m=0}^{\infty}U^m\mathcal{H}=\bigvee_{m=0}^{\infty}V^m\mathcal{H},
\]
where $V$ is the Schaeffer minimal isometry dilation of $P$ on $\widehat{\HS}$.
Again for $i=1,\dots ,n-1$ we have
\[
P_{\mathcal{H}} \widehat{R}_i(U^mh)=S_iP^mh= S_iP_{\mathcal{H}}U^mh, \text{ for all } h \in \mathcal{H}, m \in \mathbb{N} \cup \{ 0 \}.
\]
Set $\widehat{R}_i=(R^i_{tk})_{t,k=1}^3$ with respect to $\mathcal{K}=l^2(\mathcal{D}_P) \oplus \mathcal{H} \oplus l^2(\mathcal{D}_{P^*})$ for each $i$ and consider the block matrix of $U$ from (\ref{sec:eqn-011}). Then we have $P_{\mathcal{H}}\,\widehat R_i|_{\widehat{\mathcal{H}}}=S_iP_{\mathcal{H}}|_{\widehat{\mathcal{H}}}$ or equivalently $S_i^*=P_{\widehat{\mathcal{H}}}\, \widehat R_i^*|_{\mathcal{H}}$ for $1 \leq i \leq n-1$. This shows that $R^i_{21}=0$. Set $\widehat{\mathcal{H}}_1=\mathcal{H} \oplus l^2(\mathcal{D}_{P^*})$, then note that $\widehat{\mathcal{H}}_1=\bigvee_{n=0}^{\infty}{U^*}^n\mathcal{H}$.
So, for $i=1,\dots, n-1$ we have that
\[
P_{\mathcal{H}} \widehat R^*_i({U^*}^mh)=S^*_i{P^*}^mh= S^*_iP_{\mathcal{H}}{U^*}^mh, \text{ for all } h \in \mathcal{H}, m \in \mathbb{N} .
\]
Thus, $S_i=P_{\widehat{\mathcal{H}}_1}\, \widehat{R}_i|_{\mathcal{H}}$. Therefore, $R^i_{32}=0$, for each $i$. Now we show that $R^j_{13}=0$. Since $\widehat R_i$ commutes with $U$, considering the block matrix form of $U$ as in (\ref{sec:eqn-011}) an easy calculation gives
\begin{eqnarray}\label{sec:eqn-002}
R^i_{31}Q_1=Q_5R^i_{31} \text{ and } R^i_{31}Q_2=0,
\end{eqnarray}
This follows by comparing the $(3,1)$ and $(3,2)$ entries of $\widehat R_iU$ and $U \widehat R_i$ respectively.
Again $Ran \, Q_2=Ran(I-Q_1Q_1^*)$. Therefore, $R^i_{31}(I-Q_1Q_1^*)=0$. Thus from (\ref{sec:eqn-002}) it follows that
$R^i_{31} = Q_5R^i_{31}Q_1^*$. This gives after finitely many iteration $R^i_{31}=Q_5^nR^i_{31}{Q_1^*}^n$. Now since ${Q_1^*}^n \to 0$ as $n \to \infty$, we have that $R^i_{31}=0$ for all $i=1,2,\dots, n-1$ and thus we are done.\\

As a next step we prove the following claim.\\

\noindent \textit{Claim 1.}
If $({R}_1^{'},\dots, {R}_{n-1}^{'},U)$ on $\mathcal K$ is another $\Gamma_n$-unitary dilation of $(S_1,\dots, S_{n-1},P)$ such that ${R}_i^{'}|_{\widehat{\mathcal H}}=R_i|_{\widehat{\mathcal H}}$, $\widehat{\mathcal H}=l^2(\mathcal D_P)\oplus \mathcal H$, then $R_i^{'}=R_i$ for $i=1,\dots, n-1$.\\

\noindent \textit{Proof of Claim 1.} Let $V_i^{'}={R}_i^{'}|_{\widehat{\mathcal H}}=R_i|_{\widehat{\mathcal H}}$ for $i=1,\dots, n-1$. Then the block matrices of $R_i^{'}$ and $U$ with respect to the orthogonal decomposition $\mathcal K=\widehat{\mathcal H}\oplus \mathcal{\mathcal D_{P^*}}$ are upper triangular, say
\[
R_i^{'}=\begin{bmatrix}
V_i^{'} & \tilde{C}_i \\
0 & \tilde{D}_i \\
\end{bmatrix}
\quad \text{ and } \quad U =\begin{bmatrix}
V & {C} \\
0 & {D} \\
\end{bmatrix},
\]
where $V=U|_{\widehat{\mathcal H}}$ and $i=1,\dots , n-1$. If for each $i$ the block matrices of $R_i$ with respect to the same decomposition is
\[
R_i=\begin{bmatrix}
V_i^{'} & {C}_i \\
0 & {D}_i \\
\end{bmatrix},
\]
Since $({R}_1^{'},\dots,{R}_2^{'},U)$ is a $\Gamma_n$-unitary, we have by Theorem \ref{G-unitary} that
\[
R_i^{'}R_j^{'}=R_j^{'}R_i^{'},  \quad  R_i^{'}U=UR_i^{'} \quad \& \quad R_i^{'}=R_{n-i}^{'*}U \quad \forall i,j.
\]
Since $U$ is unitary, we have
\begin{eqnarray}\label{gamma1}
D^*D+C^*C=I \text{ and }C^*V=0.
\end{eqnarray}
The fact that $R_i^{'}$ commutes with $U$ gives us
\begin{eqnarray}\label{gamma23}
V_i^{'}C+\tilde{C}_iD=V\tilde{C}_i+C\tilde{D}_i \text{ and } \tilde{D}_iD=D\tilde{D}_i.
\end{eqnarray}
Also the commutativity of $R_i^{'}$ and $R_j^{'}$ gives
\begin{eqnarray}\label{gamma12}
V_i^{'}\tilde{C}_j+\tilde{C}_i\tilde{D}_j=V_j^{'}\tilde{C}_i+\tilde{C}_j\tilde{D}_i \text{ and } \tilde{D}_i\tilde{D}_j=\tilde{D}_j\tilde{D}_i.
\end{eqnarray}
Again $R_i^{'}=R_{n-i}^*U$ implies that
\begin{eqnarray}\label{gamma123}
\tilde{C}_i=V_{n-i}^{'*}C \text{ and } \tilde{D}_i=\tilde{C}_{n-i}^*C+\tilde{D}_{n-i}^*D.
\end{eqnarray}
Therefore,
\begin{eqnarray*}
\tilde{C}_i(d_0,d_1,\dots)
&=&
V_{n-i}^{'*}C(d_0,d_1,\dots)
\\
&=&
V_{n-i}^{'*}(D_{P^*}d_0 \oplus (-P^*d_0,0,0,\dots))
\\
&=&
((S_{n-i}^*D_{P^*}-D_PA_iP^*)d_0 \oplus (-A_{n-i}^*P^*d_0,0,0,\dots))
\\
&=&
(D_{P^*}B_{n-i}d_0 \oplus (-A_2^*P^*d_0,0,0,\dots)) \quad [\text{by part-(3) of Lemma \ref{funda-properties}}] \\
&=&
C_i(d_0,d_1,\dots).
\end{eqnarray*}
Thus, $\tilde{C}_i=C_i$ for $i=1,\dots, n-1$. We now show that $\tilde D_i=D_i$ for each $i$. Multiplying (\ref{gamma23}) from left by $C^*$ and using (\ref{gamma1}), we have
$
\tilde{D}_{i}^*(I-D^*D)=D^*\tilde{C}_{i}^*C+C^*V_{i}^{'*}C.
$
Since $(I-D^*D)$ is the orthogonal projection of $l^2(\mathcal{D}_{P^*})$ onto $\mathcal D_{P^*}\oplus \{0\}\oplus \{0\}\oplus \dots$, for $d\in\mathcal{D}_{P^*}$, we have
\begin{equation}\label{eqn:New02}
\tilde{D}_i^*(d,0,0,\dots)=(B_id,B_{n-i}^*d,0,0,\dots).
\end{equation}
A proof of (\ref{eqn:New02}) is given in the `Appendix'. Now
\begingroup
\begin{align*}
\tilde{D}_i^*(\underbrace{0,\dots,0}_\text{$n$ times},d,0,\dots )
& =  \tilde{D}_i^*{D^*}^n(d,0,0,\dots) \\
&= {D^*}^n\tilde{D}_i^*(d,0,0,\dots) [\text{ using (\ref{gamma23})}] \\
&= {D^*}^n(B_id,B_{n-i}^*d,0,0,\dots) \\
&= (\underbrace{0,\dots,0}_{\text{$n$ times}},B_id,B_{n-i}^*d,0,0,\dots), \text{ for every } n \geq 0.
\end{align*}
\endgroup
Again, for $(c_0,c_1,c_2,\dots) \in l^2(\mathcal{D}_{P^*})$, we have
\begin{eqnarray*}
\tilde{D}_i^*(c_0,c_1,c_2,\dots)&=&\tilde{D}_i^*((c_0,0,0,\dots) + (0,c_1,0,\dots) + (0,0,c_2,\dots) +\cdots )
\\
&=&
(B_ic_0,B_{n-i}^*c_0,0,\dots)  + \;  (0,B_ic_1,B_{n-i}^*c_1,0,0,\dots) \\
&& + (0,0,B_ic_2,B_{n-i}^*c_2,0,0,\dots) + \cdots
\\
&=&
(B_ic_0,B_{n-i}^*c_0+B_ic_1,B_{n-i}^*c_1+B_jc_2,\dots).
\end{eqnarray*}
Also, we have (see the `Appendix' for a proof)
\begin{align}\label{eqn:New03}
\tilde{D}_i(c_0,c_1,c_2,\dots) &= (B_i^*c_0+B_{n-i}c_1,B_i^*c_1+B_{n-i}c_2,B_i^*c_2+B_{n-i}c_3,\dots) \notag \\
& =D_i(c_0,c_1,c_2,\dots),
\end{align}
and consequently $\tilde{D}_i=D_i$ for each $i=1.\dots, n-1$. Hence $R_i^{'}=R_i$ for $i=1,\dots, n-1$ and proof of \textit{Claim 1} is complete.\\

\noindent \textit{Claim 2.} If $(\widehat V_1,\dots, \widehat V_{n-1},V)$ on $\widehat{\mathcal H}=\mathcal H \oplus l^2(\mathcal D_P)$ dilates $(S_1,\dots, S_{n-1},P)$ and satisfies $\widehat V_i= \widehat V_{n-i}^*V$ for each $i$, then $\widehat{V_i}=V_i$ for $i=1, \dots , n-1$. Here $V_i=R_i|_{\widehat{\mathcal H}}$ and $V=U|_{\widehat{\mathcal H}}$.\\

\noindent \textit{Proof of Claim 2.} Let
$V=\begin{pmatrix} P&0\\C_1&D_1
\end{pmatrix}$
with respect to the decomposition $\widehat H=\mathcal H\oplus l^2(\mathcal {D}_P)$, where
\begin{align*}
C_1 &=\begin{bmatrix} {D}_P
\\0\\0\\ \vdots \end{bmatrix}: \mathcal H \rightarrow
l^2(\mathcal{D}_P)
\quad \text{ and } \\ D_1 &=\begin{bmatrix}
0&0&0&\dots\\I&0&0&\dots\\0&I&0&\dots\\\dots&\dots&\dots&\dots
\end{bmatrix}: l^2(\mathcal{D}_P) \rightarrow l^2(\mathcal{D}_P).
\end{align*}
By an argument similar to that of the previous part, we have that each $\widehat{V_i}$ on $\mathcal
H\oplus l^2(\mathcal D_P)$ has block-matrix form $\widehat{V_i}=
\begin{bmatrix} S_i&0\\E_i&F_i
\end{bmatrix}$. Let us consider the natural unitary
\begin{align*}
U:\; & l^2(\mathcal D_P) \rightarrow H^2(\mathcal D_P)
\\& (\displaystyle
\underbrace{0,0,\dots,0}_n,1,0,0,\dots) \mapsto z^n.
\end{align*}
Then $\widehat{V_i}$ and $V$ on $\mathcal H \oplus l^2(\mathcal D_P)$ are respectively identified with the operators
\[
\widetilde{V_i}=\begin{bmatrix} S&0\\UE_i&UF_iU^* \end{bmatrix} \text{ and } \widetilde{V}=\begin{bmatrix} P&0\\UC_1&UD_1U^* \end{bmatrix}
\text{ on } \mathcal H\oplus H^2(\mathcal D_P).
\]
Evidently $UD_1U^*$ is the multiplication
operator $M_z^{\mathcal D_P}$ on $H^2(\mathcal D_P)$. Since $\widetilde V_1,\dots, \widetilde V_{n-1}, \widetilde V$ commute, so do $UF_iU^*$ and $UD_1U^*=M_z^{\mathcal D_P}$ for each $i$. By being a commutant of $M_z^{\mathcal D_P}$, each $UF_iU^*$ takes the form $M_{\varphi}^{\mathcal D_P}$ for some $\varphi_i\in
H^{\infty}(\mathcal B (\mathcal D_P))$. Again by the relations $\widehat V_i=\widehat V_{n-i}^*V$ we have that $\widetilde V_i=\widetilde V_{n-i}^* \widetilde V$ for each $i$. This implies that
\begin{align*}
\begin{bmatrix} S_i&0\\UE_i&M_{\varphi_i}^{\mathcal D_P} \end{bmatrix} & =
\begin{bmatrix} S_{n-i}^*&E_{n-i}^*U^*\\0&{M_{\varphi_{n-i}}^{\mathcal D_P}}^* \end{bmatrix}
\begin{bmatrix} P&0\\UC_1&M_z^{\mathcal D_P} \end{bmatrix} \\
& =
\begin{bmatrix} S_{n-i}^*P+E_{n-i}^*C_1&E_{n-i}^*U^*M_z^{\mathcal D_P}\\{M_{\varphi_{n-i}}^{\mathcal D_P}}^*UC_1&{M_{\varphi_{n-i}}^{\mathcal D_P}}^*M_z^{\mathcal D_P}
\end{bmatrix},
\end{align*}
which gives
\begin{eqnarray}\begin{cases} \label{e9}
&(\mbox{i})\; S_i-S_{n-i}^*P=E_{n-i}^*C_1\\&(\mbox{ii})\;
UE={M_{\varphi_{n-i}}^{\mathcal D_P}}^*UC_1\\&(\mbox{iii})\;
M_{\varphi}^{\mathcal D_P}={M_{\varphi_{n-i}}^{\mathcal D_P}}^*M_z.
\end{cases}
\end{eqnarray}
From (\ref{e9})-(iii), it is clear by considering the power
series expansion that $\varphi(z)=A_i^0+A_{n-i}^{0^*}z$, for some $A_1^0, \dots, A_{n-1}^0\in\mathcal B(\mathcal
 D_P)$. It is obvious that if $D_i^0=\begin{bmatrix} A_i^0&0&0&\dots\\ A_{n-i}^{0*}&A_i^0&0&\dots\\0&A_{n-i}^{0*}&A_i^0&\dots\\\dots&\dots&\dots&\dots \end{bmatrix}$
on $l^2(\mathcal D_P)$,
then $UD_i^0U^*=M_{\varphi_i}^{\mathcal D_P}$. Therefore, it follows that $F_i=D_i^0$ for $i=1,\dots ,n-1$.
Combining this with (\ref{e9})-(ii), we get
$UE_i={M_{\varphi_{n-i}}^{\mathcal D_P}}^*UC_1=UD_{n-i}^{0*}U^*UC_1=UD_{n-i}^{0*}C_1$,
i.e, $E=D_{n-i}^{0*}C_1$. Therefore,
\[
\widehat{V_i}=\begin{bmatrix}
S_i&0\\D_{n-i}^{0*}C_1&D_i^0
\end{bmatrix} \mbox{ on } \mathcal H\oplus l^2(\mathcal D_P)\,, \quad i=1,\dots, n-1.
\]
Considering the above stated matrix forms of $D_i^0$ and $C_1$ we get
\[
D_i^{0*}C_1=\begin{bmatrix} A_i^{0*} D_P
\\0\\0\\\vdots \end{bmatrix}.
\]
Hence with respect to the
decomposition $\widehat H=\mathcal H\oplus l^2(\mathcal D_P)$, we have for each $i$
\[
\widehat{V_i}=\begin{bmatrix} S_i&0&0&0&\dots\\ A_{n-i}^{0*} D_P &A_i^0&0&0&\dots\\
0& A_{n-i}^{0*}& A_i^0&0&\dots\\0&0&A_{n-i}^{0*}& A_i^0&\dots\\\dots&\dots&\dots&\dots&\dots
\end{bmatrix}.
\]
Hence $A_i=A_i^0$ for each $i$ and consequently $\widehat{V}_i=V_i$ for $i=1,\dots ,n-1$. The proof of \textit{Claim 2} is now complete.

It is evident now that the proof of \textit{Part-1} follows from \textit{Claim 1} and \textit{Claim 2}. Now we need to prove \textit{Part-2}. If $(\overline{R}_1,\dots, \overline{R}_{n-1},\overline{U})$ on $\underline{\mathcal K}$ is a $\Gamma_n$-unitary dilation of $(S_1,\dots,S_{n-1}P)$ where
$\overline{U}$ is a minimal isometric dilation of $P$, then there exists a unitary $U_1: \underline{\mathcal K} \to \mathcal{K}$ such that $U_1\overline{U}U_1^*=U$ and $U_1h=h$ for all $h \in \mathcal{H}$. This shows that $(U_1\overline{R_1}U_1^*,\dots,U_1\overline{R}_{(n-1)}U_1^*,U_1\overline{U}U_1^*)$ on $\mathcal K$ is another $\Gamma_n$-unitary dilation of $(S_1,\dots, S_{n-1},P)$, where $U_1\overline{U}U_1^*=U$. Hence by \textit{Part-1}, we have that
\[
(U_1\overline{R_1}U_1^*,\dots,U_1\overline{R}_{(n-1)}U_1^*,U_1\overline{U}U_1^*)=(R_1,\dots,R_{n-1},U).
\]
Hence $(\overline{R}_1,\dots, \overline{R}_{n-1},\overline{U})$ is unitarily equivalent to $(R_1,\dots, R_{n-1},U)$ and the proof of \textit{Part-2} is finished. The rest of the proof follows from the structures of the block matrices of $R_i$ and $U$ as in (\ref{2.3}) and (\ref{2.33}). The restriction of $(R_1, \dots , R_{n-1},U)$ to the joint-invariant subspace $l^2(\mathcal D_P)$ is a $\Gamma_n$-isometry, which is unitarily equivalent to the Toeplitz operator tuple $\left(T_{\phi_1}, \dots , T_{\phi_{n-1}},T_z    \right)$ on $H^2(\mathcal D_P)$, where $\phi_i(z)=A_i+A_{n-i}^*z \;$ for each $i$. Thus, it follows from Lemma \ref{lem:BSR2} that
\[
 \left(
\frac{n-1}{n}(A_1+A_{n-1}^*z),\frac{n-2}{n}(A_2+A_{n-2}^*z),\dots,
\dfrac{1}{n}(A_{n-1}+A_1^*z) \right)
\]
is a $\Gamma_{n-1}$-contraction for every $z\in \T$. Also, considering the restriction of $(R_1^*, \dots , R_{n-1}^*,U^*)$ to the joint-invariant subspace $l^2(\mathcal D_{P^*})$ and using similar argument as above, we have that
\[
\left(
\frac{n-1}{n}(B_{1}+B_{n-1}^*z),\frac{n-2}{n}(B_{2}+B_{n-2}^*z),\dots,
\dfrac{1}{n}(B_{n-1}+B_{1}^*z) \right)
\]
is a $\Gamma_{n-1}$-contraction for every $z\in \T$. The proof of the theorem is now complete.

\end{proof}

\begin{rem}
As we have mentioned before that Theorem \ref{main-dilation-theorem} generalizes the $\Gamma_2$-unitary dilation of a $\Gamma_2$-contraction obtained in \cite{sourav}. The literature tells us that every $\Gamma_2$-contraction $(S,P)$ dilates unconditionally to a $\Gamma_2$-unitary (see \cite{ay-jfa, tirtha-sourav, sourav}). The conditions that are there in the hypotheses of Theorem \ref{main-dilation-theorem} are automatically satisfied by the fundamental operator of a $\Gamma_2$-contraction.
\end{rem}

\subsection{A necessary and sufficient condition for dilation to a special class} In this subsection, we deal with dilation of $\Gamma_n$-contractions which are of the form $(S_1,\dots, S_{n-1},0)$. Since the last component $P=0$ here, we have $\mathcal D_P=\mathcal H$ and thus the minimal dilation space as in Theorem \ref{main-dilation-theorem} becomes $\mathcal K=l^2(\mathcal H)$.

\begin{thm}\label{dilation:char}
A $\Gamma_n$-contraction $\Upsilon = (S_1,\dots,S_{n-1},0)$ on a
Hilbert space $\mathcal H$ dilates to a $\Gamma_n$-unitary $(R_1, \dots , R_{n-1},U)$ on $l^2(\mathcal H)$ with $U$ being the minimal unitary dilation of $0$ if and only if
\[
\left( \dfrac{n-1}{n}(S_1+S_{n-1}^*z),
\dfrac{n-2}{n}(S_2+S_{n-2}^*z),\dots, \dfrac{1}{n}(S_{n-1}+S_1^*z)
\right)
\]
is a $\Gamma_{n-1}$-contraction for every $z\in \mathbb
T$.
\end{thm}

\begin{proof}
It is evident that $(S_1,\dots,S_{n-1})$ and $(S_1^*,\dots ,S_{n-1}^*)$ are the $\ft$-tuples of $\Upsilon$ and $\Upsilon^*$ respectively. So if $$\left( \dfrac{n-1}{n}(S_1+S_{n-1}^*z),
\dfrac{n-2}{n}(S_2+S_{n-2}^*z),\dots, \dfrac{1}{n}(S_{n-1}+S_1^*z)
\right)$$ is a $\Gamma_{n-1}$-contraction for every $z\in \mathbb
T$, then
\[
\left( \dfrac{n-1}{n}(S_1^*+S_{n-1} z),
\dfrac{n-2}{n}(S_2^*+S_{n-2} z),\dots, \dfrac{1}{n}(S_{n-1}^*+S_1 z)
\right)
\]
is also a $\Gamma_{n-1}$-contraction for all $z\in \mathbb T$. Therefore, by Theorem \ref{main-dilation-theorem}
$\Sigma$ dilates to a $\Gamma_n$-unitary on $l^2(\mathcal H)$.\\

Conversely, suppose $\Upsilon$ dilates to a $\Gamma_n$-unitary on $l^2(\mathcal H)$. Since $l^2(\mathcal H)$ is the minimal dilation space for $0$
Then by the last part of Theorem \ref{main-dilation-theorem}, $(R_1,\dots, R_{n-1},U)$ (as in Theorem \ref{main-dilation-theorem}) is a $\Gamma_n$-unitary dilation for $\Sigma$. By Lemma \ref{lem:BSR2}, $\left(\dfrac{n-1}{n}R_1, \dfrac{n-2}{n}R_2,\dots , \dfrac{1}{n} R_{n-1} \right)$ is a $\Gamma_{n-1}$-contraction. It is evident from the block matrix of $\left(\dfrac{n-1}{n}R_1, \dfrac{n-2}{n}R_2,\dots , \dfrac{1}{n} R_{n-1} \right)$ that
\[
\left( \dfrac{n-1}{n}(S_1+S_{n-1}^*z),
\dfrac{n-2}{n}(S_2+S_{n-2}^*z),\dots, \dfrac{1}{n}(S_{n-1}+S_1^*z)
\right)
\]
is a $\Gamma_{n-1}$-contraction for every $z\in \mathbb
T$. The proof is now complete.

\end{proof}

\subsection{Examples of $\Gamma_n$-contractions which dilate without any hypothesis}\label{further-dilation}

In this Subsection, we construct two different classes of $\G$-contractions which dilate to $\G$-unitaries without any hypothesis. This proves that the conditions of Theorem \ref{main-dilation-theorem} are sufficient but not necessary for the existence of a normal $b\G-$dilation of a $\Gamma_n$-contraction.

\begin{eg}\label{eg:N01}

The example that we are going to give here requires several steps. Let us consider the map
\begin{align*}
\varrho \,:\,& \mathbb C^2 \rightarrow \mathbb C^3 \\&
(z_1,z_2)\mapsto (z_1,z_2,0),
\end{align*}
that embeds $\Gamma_2$ analytically inside $\Gamma_3$ in the following way.
\begin{lem}
Let $\Gamma_3^0=\{(s_1,s_2,p)\in\Gamma_3: p=0 \}$. Then
$\varrho(\Gamma_2)=\Gamma_3^0$.
\end{lem}
\begin{proof}
We have that $\varrho(z_1,z_2)=(z_1,z_2,0)$ for all $(z_1,z_2)$ in
$\mathbb C^2$. Let $(s,p)\in \Gamma_2$. Then there are points
$\lambda_1,\lambda_2$ in the closed unit disc $\overline{\mathbb
D}$ such that $(s,p)=(\lambda_1+\lambda_2,\lambda_1\lambda_2)$.
Now clearly the point $(s,p,0)$, which is the image of $(s,p)$
under $\varrho$, is the symmetrization of the points
$\lambda_1,\lambda_2,0$ of $\overline{\mathbb D}$. Therefore,
$(s,p,0)\in\Gamma_3$ and in particular $(s,p,0)$ is in
$\Gamma_3^0$.

Conversely, let $(s_1,s_2,0)\in\Gamma_3^0$. Since $(s_1,s_2,0)$ is
a point of $\Gamma_3$, there are points $z_1,z_2,z_3$ in
$\overline{\mathbb D}$ such that $
\pi_3(z_1,z_2,z_3)=(s_1,s_2,0)$. Now $z_1z_2z_3=0$ implies that at
least one of $z_1,z_2,z_3$ is $0$. Let us assume without loss of
generality that $z_3=0$. Then $s_1=z_1+z_2$ and $s_2=z_1z_2$. This
shows that $(s_1,s_2)\in\Gamma_2$. Hence the proof is complete.

\end{proof}

\begin{lem}\label{implem}
If $(S,P)$ is a $\Gamma_2$-contraction then $(S,P,0)$ is a $\Gamma_3$-contraction.
\end{lem}

\begin{proof}
Let $f$ be a polynomial in $z_1,z_2,z_3$ and let
$f_1(z_1,z_2)=f(z_1,z_2,0)$. Then $f_1$ is a polynomial in $z_1,z_2$ and $f_1(z_1,z_2)=f\circ \varrho(z_1,z_2)$
Now
\begin{align*} \|f(S,P,0)\|=\| f_1(S,P)\| & \leq \|f_1\|_{\infty,
\Gamma}, \quad \textup{since }(S,P) \textup{ is a }\Gamma_2\textup{-contraction},\\
& =\|f\circ \varrho\|_{\infty,\Gamma_2} =\|f\|_{\infty,
\varrho(\Gamma_2)} \leq \|f\|_{\infty, \Gamma_3}.
\end{align*}
Therefore, $(S,P,0)$ is a $\Gamma_3$-contraction.
\end{proof}

The following result is due to Agler and Young. We shall apply this result in this section.

\begin{thm}[\cite{ay-jot}, Corollary 1.9]\label{nicethm}
Let $(S,P)$ be a pair of commuting operators such that $\|P\|<1$
and the spectral radius of $S$ is less than $2$. Then $(S,P)$ is a
$\Gamma$-contraction if and only if
$\omega(D_P^{-1}(S-S^*P)D_P^{-1})\leq 1. $
\end{thm}

\begin{thm}\label{thm:nice1}
If $(S,P)$ is a $\Gamma$-contraction then $(S,P,0)$ is a
$\Gamma_3$-contraction and it dilates to a $\Gamma_3$-unitary.
\end{thm}

\begin{proof}
 It suffices if we show that $\Gamma_3$ is a complete spectral set for $(S,P,0)$. Since rational dilation succeeds on $\Gamma_2$, it follows that $\Gamma$ is a spectral set
for $(S,P)$ if and only if $\Gamma$ is a complete spectral set for $(S,P)$. Let $\boldsymbol f =[f_{ij}]_{m\times n}$ be a
matricial polynomial in three variables $z_1,z_2,z_3$. Let
$\widehat{f}_{ij}(z_1,z_2)=f_{ij}(z_1,z_2,0)$ and
$\boldsymbol{\widehat f}=[\widehat f_{ij}]_{m\times n}$. Now
\begin{align*}
\|\boldsymbol f(S,P,0) \|=\|[f_{ij}(S,P,0)]_{m\times n} \| = \|[\widehat f_{ij} (S,P)]_{m\times n} \| & \leq
\sup_{(z_1,z_2)\in\Gamma}\|\boldsymbol{\widehat f}(z_1,z_2)\| \\&
=\sup_{(z_1,z_2)\in\Gamma}\|[\widehat f_{ij}(z_1,z_2)] \| \\&
=\sup_{(z_1,z_2,0)\in\Gamma_3}\|[\widehat f_{ij}(z_1,z_2,z_3)]\|
\\& \leq \|\boldsymbol f \|_{\infty, \Gamma_3}\,.
\end{align*}
Thus $\Gamma_3$ is a complete spectral set for $(S,P,0)$ and the proof is complete.
\end{proof}

So we have a class of $\Gamma_3$-contractions which always have
$\Gamma_3$-unitary dilation. One can easily verify that the $\ft$-tuple of the $\Gamma_3$-contraction $(S,P,0)$ is $(S,P)$. Now if we
choose $P$ to be non-normal with $\|P\|<1$ and $S$ to be normal
with norm of $S$ being sufficiently small so that the norm of
$D_P^{-1}(S-S^*P)D_P^{-1}$ is less than $1$. Then by Theorem
\ref{nicethm}, $(S,P)$ is a $\Gamma_2$-contraction. Since $S$ is
normal and $P$ is non-normal we have
\[
S^*S-SS^*\neq P^*P-PP^*
\]
and hence the $\ft$-tuple $(S,P)$ does not satisfy the condition that $\left( \dfrac{2}{3}(S+P^*z), \dfrac{1}{3}(P+S^*z) \right)$ is a $\Gamma_2$-contraction because otherwise by commutativity of the pair $\left( \dfrac{2}{3}(S+P^*z), \dfrac{1}{3}(P+S^*z) \right)$, $S,P$ will satisfy $S^*S-SS^*=P^*P-PP^*$. So we get a class of $\Gamma_3$-contractions that dilate to the distinguished boundary $b\Gamma_3$ despite the fact that their $\ft$-tuples do not satisfy the conditions of Theorem \ref{main-dilation-theorem}.

\end{eg}

\begin{eg}
In \cite{JH}, Holbrook has shown that the multivariate von
Neumann inequality holds for any number of $2\times 2$ commuting
contractive matrices, i.e, if $C_1,\hdots,C_n$ are commuting
$2\times 2$ matrices with $\|C_k\|\leq 1$ for all $k$ and if
$f:\overline{\mathbb D^n} \rightarrow \overline{\mathbb D}$ is analytic, then
$\|f(C_1,\hdots,C_n) \|\leq 1$. Moreover, any $n$-tuple of
commuting $2\times 2$ contractions has simultaneous commuting
unitary dilation. See Proposition 2 and Proposition 3 in \cite{JH}
for a proof to these results.

So, $\overline{\mathbb D^n}$ is a spectral set for any $2\times 2$ commuting contractions $C_1,\dots,C_n$. Let
$(U_1,\dots, U_n)$ be a commuting unitary dilation of
$(C_1,\dots, C_n)$. It is obvious that the symmetrization of
$U_1,\dots, U_n$, i.e, $\pi_n(U_1,\dots, U_n)$ is a $\Gamma_n$-unitary dilation of the $\Gamma_n$-contraction $\pi_n(C_1,\dots, C_n)$. Also, not every $\G$-contraction arises as symmetrization of commuting contractions (see \cite{spal3}). Whether every $\G$-contraction consisting of commuting matrices dilates to a $\G$-unitary or not remains an open problem.
\end{eg}

\vspace{0.4cm}

\section{An explicit $\Gamma_n$-isometric dilation and a concrete functional model for $\Gamma_n$-contractions} \label{sec:isometric-dilation-1}

\vspace{0.4cm}

\noindent In this Section, we produce a $\G$-isometric dilation of a $\G$-contraction by an application of the $\G$-unitary dilation that we have constructed in Section \ref{conditional-dilation}. The $\G$-isometric dilation will act on the minimal isometric dilation space $\HS \oplus l^2(\mathcal D_P)$ of $P$. Indeed, we shall prove that $H\oplus l^2(\mathcal D_P)$ is a joint-invariant subspace for the $\G$-unitary $(R_1, \dots , R_{n-1},U)$ on $l^2(\mathcal D_P) \oplus \HS \oplus l^2(\mathcal D_{P^*})$ from Section \ref{conditional-dilation}. As a consequence of the $\G$-isometric dilation, we obtain a functional model for such $\G$-contractions. We begin with a pair of preparatory results.

\begin{prop}\label{exist-minimal}
If a $\Gamma_n$-contraction $(S_1,\dots,S_{n-1},P)$ defined on $\mathcal H$
has a $\Gamma_n$-isometric dilation, then it has a minimal
$\Gamma_n$-isometric dilation.
\end{prop}
\begin{proof}
Let $(T_1,\dots,T_{n-1},V)$ on $\mathcal K\supseteq \mathcal H$ be a
$\Gamma_n$-isometric dilation of $(S_1,\dots,S_{n-1},P)$. Let $\mathcal K_0$
be the space defined as
$$\mathcal K_0=\overline{\textup{span}}\{ T_1^{m_1}\dots T_{n-1}^{m_{n-1}}V^n h\,:\;
h\in\mathcal H \textup{ and }m_1,\dots,m_{n-1},n\in \mathbb N \cup \{0\}
\}.$$ Clearly $\mathcal K_0$ is invariant under $T_1^{m_1},\dots,
T_{n-1}^{m_{n-1}}$ and $V^n$, for any non-negative integer $m_1,\dots,m_{n-1}$ and
$n$. Therefore if we denote the restrictions of $T_1,\dots, T_{n-1}$ and $P$
to the common invariant subspace $\mathcal K_0$ by $T_{11},
\dots, T_{1{n-1}}$ and $V_1$ respectively, we get $T_{11}^{m_1}k=T_1^{m_1}k,
\,\dots ,  T_{1{n-1}}^{m_{n-1}}k=T_{n-1}^{m_{n-1}}k, \textup{ and } V_1^nk=V^nk,\quad
\textup{ for any }k\in\mathcal K_0.$ Hence
$$\mathcal K_0=\overline{\textup{span}}\{ T_{11}^{m_1}\dots T_{1{n-1}}^{m_{n-1}}V_1^n h\,:\;
h\in\mathcal H \textup{ and }m_1,m_2,n\in \mathbb N \cup \{0\} \}.
$$ Therefore for any non-negative integers $m_1,\dots, m_{n-1}$ and $n$ we have
$$ P_{\mathcal H}(T_{11}^{m_1}\dots T_{1{n-1}}^{m_{n-1}}V_1^{n})h=P_{\mathcal H}(T_1^{m_1}T_2^{m_2}V^n )h,
\quad \textup{ for all }h\in\mathcal H .
$$
Now $(T_{11},\dots , T_{1{n-1}},V_1)$ is a
$\Gamma_{n-1}$-contraction by being the restriction of a
$\Gamma_{n-1}$-contraction $(T_1,\dots , T_{n-1},V)$ to a common
invariant subspace $\mathcal K_0$. Again $V_1$, being the
restriction of an isometry to an invariant subspace, is also an
isometry. Therefore by Theorem \ref{G-isometry}-part (4),
$(T_{11},\dots ,T_{1{n-1}},V_1)$ is a $\Gamma_n$-isometry. Hence
$(T_{11},\dots ,T_{1{n-1}},V_1)$ is a minimal $\Gamma_n$-isometric
dilation of $(S_1,\dots ,S_{n-1},P)$.

\end{proof}

\begin{prop}\label{dilation-extension}
Let $(T_1,\dots ,T_{n-1},V)$ on $\mathcal K\supseteq \mathcal H$
be a $\Gamma_n$-isometric dilation of a $\Gamma_n$-contraction
$(S_1,\dots ,S_{n-1},P)$. If $(T_1,\dots ,T_{n-1},V)$ is minimal,
then $(T_1^*,\dots ,T_{n-1}^*,V^*)$ is a $\Gamma_n$-co-isometric
extension of $(S_1^*,\\ \dots ,S_{n-1}^*,P^*)$. Conversely, if
$(T_1^*,\dots ,T_{n-1}^*,V^*)$ is a $\Gamma_n$-co-isometric
extension of $(S_1^*,\dots ,S_{n-1}^*,P^*)$ then $(T_1,\dots
,T_{n-1},V)$ is a $\Gamma_n$-isometric dilation of $(S_1,\dots
,S_{n-1},P)$.
\end{prop}
\begin{proof}
We first prove that $S_1P_{\mathcal H}=P_{\mathcal H}T_1,
S_2P_{\mathcal H}=P_{\mathcal H}T_2$ and $PP_{\mathcal
H}=P_{\mathcal H}V$, where $P_{\mathcal H}:\mathcal K \rightarrow
\mathcal H$ is orthogonal projection onto $\mathcal H$. Clearly
$$\mathcal K=\overline{\textup{span}}\{ T_1^{m_1}\dots T_{n-1}^{m_{n-1}}V^n h\,:\;
h\in\mathcal H \textup{ and }m_1,\dots, m_{n-1},n\in \mathbb N
\cup \{0\} \}.$$ Now for $h\in\mathcal H$ we have that
\begin{align*}
S_iP_{\mathcal H}(T_1^{m_1}\dots T_{n-1}^{m_{n-1}}V^n h) &
=S_i(S_1^{m_1}\dots S_{n-1}^{m_{n-1}}P^n h) \\&=S_1^{m_1}\dots
S_i^{m_i+1}\dots S_{n-1}^{m_{n-1}}P^n h\\& =P_{\mathcal
H}(T_1^{m_1}\dots T_i^{m_i+1}T_{n-1}^{m_{n-1}}V^n h)\\&
=P_{\mathcal H}T_i(T_1^{m_1}\dots T_{n-1}^{m_{n-1}}V^n h).
\end{align*}
Thus we have $S_iP_{\mathcal H}=P_{\mathcal H}T_i$ for each $i$
and similarly one can prove that $PP_{\mathcal H}=P_{\mathcal
H}V$. Also for $h\in\mathcal H$ and $k\in\mathcal K$ we have that
\[
\langle S_i^*h,k \rangle =\langle P_{\mathcal H}S_i^*h,k \rangle
=\langle S_i^*h,P_{\mathcal H}k \rangle =\langle h,S_iP_{\mathcal
H}k \rangle =\langle h,P_{\mathcal H}T_i \rangle =\langle T_i^*h,k
\rangle .
\]
Hence $S_i^*=T_i^*|_{\mathcal H}$ and similarly
$P^*=V^*|_{\mathcal H}$. The converse part is obvious.

\end{proof}

\noindent We now present our dilation theorem.

\begin{thm} \label{isometric-dilation}
Let $(S_1,\dots,S_{n-1}P)$ be a $\Gamma_n$-contraction on a Hilbert space $\mathcal{H}$ and let $(A_1,\dots,A_{n-1})$ and $(B_1,\dots , B_{n-1})$ be the $\ft$-tuple of $(S_1, \dots , S_{n-1},P)$ and $(S_1^*, \dots , S_{n-1}^*,P^*)$ respectively. Let
$\mathcal{N}=\mathcal{H}\oplus\mathcal{D}_{p}\oplus\mathcal{D}_{p}\oplus\mathcal{D}_{p}\oplus\dots=\mathcal{H}\oplus
l^2(\mathcal{D}_{p})=\mathcal H \oplus l^2(\mathcal D_P)$ and consider the operators $T_1,\dots, T_{n-1},V$ defined on $\mathcal{N}$ by
\begin{align*}
T_i & = \begin{bmatrix} S&0&0&0&\dots\\
A_{n-i}^*{D}_{p}&A_i&0&0&\dots\\
0&A_{n-i}^*&A_i&0&\dots\\
0&0&A_{n-i}^*&A_i&\dots\\
\dots&\dots&\dots&\dots&\dots\\
\end{bmatrix}\,, \quad i=1,\dots,n-1 \quad \text{ and} \\
V &= \begin{bmatrix} P&0&0&0&\dots\\
{D}_{p}&0&0&0&\dots\\
0&I&0&0&\dots\\
0&0&I&0&\dots\\
\dots&\dots&\dots&\dots&\dots\\
\end{bmatrix}.
\end{align*}
If
\[
\left(
\frac{n-1}{n}(A_1+A_{n-1}^*z),\frac{n-2}{n}(A_2+A_{n-2}^*z),\dots,
\dfrac{1}{n}(A_{n-1}+A_1^*z) \right)
\]
and
\[
\left(
\frac{n-1}{n}(B_{1}^*+B_{n-1}z),\frac{n-2}{n}(B_{2}^*+B_{n-2}z),\dots,
\dfrac{1}{n}(B_{n-1}^*+B_{1}z) \right)
\] are $\Gamma_{n-1}$-contractions for every $z\in \mathbb D$, then
\begin{enumerate}
\item $(T_1,\dots,T_{n-1},V)$ is a minimal $\Gamma_n$-isometric dilation of $(S_1,\dots,S_{n-1}P)$.
\item If $(\widehat{T}_1,\dots,\widehat{T}_{n-1},V)$ on $\mathcal{N}$ is a $\Gamma$-isometric dilation of $(S_1,\dots,S_{n-1}P)$, then
$\widehat{T}_i=T_i$ for $i=1,\dots ,n-1$.
\item If $(\overline{T}_1,\dots, \overline{T}_{n-1},\overline{V})$ is a $\Gamma_n$-isometric dilation of $(S_1,\dots,S_{n-1}P)$ where
$\overline{V}$ is a minimal isometric dilation of $P$, then $(\overline{T}_1,\dots, \overline{T}_{n-1},\overline{V})$ is unitarily equivalent to $(T_1,\dots, T_{n-1},V)$.
\end{enumerate}
Thus $(2)$ and $(3)$ guarantee the uniqueness of the
$\Gamma_n$-isometric dilation $(T_1,\dots, T_{n-1},V)$ of $(S_1,\dots,S_{n-1},P)$ when $V$ is the minimal
isometric dilation of $P$ on $\mathcal N$.
\end{thm}
 
\begin{proof}
 
If the $\ft$-tuples of the given $\Gamma_n$-contraction and its adjoint satisfy the given conditions, then by Theorem \ref{main-dilation-theorem}, $(R_1,\dots, R_{n-1},U)$ on $\mathcal K$ (as in Theorem \ref{main-dilation-theorem}) is a minimal $\Gamma_n$-unitary dilation of $(S_1,\dots, S_{n-1},P)$. It is evident that
$\mathcal N=\mathcal H\oplus l^2({\mathcal
 D_P})= H\oplus \mathcal
D_P\oplus\mathcal D_P\oplus\cdots$ is a joint invariant subspace
of $R_1,\dots, R_{n-1}, U$ and that $R_i|_{\mathcal N}=T_i$ for $i=1,\dots, n-1$ and $U|_{\mathcal N}=V$. Therefore, by definition $(T_1,\dots, T_{n-1},V)$ is a
$\Gamma_n$-isometry. It is obvious from the matrices of $T_1\dots, ,T_{n-1}, V$ that the restriction of the $\Gamma_n$-co-isometry $(T_1^*,\dots, T_{n-1}^*,V^*)$ to the joint invariant subspace $\mathcal H$ of $T_1^*,\dots, T_{n-1}^*, V^*$ is the $\Gamma_n$-contraction $(S_1^*,\dots, S_{n-1}^*,P^*)$. Therefore by Proposition
\ref{dilation-extension}, $(T_1,\dots, T_{n-1},V)$ is a $\Gamma_n$-isometric dilation of $(S_1,\dots, S_{n-1},P)$. The minimality of this $\Gamma_n$-isometric dilation follows from the fact that $\mathcal N$ and $V$ are respectively the minimal isometric dilation space and minimal isometric dilation of $P$. This completes the proof of part-(1) of the theorem. The proofs of part-(2) and part-(3) are similar to the corresponding parts of Theorem \ref{main-dilation-theorem} and we skip them. Hence the proof is
complete.
\end{proof}

The following result is a corollary of the previous theorem and it provides a set of sufficient conditions under which a commuting operator tuple $(S_1, \dots , S_{n-1},P)$ becomes a $\G$-contraction.

\begin{thm}\label{sufficient1} Let $S_1,\dots, S_{n-1},P$ be commuting
operators on a Hilbert space $\mathcal H$ with $\|S_i\|\leq {n \choose i}$ for each $i$ and $\|P\|\leq 1$. If there are operators $A_1,\dots, A_{n-1}$ in $\mathcal B(\mathcal D_P)$ and $B_1,\dots, B_{n-1}$ in $\mathcal B(\mathcal D_{P^*})$ with $\omega(A_i+ A_{n-i}z)\leq {n \choose i}$ and $\omega(B_i+B_{n-i}z)\leq {n \choose i}$ respectively for $i=1,\dots, n-1$ and for all $z\in {\mathbb D}$ such that
\begin{enumerate}

\item $\sigma (S_1,\dots, S_{n-1},P) \subseteq \Gamma_n$ ;

\item $S_i-S_{n-i}^*P=D_PA_iD_P$ for $i=1,\dots, n-1$ ;

\item $\left(
\dfrac{n-1}{n}(A_1+A_{n-1}^*z),\dfrac{n-2}{n}(A_2+A_{n-2}^*z),\dots,
\dfrac{1}{n}(A_{n-1}+A_1^*z) \right)$
and
$$
\left(
\frac{n-1}{n}(B_{1}^*+B_{n-1}z),\frac{n-2}{n}(B_{2}^*+B_{n-2}z),\dots,
\dfrac{1}{n}(B_{n-1}^*+B_{1}z) \right)
$$
\end{enumerate}
are
$\Gamma_{n-1}$-contraction for every $z\in \mathbb T$,
then $(S_1,\dots, S_{n-1},P)$ is a $\Gamma_n$-contraction.

\end{thm}

\begin{proof}
If $S_1,\dots, S_{n-1},P$ and $A_i,B_i$ satisfy the given conditions, then we can construct operators $R_1,\dots, R_{n-1},U$ on $\mathcal K$ as in Theorem \ref{main-dilation-theorem} such that $(R_1,\dots, R_{n-1},U)$ is a $\Gamma_n$-unitary dilation of $(S_1,\dots, S_{n-1},P)$. Consequently taking restriction of $(R_1,\dots, R_{n-1},U)$ to the joint invariant subspace $\mathcal N$ we obtain $(T_1,\dots, T_{n-1},V)$ as in Theorem \ref{isometric-dilation}, which is a $\Gamma_n$-isometric dilation of $(S_1,\dots, S_{n-1},P)$. Also, the block matrices of $T_1,\dots, T_{n-1},V$ shows that $(S_1^*,\dots, S_{n-1}^*,P^*)$ is the restriction of the $\Gamma_n$-contraction $(T_1^*,\dots, T_{n-1}^*,V^*)$ to the joint invariant subspace $\mathcal H$ of $T_1^*,\dots, T_{n-1}^*,V^*$. Thus, $(S_1^*,\dots, S_{n-1}^*,P^*)$ and hence $(S_1,\dots, S_{n-1},P)$ is a $\Gamma_n$-contraction and the proof is complete.

\end{proof}

\subsection{An operator model for a class of
$\Gamma_n$-contractions}\label{functional-model}

We start with an elementary useful result whose proof is a routine exercise.
\begin{prop}\label{easyprop1}
If $T$ is a contraction and $V$ is its minimal isometric dilation
then $T^*$ and $V^*$ have defect spaces of same dimension.
\end{prop}

We achieve the following operator model as a consequence of the minimal $\G$-isometric dilation as in Theorem \ref{isometric-dilation}. 

\begin{thm}\label{model2}
 Let $(S_1,\dots ,S_{n-1},P)$ be a $\Gamma_n$-contraction acting on $\mathcal H$ and let $(A_1,\dots,A_{n-1})$ and $(B_{1},\dots,B_{{n-1}})$ be the $\ft$-tuples of $(S_1,\dots,S_{n-1},P)$ and $(S_1^*,\dots ,S_{n-1}^*,P^*)$ respectively. Suppose
 \[
 \left(
\frac{n-1}{n}(A_1+A_{n-1}^*z),\frac{n-2}{n}(A_2+A_{n-2}^*z),\dots,
\dfrac{1}{n}(A_{n-1}+A_1^*z) \right)
\]
and
\[
\left(
\frac{n-1}{n}(B_{1}^*+B_{n-1}z),\frac{n-2}{n}(B_{2}^*+B_{n-2}z),\dots,
\dfrac{1}{n}(B_{n-1}^*+B_{1}z) \right)
\]
are $\Gamma_{n-1}$-contraction for every $z\in \mathbb T$. Let $(\widehat T_1,\dots , \widehat T_{n-1},\widehat V)$ on $\mathcal N_*=\mathcal H\oplus \mathcal D_{P^*}\oplus\mathcal D_{P^*}\oplus
 \hdots$ be given by
 \[
 \widehat T_i=\begin{bmatrix}
 S_1 & D_{P^*}{B_{n-i}} & 0 & 0 & \cdots \\
 0 & {B_{i}}^* & {B_{n-i}} & 0 & \cdots \\
 0 & 0 & {B_{i}}^* & {B_{n-i}} & \cdots \\
 0 & 0 & 0 & {B_{i}}^* & \cdots \\
 \vdots & \vdots & \vdots & \vdots & \ddots
 \end{bmatrix}
 \;,\;
 \widehat V=\begin{bmatrix}
P & D_{P^*} & 0 & 0 & \cdots\\
0 & 0 & I & 0 & \cdots\\
0 & 0 & 0 & I & \cdots \\
0 & 0 & 0 & 0 & \cdots\\
\vdots & \vdots & \vdots & \vdots & \ddots
\end{bmatrix}\,.
\]
Then
 \begin{enumerate}
 \item $(\widehat T_1,\dots ,\widehat T_{n-1},\widehat V)$ is a $\Gamma_n$-co-isometry for which $\mathcal H$ is a common invariant subspace and $(\widehat T_1|_{\HS},\dots ,\widehat T_{n-1}|_{\HS},\widehat V|_{\HS})=(S_1, \dots , S_{n-1},P)$;
 
 \item there is an orthogonal decomposition $\mathcal N_*=\mathcal N_1\oplus \mathcal N_2$ into reducing subspaces for each $\widehat T_i$ and $\widehat V$ such that the operator tuple $(\widehat T_1|_{\mathcal N_1},\dots ,\widehat T_{n-1}|_{\mathcal N_1},\widehat V|_{\mathcal N_1})$ is a $\Gamma_n$-unitary and the counter part $(\widehat T_1|_{\mathcal N_2}, \dots , \widehat T_{n-1}|_{\mathcal N_2},\widehat V|_{\mathcal N_2})$ is a  pure $\Gamma_n$-co-isometry;
 
 \item $\mathcal N_2$ can be identified with $H^2(\mathcal D_{\widehat V})$, where $D_{\widehat V}$ has same dimension as of $\mathcal D_P$. The operators $\widehat T_i|_{\mathcal N_2}$, $i=1,\dots ,n-1$ and $\widehat V|_{\mathcal N_2}$ are respectively unitarily equivalent to $T_{A_i+A_{n-i}^*\bar z}$ and $T_{\bar
 z}$ defined on $H^2(\mathcal D_{\widehat V})$.
 \end{enumerate}
 \end{thm}
 \begin{proof}
 Since the $\ft$-tuples $(A_1,\dots, A_{n-1})$ and $({B_1},\dots ,{F_{n-1}})$ satisfy the given conditions, by Theorem \ref{isometric-dilation}, we have that $({\widehat T_1}^*,\dots ,{\widehat T_{n-1}}^*,{\widehat V}^*)$
 is minimal $\Gamma_n$-isometric dilation of $(S_1^*,\dots, S_{n-1}^*,P^*)$,
 where ${\hat V}^*$ is the minimal isometric dilation of $P^*$.
 Therefore, it follows from Proposition \ref{dilation-extension} that $(\widehat T_1,\dots , \hat T_{n-1},\hat V)$ is $\Gamma_n$-co-isometric extension of $(S_1,\dots,S_{n-1},P)$. So we have that $\mathcal H$ is a
 common invariant subspace of $\widehat T_i$, $i=1,\dots, n-1$ and $\widehat V$ and that
\[
\widehat T_i|_{\mathcal H}=S_i\,,i=1,\dots, n-1 \quad \& \quad \hat V|_{\mathcal H}=P.
\]
Again since $({\widehat T_1}^*,\dots,{\widehat T_{n-1}}^*,\hat V^*)$ is a $\Gamma_n$-isometry, by Wold decomposition (see Theorem \ref{G-isometry}, part-(4)), there is an orthogonal decomposition $\mathcal N_*=\mathcal N_1\oplus\mathcal N_2$ into
 reducing subspaces of $\widehat T_i$, $i=1,\dots,n-1$ and $\widehat V$ such that
 $(\widehat T_i|_{\mathcal N_i},\dots,\widehat T_{n-1}|_{\mathcal N_1},\widehat V|_{\mathcal N_1})$ is a
 $\Gamma_n$-unitary and $(\widehat T_1|_{\mathcal N_2},\dots,\widehat T_{n-1}|_{\mathcal N_2},\widehat V|_{\mathcal N_2})$
 is a pure $\Gamma_n$-co-isometry. If we denote
 $(\widehat T_1|_{\mathcal N_1},\dots,\hat T_{n-1}|_{\mathcal N_1},\hat V|_{\mathcal N_1})
 = (T_{11},\dots,T_{1\;n-1},V_1)$ and $(\widehat T_1|_{\mathcal N_2},\dots, \widehat T_{n-1}|_{\mathcal N_2},\hat V|_{\mathcal N_2})
 = (T_{21},\dots ,T_{2\;n-1},V_2)$ then with respect to the orthogonal decomposition $\mathcal K_*=\mathcal K_1\oplus \mathcal K_2$
 we have
 \[
 \widehat T_1=
 \begin{bmatrix}
 T_{11} & 0 \\
 0 & T_{21}
 \end{bmatrix}\;, \dots,\;
 \hat T_{n-1}=
 \begin{bmatrix}
 T_{1\;n-1} & 0 \\
 0 & T_{2\; n-1}
 \end{bmatrix}\;
 \text{ and }
 \hat V=
 \begin{bmatrix}
 V_1 & 0 \\
 0 & V_2
 \end{bmatrix}.
 \]
The fundamental equations
\[
\widehat T_i-{\widehat T_{n-1}}^*\widehat V =D_{\widehat V}X_iD_{\widehat V}\,, \quad i=1,\dots, n-1
\]
of $(\widehat T_{1},\dots,\widehat T_{n-1},\hat V)$ clearly become
 \[
 \begin{bmatrix}
 T_{1i}-T_{1\;n-i}^*V_1 & 0 \\
 0 & T_{2i}-T_{2\;n-i}^*V_2
 \end{bmatrix}
 =\begin{bmatrix}
 0 & 0 \\
 0 & D_{V_2}X_{1\;n-i}D_{V_2}
 \end{bmatrix},\;\;
 X=\begin{bmatrix} X_{11}\\ \vdots \\ X_{1\;n-1}\end{bmatrix}.
 \]
 Since $\mathcal D_{\widehat V}=\mathcal D_{V_2}$, $(\widehat T_1,\dots,\hat T_{n-1},\widehat V)$ and $(T_{21},\dots,T_{2\;{n-1}},V_2)$ have same $\ft$-tuples. Now we apply Theorem \ref{model1} to $(T_{21}^*,\dots,T_{2\;n-1}^*,V_2^*)$, which is a pure $\Gamma_n$-isometry, and get the following:
 \begin{enumerate}
 \item $\mathcal N_2$ can be identified with $H^2(\mathcal D_{V_2})=H^2(\mathcal
 D_{\widehat V})$;
 \item The operators $T_{21}^*, \dots, T_{2\;n-1}^*$ and $V_2^*$ are unitarily equivalent to the Toeplitz operators $T_{A_1^*+A_{n-1} z}, \dots, T_{A_{n-1}^*+A_1 z}$ and $T_z$ respectively acting on $H^2(\mathcal D_{\widehat V})$.
 \end{enumerate}
We explain the validity of part-(2) above. Since $\widehat V^*$ is the minimal
 isometric dilation of $P^*$ by Proposition \ref{easyprop1}, $\mathcal D_{\widehat V}$ and $\mathcal D_P$ have same dimension. An elementary calculation guarantees that $(A_1,\dots, A_{n-1})$ is the $\ft$-tuple of $(\widehat T_1,\dots, \widehat T_{n-1}, \widehat V)$.
 Therefore, $(\widehat T_1|_{\mathcal N_2},\dots,\widehat T_{n-1}|_{\mathcal N_2},\widehat V|_{\mathcal
 N_2})$ is unitarily equivalent to $(T_{A_1+A_{n-1}^*\bar z},\dots, T_{A_{n-1}+A_1^*\bar z},T_{\bar
 z})$ defined on $H^2(\mathcal D_{\widehat V})$. The proof is now complete.
 
\end{proof}

\vspace{0.4cm}

\section{Interplay between rational dilation and distinguished varieties in the symmetrized polydisc }

\vspace{0.4cm}

\noindent Recall that a distinguished variety in $\gn$ is a set of the form $W=W'\cap \gn$, where $W'$ is a complex affine variety in $\C^n$ such that $W'\cap \partial \G \subseteq b\G$. In this Section, we shall show how the existence of a $\G$-unitary or $\G$-isometric dilation of a $\G$-contraction $(S_1, \dots , S_{n-1},P)$ on the minimal dilation space of $P$ guarantees the existence of distinguished varieties in $\gn$ determined by the $\ft$-tuples and vice versa when the defect space of $P$ or $P^*$ is finite dimensional. As a consequence of this interplay (between rational dilation and distinguished varieties), we obtain a new characterization for the distinguished varieties in $\gn$. We begin with the following result which represents an irreducible distinguished variety in $\gn$ in terms of Taylor joint spectrum of $n-1$ commuting matrix pencils. We borrow this result from an earlier work of the author \cite{spal2}.

\begin{thm}  \label{thm:DVchar-1}
Let
\begin{equation}\label{eq:W1}
\Lambda = \{ (s_1,\dots,s_{n-1},p)\in \mathbb G_n \,: \, (s_1,\dots,s_{n-1}) \in \sigma_T(F_1^*+pF_{n-1}\,,\,
F_2^*+pF_{n-2}\,,\,\dots\,, F_{n-1}^*+pF_1) \},
\end{equation}
where $F_1,\dots,F_{n-1}$ are complex square matrices of same order satisfying the following conditions:
\begin{itemize}
\item[(i)] $[F_i,F_j]=0$ and $[F_i^*,F_{n-j}]=[F_j^*,F_{n-i}]$, for $1\leq i<j\leq
n-1$ ; \item[(ii)] for every $z\in \D$ the Taylot joint spectrum of $(F_1^*+zF_{n-1}, F_2^*+zF_{n-2}, \dots , F_{n-1}^*+zF_1, zI)$ is contained in $\gn$ ;
\item[(iii)] the polynomials $\{ f_1,\dots ,f_{n-1} \}$, where $f_i=\det\,(F_i^*+pF_{n-i}-s_iI)$ for each $i$, form a regular sequence ;
\item[(iv)] the complex algebraic set generated by the polynomials $S=\{ f_1,\dots, f_{n-1} \}$ is irreducible.
\end{itemize}
Then, $\Lambda$ is a distinguished variety in $\mathbb G_n$. Furthermore, $\Lambda$ is a part of an affine algebraic curve which is a set-theoretic complete intersection.

Conversely, every distinguished variety $\Lambda$ in $\mathbb G_n$ is a part of an affine algebraic curve lying in $\gn$ which is a complete intersection and has representation as in $($\ref{eq:W1}$)$, where $F_1,\dots, F_{n-1}$ are complex square matrices of same order satisfying the above conditions ${(i)-(iv)}$.

\end{thm}
Let us recall from Section 1 that a set of $n-1$ complex square matrices of same order $\{ F_1, \dots , F_{n-1} \}$ is said to define a distinguished variety $W=W'\cap \gn$ in the symmetrized polydisc, where $W'$ is an affine variety in $\C^n$, if
\[
W = \{ (s_1,\dots,s_{n-1},p)\in \mathbb G_n \,: \, (s_1,\dots,s_{n-1}) \in \sigma_T(F_1^*+pF_{n-1}\,,\,
F_2^*+pF_{n-2}\,,\,\dots\,, F_{n-1}^*+pF_1) \},
\]
so that $\{ \det \, (F_i^*+z_n F_{n-i}-z_iI)\,:\,1 \leq i \leq n-1 \}$ becomes a set of generators for $W'$.

First we deal with dilation of a pure $\G$-contraction $(S_1, \dots , S_{n-1},P)$, i.e. a $\G$-contraction with $P$ being a pure contraction. The purity of $P$ or $P^*$ leads to a lot of interesting outcomes as we have witnessed in \cite{sourav6, spal2}. Here we achieve our results along with the bonus that the dilation of a pure $\G$-contraction on the minimal dilation space of $P$ is equivalent to its dilation on a distinguished variety determined by the $\ft$-tuple of $(S_1^*, \dots , S_{n-1}^*, P^*)$. The mystery will be unfolded after a couple of dilation theoretic results.

\begin{thm} [\cite{spal2}, Theorem 5.9]   \label{thm:VN}

Let $\Sigma=(S_1,\dots,S_{n-1},P)$ be a $\Gamma_n$-contraction with $\ft$-tuple $(F_1,\dots,F_{n-1})$ such that $P^*$ is pure and $\dim \mathcal D_{P} < \infty $. If $(F_1,\dots,F_{n-1})$ defines a distinguished variety $\Lambda_{\Sigma}$ in $\gn$, then both $(S_1, \dots , S_{n-1},P)$ and $(S_1^*,\dots,S_{n-1}^*,P^*)$ admit normal $\partial \ov{\Lambda}_{\Sigma}-$dilation, where $\partial \ov{\Lambda}_{\Sigma}= \ov{\Lambda}_{\Sigma} \setminus \Lambda_{\Sigma}\;(=b\Gamma_n \cap \ov{\Lambda}_{\Sigma}) $. Moreover, the dilation of $(S_1^*, \dots ,S_{n-1}^*,P^*)$ is minimal and acts on the minimal unitary dilation space of $P^*$.

\end{thm}

\begin{thm} \label{thm:pure-dilation}
Let $(S_1, \dots , S_{n-1},P)$ on $\HS$ be a pure $\G$-contraction and let $(B_1, \dots , B_{n-1})$ be the $\ft$-tuple of $(S_1^*, \dots , S_{n-1}^*,P^*)$. Then the following are equivalent.
\begin{enumerate}
\item $(S_1, \dots , S_{n-1},P)$ dilates to a $\G$-unitary $(R_1, \dots , R_{n-1},U)$ on $L^2(\mathcal D_{P^*})$ with $U$ being the minimal unitary dilation of $P$.

\item $\left(
\frac{n-1}{n}(B_1^*+B_{n-1}z),\frac{n-2}{n}(B_2^*+B_{n-2}z),\dots,
\frac{1}{n}(B_{n-1}^*+B_1z) \right)
$
is a $\Gamma_{n-1}$-contraction for every $z\in \T$.

\end{enumerate}

\end{thm}

\begin{proof}

\textbf{(1) $\Rightarrow$ (2)} Since $P$ is a pure contraction, it follows from Nagy-Foias theory (see \cite{nagy}) that upto a unitary $M_z$ on $L^2(\mathcal D_{P^*})$ is the minimal unitary dilation of $P$. Thus, it follows from Theorem \ref{main-dilation-theorem} that
\[
\left(
\frac{n-1}{n}(B_1^*+B_{n-1}z),\frac{n-2}{n}(B_2^*+B_{n-2}z),\dots,
\frac{1}{n}(B_{n-1}^*+B_1z) \right)
\]
is a $\Gamma_{n-1}$-contraction for every $z\in \T$.\\

\noindent \textbf{(2) $\Rightarrow$ (1)} Suppose (2) holds. Then by Theorem 4.3 in \cite{sourav6}, $(T_{B_1^*+B_{n-1}z}, \dots , T_{B_{n-1}^*+B_{1}z},T_z)$ on $H^2(\mathcal D_{P^*})$ is a $\G$-isometric dilation of $(S_1, \dots , S_{n-1},P)$. It is obvious that the $\G$-isometry $(T_{B_1^*+B_{n-1}z}, \dots , T_{B_{n-1}^*+B_{1}z},T_z)$ extends to the $\G$-unitary $(M_{B_1^*+B_{n-1}z}, \dots , M_{B_{n-1}^*+B_{1}z},M_z)$ on $L^2(\mathcal D_{P^*})$. Therefore, $(M_{B_1^*+B_{n-1}z}, \dots , M_{B_{n-1}^*+B_{1}z},M_z)$ is a $\G$-unitary dilation of $(S_1, \dots , S_{n-1}, P)$.

\end{proof}

The following result provides our first desired interplay between dilation and distinguished varieties under the purity assumption.

\begin{thm} \label{thm:dilation-variety1}

Let $(S_1, \dots , S_{n-1},P)$ be a pure $\Gamma_n$-contraction on $\HS$ with $(B_1, \dots , B_{n-1})$ being the $\ft$-tuple of $(S_1^*, \dots , S_{n-1}^*,P^*)$. If $\mathcal D_{P^*}$ is finite dimensional then the following are equivalent.

\begin{enumerate}

\item For every $z\in \D$, $\sigma_T(B_1^*+zB_{n-1}, B_2^*+zB_{n-2}, \dots , B_{n-1}^*+zB_1, zI)\subseteq \gn$ and $(S_1, \dots , S_{n-1},P)$ possesses a $\Gamma_n$-unitary dilation on $L^2(\mathcal D_{P^*})$.\\

\item For every $z\in \D$, $\sigma_T(B_1^*+zB_{n-1}, B_2^*+zB_{n-2}, \dots , B_{n-1}^*+zB_1, zI)\subseteq \gn$ and the tuple
$
\left(
\frac{n-1}{n}(B_1^*+B_{n-1}z),\frac{n-2}{n}(B_2^*+B_{n-2}z),\dots,
\frac{1}{n}(B_{n-1}^*+B_1z) \right)
$
is a $\Gamma_{n-1}$-contraction for every $z\in \T$.\\

\item The set of polynomials $\{f_i=\det\, (B_i^*+pB_{n-i}-s_iI):\, 1 \leq i \leq n-1 \}$ defines a distinguished variety $\Lambda$ in $\gn$.\\

\item For every $z\in \D$, $\sigma_T(B_1^*+zB_{n-1}, B_2^*+zB_{n-2}, \dots , B_{n-1}^*+zB_1, zI)\subseteq \gn$ and $(S_1, \dots , S_{n-1},P)$ possesses a normal $\partial \ov{\Lambda}-$dilation on $L^2(\mathcal D_{P^*})$, where $\Lambda = \mathcal Z(f_1, \dots , f_{n-1})\cap \gn$ and $\partial \ov{\Lambda}_{\Sigma}=b\Gamma_n \cap \ov{\Lambda}_{\Sigma} $.

\end{enumerate}

\end{thm}

\begin{proof}

\textbf{(1) $\Rightarrow$ (2)} This follows from Theorem \ref{thm:pure-dilation}.\\

\noindent \textbf{(2) $\Rightarrow$ (3)} Suppose
$
\left(
\frac{n-1}{n}(B_1^*+B_{n-1}z),\frac{n-2}{n}(B_2^*+B_{n-2}z),\dots,
\frac{1}{n}(B_{n-1}^*+B_1z) \right)
$
is a $\Gamma_{n-1}$-contraction for every $z\in \T$.  Thus, by the commutativity of the above tuple we have that $[B_i,B_j]=0$ and $[B_i^*, B_{n-j}]=[B_j^*, B_{n-i}]$. Since $\mathcal D_{P^*}$ is finite dimensional, $B_1, \dots , B_{n-1}$ are commuting matrices. It can be easily verified that $B_1^*+B_{n-1}z,\, B_2^*+B_{n-2}z, \dots , B_{n-1}^*+B_1z$ are commuting normal operators for every $z\in \T$. Since $\sigma_T(B_1^*+B_{n-1}z, \dots , B_{n-1}^*+B_1z,zI)\subset \mathbb G_n$ for every $z\in \D$, it follows that the variety $V$ generated by the polynomials $\{f_i=\det\, (B_i^*+pB_{n-i}-s_iI):\, 1 \leq i \leq n-1 \}$ intersects $\gn$. This variety $V$ cannot contain a point $(s_1, \dots , s_{n-1},p)\in \partial \Gamma_n \setminus b\Gamma_n$. Indeed, for any point $(s_1, \dots , s_{n-1},p)\in \partial \Gamma_n \setminus b\Gamma_n$, we must have $p\in \D$ (see Theorem \ref{thm:DB}) and by hypothesis we have that $\sigma_T(B_1^*+B_{n-1}z, \dots , B_{n-1}^*+B_1z,zI)\subset \mathbb G_n$ for every $z\in \D$. Thus, $V$ exits through $b\Gamma_n$ and consequently
\[
\Lambda = \{ (s_1,\dots,s_{n-1},p)\in \mathbb G_n \,:
\; (s_1,\dots,s_{n-1}) \in \sigma_T(B_1^*+pB_{n-1}\,,\,
B_2^*+pB_{n-2}\,,\,\dots\,, B_{n-1}^*+pB_1) \}
\]
is a distinguished variety in $\gn$ defined by the polynomials $\{f_1, \dots ,f_{n-1} \}$. This distinguished variety may or may not be irreducible.\\

\noindent \textbf{(3) $\Rightarrow$ (4)} If the set of polynomials $\{f_i=\det\, (B_i^*+pB_{n-i}-s_iI):\, 1 \leq i \leq n-1 \}$ defines a distinguished variety $\Lambda$ in $\gn$, then it follows from Theorem \ref{thm:DVchar-1} that $\sigma_T(B_1^*+B_{n-1}z, \dots , B_{n-1}^*+B_1z,zI)\subset \mathbb G_n$ for every $z\in \D$. The rest part follows from Theorem \ref{thm:VN}.\\

\noindent \textbf{(4) $\Rightarrow$ (1)} Note that a normal $\partial \ov{\Lambda}-$dilation is a $\G$-unitary dilation, because, $\partial \ov{\Lambda} \subseteq b\G$. Thus, (4) $\Rightarrow$ (1) follows trivially.

\end{proof}

The following result is a next step to Theorem \ref{thm:dilation-variety1} in the sense that here we remove the purity condition on $P$ and present a more generalized interplay between rational dilation and distinguished varieties.

\begin{thm} \label{thm:dilation-variety2}

Let $(S_1, \dots , S_{n-1},P)$ be a $\Gamma_n$-contraction on $\HS$ with $\mathcal D_P$ and $\mathcal D_{P^*} $ being finite dimensional. Suppose $A=(A_1, \dots , A_{n-1})$ and $B=(B_1, \dots , B_{n-1})$ are the $\ft$-tuples of $(S_1, \dots , S_{n-1}, P)$ and $(S_1^*, \dots , S_{n-1}^*, P^*)$ respectively. Let us denote $A(z)=(A_1+A_{n-1}^*z, \dots , A_{n-1}+A_1^*z,zI)$ and $B(z)=\sigma_T(B_1^*+B_{n-1}z, \dots , B_{n-1}^*+B_1z,zI)$. Then the following are equivalent.

\begin{enumerate}

\item $\sigma_T(A(z)), \sigma_T(B(z)) \subseteq \mathbb G_{n}$ and $(S_1, \dots , S_{n-1},P)$ admits $\Gamma_n$-unitary dilation on $l^2(\mathcal D_{P^*})\oplus \HS \oplus l^2(\mathcal D_P)$.\\

\item $\sigma_T(A(z)), \sigma_T(B(z)) \subseteq \mathbb G_{n}$ and both
$
\left(
\frac{n-1}{n}(B_1^*+B_{n-1}z),\frac{n-2}{n}(B_2^*+B_{n-2}z),\dots,
\frac{1}{n}(B_{n-1}^*+B_1z) \right)
$
and
$
\left(
\frac{n-1}{n}(A_1+A_{n-1}^*z),\frac{n-2}{n}(A_2+A_{n-2}^*z),\dots,
\frac{1}{n}(A_{n-1}+A_1^*z) \right)
$
are $\Gamma_{n-1}$-contractions for every $z\in \T$.\\

\item Each of the sets of polynomials $\{g_i=\det\, (A_i+pA_{n-i}^*-s_iI):\, 1 \leq i \leq n-1 \}$ and $\{f_i=\det\, (B_i^*+pB_{n-i}-s_iI):\, 1 \leq i \leq n-1 \}$ defines a distinguished variety in the symmetrized polydisc.

\end{enumerate}

\end{thm}

\begin{proof}

Evidently \textbf{(1) $\Leftrightarrow$ (2)} follows from Theorem \ref{main-dilation-theorem} and \textbf{(2) $\Leftrightarrow$ (3)} follows from Theorem \ref{thm:dilation-variety1}.

\end{proof}

We conclude this Section with the following new characterization for the distinguished varieties in the symmetrized polydisc.

\begin{thm} \label{thm:DVchar-4}

Let $F_1, \dots , F_{n-1}$ be complex matrices of same order say $d$. Then the following are equivalent.
\begin{enumerate}
\item The set of polynomials $\{\det\,(F_i^*+pF_{n-i}-s_iI):\, 1 \leq i \leq n-1  \}$ defines a distinguished variety in $\gn$.

\item $
\Upsilon(z)=\left(
\frac{n-1}{n}(F_1^*+F_{n-1}z),\frac{n-2}{n}(F_2^*+F_{n-2}z),\dots,
\frac{1}{n}(F_{n-1}^*+F_1z) \right)
$
is a $\Gamma_{n-1}$-contraction for every $z\in \T$ and $\sigma_T(F_1^*+F_{n-1}z, \dots , F_{n-1}^*+F_1z,zI) \subseteq \gn$ for every $z\in \D$.

\end{enumerate}

\end{thm}

\begin{proof}

\textbf{(1) $\Rightarrow$ (2)} If set of polynomials $\{\det\,(F_i^*+pF_{n-i}-s_iI):\, 1 \leq i \leq n-1  \}$ defines a distinguished variety in $\gn$, then by Theorem \ref{thm:DVchar-1} we have that $\sigma_T(F_1^*+F_{n-1}z, \dots , F_{n-1}^*+F_1z,zI)\subset \mathbb G_n$ for every $z\in \D$. Also, it is evident that $(F_1^*+F_{n-1}z, \dots , F_{n-1}^*+F_1z,zI)$ is a commuting normal tuple for any $z\in \T$ and it follows from the hypothesis that $\sigma_T(F_1^*+F_{n-1}z, \dots , F_{n-1}^*+F_1z,zI) \subseteq b\G$ whenever $z$ is of unit modulus. Therefore, $(F_1^*+F_{n-1}z, \dots , F_{n-1}^*+F_1z,zI)$ is a $\G$-unitary and it follows from Lemma \ref{lem:BSR2} that
\[
\Upsilon(z) = \left(
\frac{n-1}{n}(F_1^*+F_{n-1}z),\frac{n-2}{n}(F_2^*+F_{n-2}z),\dots,
\frac{1}{n}(F_{n-1}^*+F_1z) \right)
\]
is a $\Gamma_{n-1}$-contraction for every $z\in \T$.\\

\noindent \textbf{(2) $\Rightarrow$ (1)} Suppose (2) holds. Since $\Upsilon(z)$ is a $\Gamma_{n-1}$-contraction for every $z\in \T$, it follows from Theorem \ref{model1} that $(T_{\phi_1}^*, \dots , T_{\phi_{n-1}}^*,T_z^*)$ is a $\G$-co-isometry on $H^2(\C^d)$, where $\phi_i(z)=F_i^*+F_{n-i}z$ for each $i$. A simple calculation (see Remark \ref{partial-converse}) shows that $(F_1, \dots , F_{n-1})$ is the $\ft$-tuple of $(T_{\phi_1}^*, \dots , T_{\phi_{n-1}}^*,T_z^*)$. Thus, $(F_1, \dots , F_{n-1})$ is the $\ft$-tuple of a $\G$-contraction $(T_{\phi_1}^*, \dots , T_{\phi_{n-1}}^*,T_z^*)$ whose adjoint, $(T_{\phi_1}, \dots , T_{\phi_{n-1}},T_z)$ is a pure $\G$-contraction, in particular a pure $\G$-isometry with $\mathcal D_{T_z^*}(=\C^{d^2})$ being finite dimensional. Also, $\sigma_T(F_1^*+F_{n-1}z, \dots , F_{n-1}^*+F_1z,zI) \subseteq \gn$ for every $z\in \D$. Therefore, it follows from Theorem \ref{thm:dilation-variety1} that the set of polynomials $\{\det\,(F_i+pF_{n-i}^*-s_iI):\, 1 \leq i \leq n-1  \}$ defines a distinguished variety $\Lambda$ in $\gn$. Needless to mention that
\[
\Lambda = \{ (s_1,\dots,s_{n-1},p)\in \mathbb G_n \,:
\; (s_1,\dots,s_{n-1}) \in \sigma_T(F_1+pF_{n-1^*}\,,\,
F_2+pF_{n-2^*}\,,\,\dots\,, F_{n-1}+pF_1^*) \}.
\]
The proof is now complete.

\end{proof}


\section{Appendix}

\vspace{0.4cm}

\noindent \textbf{\textit{Proof of Lemma} \ref{funda-properties}.}
\textbf{(1).} It suffices if we show $PA_i={B_i}^*P|_{\mathcal
D_{P}}$ because proofs to the other identities are similar. For
$D_{P}h \in \mathcal D_{P}$ and $D_{P^*}h^{\prime}\in \mathcal
D_{P^*}$, we have by virtue of (\ref{funda-repeat1}),
\begin{align*}
\langle PA_iD_{P}h,D_{P^*}h^{\prime} \rangle =\langle
D_{P^*}PA_iD_{P}h,h^{\prime} \rangle =\langle
PD_{P}A_iD_{P}h,h^{\prime}\rangle &=\langle
P(S_i-S_{n-i}^*P)h,h^{\prime} \rangle \\& =\langle
(S_i-PS_{n-i}^*)Ph,h^{\prime} \rangle \\&=\langle
D_{P^*}{B_i}^*D_{P^*}Ph,h^{\prime} \rangle \\&=\langle
{B_i}^*PD_{P}h,D_{P^*}h^{\prime} \rangle.
\end{align*}
Since $A_i$ is defined on $\mathcal D_P$, the desired identity is
obtained.\\

\noindent \textbf{(2).}
$D_{P}(A_iD_{P}+A_{n-i}^*D_{P}P)=(S_i-S_{n-i}^*P)+(S_{n-i}^*-P^*S_i)P=D_{P}^2S_i$,
by (\ref{funda-repeat}). Therefore,
$D_{P}S_i=A_iD_{P}+A_{n-i}^*D_{P}P$ because both LHS and RHS are
defined from $\mathcal H$ to $\mathcal D_{P}$.\\

\noindent \textbf{(3).} For $h \in \mathcal{H}$, we have
\begin{align*}
(S_iD_P-D_{P^*}B_{n-i}P)D_Ph
&= S_i(I-P^*P)h-(D_{P^*}B_{n-i}D_{P^*})Ph
\\
&= S_ih-S_iP^*Ph-(S_{n-i}^*-S_iP^*)Ph
\\
&= S_ih-S_iP^*Ph-S_{n-i}^*Ph+S_iP^*Ph
\\
&= (S_i-{S_{n-i}}^*P)h=(D_P A_i)D_Ph.
\end{align*}
The proof for the other identity is similar.\\

\noindent \textbf{(4).}
$(D_{P^*}{B_i}^*+PD_{P^*}{B_{n-i}})D_{P^*}=(S_i-PS_{n-i}^*)+P(S_{n-i}^*-S_iP^*)=S_iD_{P^*}^2$,
by (\ref{funda-repeat1}). Therefore,
$S_iD_{P^*}=D_{P^*}{B_i}^*+PD_{P^*}{B_{n-i}}$.\\

\noindent \textbf{(5).} We have that
\begin{align*}
S_i^*S_j & = S_i^*(S_{n-j}^*P+D_PA_jD_P) \quad [\text{by }
(\ref{funda-repeat})] \\
& = S_{n-j}^*S_i^*P+S_i^*D_PA_jD_P \\
&= S_{n-j}^*(S_{n-i}-D_PA_{n-i}D_P)+(D_PA_i^*+P^*D_PA_{n-i})A_jD_P
\\ & \quad \quad \quad \quad \quad \quad \quad [\text{by }
(\ref{funda-repeat}) \text{ and part-}(2) ] \\
& = S_{n-j}^*S_{n-i}-(D_PA_{n-j}^*+P^*D_PA_j)A_{n-i}D_P \\
& \quad +D_PA_i^*A_jD_P+P^*D_PA_{n-i}A_jD_P \quad [\text{by
part-}(2)] \\
& = S_{n-j}^*S_{n-i} +D_{P}(A_i^*A_j-A_{n-j}^*A_{n-i})D_{P}, \quad
[\text{ since } [A_j,A_{n-i}]=0 ].
\end{align*}
Therefore, $S_i^*S_j - S_{n-j}^*S_{n-i} =
D_{P}(A_i^*A_j-A_{n-j}^*A_{n-i})D_{P} $.\\

\textbf{(5).} We apply similar techniques as in the previous part.
Now
\begin{align*}
S_iS_j^* & = S_i(S_{n-j}P^*+D_{P^*}B_{j}D_{P^*}) \quad [\text{by }
(\ref{funda-repeat1})] \\
& = S_{n-j}S_iP^*+S_iD_{P^*}B_{j}D_{P^*} \\
&=
S_{n-j}(S_{n-i}^*-D_{P^*}B_{n-i}D_{P^*})+(D_{P^*}B_{i}^*+PD_{P^*}B_{n-i})B_{j}D_{P^*}
\\ & \quad \quad \quad \quad \quad \quad \quad [\text{by }
(\ref{funda-repeat1}) \text{ and part-}(3) ] \\
& = S_{n-j}S_{n-i}^*-(D_{P^*}B_{n-j}^*+PD_{P^*}B_{j})B_{n-i}D_{P^*} \\
& \quad +(D_{P^*}B_{i}^*+PD_{P^*}B_{n-i})B_{j}D_{P^*} \quad
[\text{by
part-}(3)] \\
& = S_{n-j}S_{n-i}^*
+D_{P^*}(B_{i}^*B_{j}-B_{n-j}^*B_{n-i})D_{P^*}, \quad [\text{
since } [B_{j},B_{n-i}]=0 ].
\end{align*}
Therefore, $S_iS_j^*-S_{n-j}S_{n-i}^* =
D_{P^*}(B_{i}^*B_{j}-B_{n-j}^*B_{n-i})D_{P^*}$.

\qed

\vspace{0.2cm}

\noindent \textbf{\textit{Proof of Equations} (\ref{eqn:New01}).}\\

\noindent $(a_1).$ We apply part-(2) of Lemma \ref{funda-properties} to obtain
\begin{align*}
A_iA_{n-j}^*D_{P}+A_{n-i}^*D_{P}S_j & =A_iA_{n-j}^*D_{P}+
A_{n-i}^*(A_jD_P+A_{n-j}^*D_PP) \\& =(A_iA_{n-j}^*+A_{n-i}^*A_j)D_{P}+A_{n-i}^*A_{n-j}^*D_{P}P.
\end{align*}
Similarly
$A_jA_{n-i}^*D_{P}+A_{n-j}^*D_{P}S_i=(A_jA_{n-i}^*+A_{n-j}^*A_i)D_{P}+A_{n-j}^*A_{n-i}^*D_{P}P$.
So, the desired equality follows from the relations (\ref{rel:01}).\\

\noindent $(a_2).$ By part-(3) of Lemma \ref{funda-properties} we obtain
$
S_iD_{P^*}{B_{n-j}} =(D_{P^*}{B_i}^*+PD_{P^*}{B_{n-i}}){B_{n-j}}.
$
Similarly we have
$
S_jD_{P^*}{B_{n-i}} =(D_{P^*}{B_j}^*+PD_{P^*}{B_{n-j}}){B_{n-i}}.
$
So we have
\begin{align*}
S_iD_{P^*}{B_{n-j}} - S_jD_{P^*}{B_{n-i}} & = D_{P^*}
({B_i}^*{B_{n-j}}- {B_j}^*{B_{n-i}})
\\ & \quad \quad -PD_{P^*}({B_{n-j}}{B_{n-i}}-{B_{n-i}}{B_{n-j}})
\\&
= D_{P^*} ({B_i}^*{B_{n-j}}- {B_j}^*{B_{n-i}}) \quad
[\text{ by part-(1) of } (\ref{rel:01}) ] \\
& =D_{P^*} ({B_{n-j}}{B_i}^*- {B_{n-i}}{B_j}^*) \quad [\text{ by
part-(2) of } (\ref{rel:01})].
\end{align*}
The desired identity follows from here.\\

\noindent $(a_3).$  Let us denote
\begin{align*}
X_1 & = -A_iA_{n-j}^*P^*+A_{n-i}^*D_PD_{P^*}B_{n-j}-A_{n-i}^*P^*B_{j}^* \,, \\
X_2 & =
-A_jA_{n-i}^*P^*+A_{n-j}^*D_PD_{P^*}B_{n-i}-A_{n-j}^*P^*B_{i}^*\,.
 \end{align*}
 Then,
\begin{align*}
&D_{P}(X_2-X_1)D_{P^*} \\& =
D_{P}(A_iA_{n-j}^*-A_jA_{n-i}^*)P^*D_{P^*} +
D_{P}({A_{n-i}^*}P^*{B_j}^* - A_{n-j}^*P^*{B_i}^*)D_{P^*}
\\& \quad +
D_{P}(A_{n-j}^*D_PD_{P^*}B_{n-i}-A_{n-i}^*D_PD_{P^*}B_{n-j})D_{P^*}
\\& = D_{P}(A_iA_{n-j}^*-A_jA_{n-i}^*)P^*D_{P^*} + D_{P}({A_{n-i}^*}P^*{B_j}^* - A_{n-j}^*P^*{B_i}^*)D_{P^*}\\&
\quad  + (S_{n-j}^*-P^*S_j)(S_{n-i}^*-S_iP^*) \\& \quad
-(S_{n-i}^* -P^*S_i)(S_{n-j}^*-S_jP^*) ,\; [\text{ by }
(\ref{funda-repeat}), (\ref{funda-repeat1})] \\& =
D_{P}(A_iA_{n-j}^*-A_jA_{n-i}^*)D_{P}P^* + D_P(P^*B_{n-i}B_{j}^* -
P^*B_{n-j}B_{i}^*)D_{P^*} \\& \quad +
(S_{n-i}^*S_j-S_{n-j}^*S_i)P^*+P^*(S_iS_{n-j}^*-S_jS_{n-i}^*),\;
[\text{Lemma } \ref{funda-properties}, \text{part-(1)}]
\\& = D_{P}(A_iA_{n-j}^*-A_jA_{n-i}^*)D_{P}P^* + P^*D_{P^*}(B_{n-i}B_{j}^* - B_{{n-j}}B_{i}^*)D_{P^*} \\&
\quad +\{
(S_{n-i}^*S_j-S_{n-j}^*S_i)P^*+P^*(S_iS_{n-j}^*-S_jS_{n-i}^*)
\}\\& =D_{P}(A_iA_{n-j}^*-A_jA_{n-i}^*)D_{P}P^* +
P^*D_{P^*}(B_{n-i}B_{j}^* - B_{n-j}B_{i}^*)D_{P^*} \\& \quad + \{
D_P(A_{n-i}^*A_j-A_{n-j}^*A_i)D_PP^*+P^*D_{P^*}(B_{n-j}B_{i}^*-B_{n-i}B_{j}^*)D_{P^*}
\}.
\end{align*}
The last equality follows from part-(4) and part-(5) of Lemma
\ref{funda-properties}. Again from part-(2) of (\ref{rel:01}), we
have that
\begin{itemize}
\item[(i)]
$A_{n-i}^*A_j-A_{n-j}^*A_i=-(A_iA_{n-j}^*-A_jA_{n-i}^*)$
\item[(ii)] $B_{n-j}B_{i}^*-B_{n-i}B_{j}^*= - (B_{n-i}B_{j}^* -
B_{n-j}B_{i}^*). $
\end{itemize}
Therefore, $D_{P}(X_2-X_1)D_{P^*} =0$ and thus $X_1=X_2$.

\qed

\noindent \textbf{\textit{Proof of Equation} (\ref{eqn:New02}).}\\

For $d_0 \in \mathcal D_{P^*}$ we have the following:
\begingroup
\begin{align*}
& \quad \tilde{D}_i^*(d_0,0,0,\dots)
\\
&=
(D^*\tilde{C}_i^*C+C^*V_i^{'*}C)(d_0,d_1,\dots)
\\
&=
D^*\tilde{C}_i^* \left( D_{P^*}d_0 \oplus (-P^*d_0,0,\dots) \right)+C^*V_i^{'*} \left( D_{P^*}d_0 \oplus (-P^*d_0,0,\dots) \right)
\\
&=
D^*((B_{n-i}^*(I-PP^*)+P A_{n-i}P^*)d_0,0,0,\dots) \\
& \quad \quad + \; C^* \left( (S_{n-i}^*D_{P^*}-D_P A_{n-i}P^*)d_0 \oplus (-A_{i}^*P^*d_0,,\dots) \right)
\\
&=
(0,(B_{n-i}^*(I-PP^*)+P A_{n-i}P^*)d_0,,\dots) \\
&  \quad \quad  + \; ((D_{P^*}S_{n-i}^*D_{P^*}-D_{P^*}D_P A_{n-i}P^*+P A_i^*P^*)d_0,0,\dots)
\\
&=
( (D_{P^*}(S_{n-i}^*D_{P^*}-D_P A_{n-i}P^*)+P A_i^*P^*)d_0,
\\& \quad \quad \quad \quad  (B_{n-i}^*(I-PP^*)+P A_{n-i}P^*)d_0,0,\dots )
\\
&=
\left( ((I-PP^*)B_i+P A_i^*P^*)d_0,(B_{n-i}^*(I-PP^*)+P A_{n-i}P^*)d_0,0,\dots \right) 
\\& \qquad \qquad  \qquad \qquad  \qquad \qquad \qquad \qquad \qquad  \quad [\text{ by part-(3) of Lemma \ref{funda-properties}}]
\\
&=
(B_id_0,B_{n-i}^*d_0,0,\dots) \qquad \qquad \qquad \qquad \quad [\text{ by part-(1) of Lemma \ref{funda-properties}}]
\end{align*}
\endgroup

\qed

\noindent \textbf{\textit{Proof of Equation (\ref{eqn:New03}).}}\\
For $(c_0,c_1,c_2,\dots)$ and $(d_0,d_1,d_2,\dots)$ in $l^2(\mathcal{D}_{P^*})$, we have
\begin{eqnarray*}
&&\langle(c_0,c_1,c_2,\dots),\tilde{D}_i^*(d_0,d_1,d_2,\dots) \rangle
\\
&=& \langle (c_0,c_1,c_2,\dots), (B_id_0,B_{n-i}^*d_0+B_id_1,B_{n-i}^*d_1+B_{i}d_2,\dots) \rangle
\\
&=&
\langle c_0,B_i d_0 \rangle + \langle c_1,B_{n-i}^*d_0+B_i d_1 \rangle + \langle c_2,B_{n-i}^*d_1+B_i d_2 \rangle + \cdots
\\
&=&
\langle B_i^*c_0+B_{n-i}c_1,d_0 \rangle + \langle B_i^*c_1+B_{n-i}c_2,d_1 \rangle + \langle B_i^*c_2+B_{n-i}c_3, d_2 \rangle + \cdots
\\
&=&
\langle (B_i^*c_0+B_{n-i}c_1,B_i^*c_1+B_{n-i}c_2,B_i^*c_2+B_{n-i}c_3,\dots), (d_0,d_1,d_2,\dots) \rangle.
\end{eqnarray*}

\qed

\vspace{0.4cm}

\noindent \textbf{Acknowledgement.} The author is thankful to Nitin Tomar for pointing out an inaccuracy in the proof of a lemma in the previous version of this article.

\vspace{0.4cm}

\end{document}